\newtheorem{theorem}{Theorem}[section]
\newtheorem{proposition}[theorem]{Proposition}
\newtheorem{lemma}[theorem]{Lemma}
\def\bull{\vrule height .9ex width .8ex depth -.1ex}
\newenvironment{proof}{\smallbreak \noindent {\bf Proof.~}}
              {\unskip\nobreak\hfill\hskip 2em \bull\par\medbreak}
\newenvironment{proofof}[1]{\medbreak\noindent{\bf Proof of~#1.~}}
              {\unskip\nobreak\hfill\hskip 2em \bull\par\medbreak}
\def\cA{\mathcal{A}}
\def\cB{\mathcal{B}}
\def\cC{\mathcal{C}}
\def\cG{\mathcal{G}}
\def\cT{\mathcal{T}}
\def\cU{\mathcal{U}}
\def\bN{\mathbb{N}}
\def\bZ{\mathbb{Z}}
\def\fB{\mathfrak{B}}
\def\fU{\mathfrak{U}}
\def\fW{\mathfrak{W}}
\def\al{\alpha}
\def\be{\beta}
\def\ga{\gamma}
\def\Ga{\Gamma}
\def\de{\delta}
\def\De{\Delta}
\def\om{\omega}
\def\Om{\Omega}
\def\eps{\epsilon}
\def\MG{\mathcal{MG}}
\def\oB{\overline{B}}
\def\St{\mathop{\mathrm{St}}}
\def\Sch{\mathop{\mathrm{Sch}}}
\def\adj{\mathop{\mathrm{adj}}}
\def\coset{\mathop{\mathrm{coset}}}
\def\Sub{\mathop{\mathrm{Sub}}}
\def\Fix{\mathop{\mathrm{Fix}}}
\def\dT{\partial\mathcal{T}}
\def\1{1_{\mathcal{G}}}
\title{Notes on the Schreier graphs\\ of the Grigorchuk group}
\author{Yaroslav Vorobets}
\date{}
\begin{document}

\maketitle

\begin{abstract}
The paper is concerned with the space of the marked Schreier graphs of the
Grigorchuk group and the action of the group on this space.  In particular,
we describe the invariant set of the Schreier graphs corresponding to the
action on the boundary of the binary rooted tree and dynamics of the group
action restricted to this invariant set.
\end{abstract}

\section{Introduction}\label{intro}

This paper is devoted to the study of two equivalent dynamical systems of
the Grigorchuk group $\cG$, the action on the space of the marked Schreier
graphs and the action on the space of subgroups.  The main object of study
is going to be the set of the marked Schreier graphs of the standard action
of the group on the boundary of the binary rooted tree and their limit
points in the space of all marked Schreier graphs of $\cG$.

Given a finitely generated group $G$ with a fixed generating set $S$, to
each action of $G$ we associate its Schreier graph, which is a
combinatorial object that encodes some information about orbits of the
action.  The marked Schreier graphs of various actions form a topological
space $\Sch(G,S)$ and there is a natural action of $G$ on this space.  Any
action of $G$ corresponds to an invariant set in $\Sch(G,S)$ and any action
with an invariant measure gives rise to an invariant measure on
$\Sch(G,S)$.  The latter allows to define the notion of a random Schreier
graph, which is closely related to the notion of a random subgroup of $G$.

A principal problem is to determine how much information about the original
action can be learned from the Schreier graphs.  The worst case here is a
free action, for which nothing beyond its freeness can be recovered.
Vershik \cite{V} introduced the notion of a totally nonfree action.  This
is an action such that all points have distinct stabilizers.  In this case
the information about the original action can be recovered almost
completely.  Further extensive development of these ideas was done by
Grigorchuk \cite{G2}.

The Grigorchuk group was introduced in \cite{G0} as a simple example of a
finitely generated infinite torsion group.  Later it was revealed that this
group has intermediate growth and a number of other remarkable properties
(see the survey \cite{G1}).  In this paper we are going to use the
branching property of the Grigorchuk group, which implies that its action
on the boundary of the binary rooted tree is totally nonfree in a very
strong sense.

\begin{figure}[t]
\centerline{\includegraphics{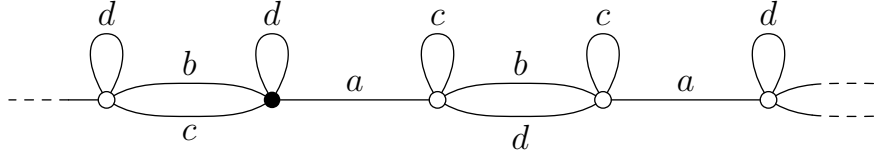}}
\caption{The marked Schreier graph of $0^\infty=000\dots$}
\label{fig1}
\end{figure}

The main results of the paper are summarized in the following two theorems.
The first theorem contains a detailed description of the invariant set of
the Schreier graphs.  The second theorem is concerned with the dynamics of
the group action restricted to that invariant set.

\begin{theorem}\label{main1}
Let $F:\dT\to\Sch(\cG,\{a,b,c,d\})$ be the mapping that assigns to any
point on the boundary of the binary rooted tree the marked Schreier graph
of its orbit under the action of the Grigorchuk group.  Then
\begin{itemize}
\item[(i)]
$F$ is injective;
\item[(ii)]
$F$ is measurable; it is continuous everywhere except for a countable set,
the orbit of the point $\xi_0=111\dots$;
\item[(iii)]
the Schreier graph $F(\xi_0)$ is an isolated point in the closure of
$F(\dT)$; the other isolated points are graphs obtained by varying the
marked vertex of $F(\xi_0)$;
\item[(iv)]
the closure of the set $F(\dT)$ differs from $F(\dT)$ in countably many
points; these are obtained from three graphs $\De_0,\De_1,\De_2$ choosing
the marked vertex arbitrarily;
\item[(v)] as an unmarked graph, $F(\xi_0)$ is a double quotient of each
$\De_i$ ($i=0,1,2$); also, there exists a graph $\De$ such that each
$\De_i$ is a double quotient of $\De$.
\end{itemize}
\end{theorem}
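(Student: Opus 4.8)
The plan is to reduce everything to an explicit combinatorial description of the orbital graphs, obtained directly from the self-similar formulas $a(\sigma w)=\bar\sigma w$, $b=(a,c)$, $c=(a,d)$, $d=(1,b)$. The first step is to record the local picture of $F(\xi)$ at a vertex $v=g\xi$: the element $a$ contributes a single $a$-edge (never a loop, as $a$ flips the first letter), while the Klein four-group $\{1,b,c,d\}$ has an orbit of size $1$ or $2$ through $v$. A direct peeling computation shows the orbit has size $1$ --- three loops labelled $b,c,d$ --- exactly when $v$ is the all-ones point $1^\infty$, and size $2$ otherwise, in which case one of $b,c,d$ is a loop and the other two form a double edge. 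Consequently every $F(\xi)$ is a line; it is one-ended precisely when the orbit $\cG\xi$ contains $1^\infty$, i.e. when $\xi\in\cG\xi_0$, and two-ended otherwise. This dichotomy, together with the period-$3$ substitution rule that governs which of $b,c,d$ is the loop along the line (read off from the $1$-blocks of $\xi$ via the cycle $d\to b\to c$), is the engine for all five parts.

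For (i) I would use that $F(\xi)$, as a marked labelled graph, is the Schreier coset graph of $\St_\cG(\xi)$ based at the trivial coset, so that $F(\xi)\cong F(\eta)$ forces $\St_\cG(\xi)=\St_\cG(\eta)$ (the loops at the base vertex recover the stabilizer). It then suffices to recover the ray $\xi$ from its stabilizer, which is where the branching property enters: for a tree vertex $v$ the rigid stabilizer $\mathrm{rist}(v)$ is nontrivial, fixes $\xi$ when $v$ is off the ray, and contains an element flipping the first letter of the tail of $\xi$ below $v$ when $v$ lies on the ray; hence $v$ is on the ray iff $\mathrm{rist}(v)\not\subseteq\St_\cG(\xi)$, and the ray, so $\xi$, is determined. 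For (ii), the structural description shows that when $\xi\notin\cG\xi_0$ the isomorphism type of the radius-$R$ ball of $F(\xi)$ is determined by a finite prefix of $\xi$ --- the only mechanism forcing dependence on arbitrarily deep coordinates is an arbitrarily long run of $1$'s, i.e. proximity to $\cG\xi_0$ --- so $F$ is continuous off $\cG\xi_0$ and hence measurable. Discontinuity on $\cG\xi_0$ is then immediate from the local picture: the all-ones vertex carries three loops $b,c,d$, whereas every other vertex carries exactly one such loop, so no sequence outside $\cG\xi_0$ can have balls converging to those of $F(\xi_0)$.

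Parts (iii) and (iv) are read off the same dichotomy. The triple-loop endpoint is already visible in a bounded ball and, for $R$ large, a radius-$R$ ball around it pins down the entire one-ended graph together with the distance of the marked vertex from the endpoint; since no two-ended $F(\eta)$ contains such a vertex, each $F(g\xi_0)$ is separated in $\overline{F(\dT)}$, so $F(\xi_0)$ and exactly its marked-vertex variants --- the graphs of the points of $\cG\xi_0$ --- are isolated, giving (iii). For (iv) I would classify the limits of sequences $F(\xi^{(k)})$: pushing the endpoint to infinity along longer and longer $1$-runs produces bi-infinite graphs, and the period-$3$ substitution rule at the receding junction leaves exactly three possibilities $\De_0,\De_1,\De_2$, indexed by the length of the run modulo $3$; every other convergent sequence already stabilizes to some $F(\eta)$. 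Letting the marked vertex range over the countably many vertices of each $\De_i$ yields precisely the countably many new points of the closure.

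Finally, for (v) I would exhibit the quotient maps explicitly. Reflecting a bi-infinite $\De_i$ across its central junction gives a label-preserving, exactly $2$-to-$1$ graph morphism onto the one-ended ray $F(\xi_0)$, the junction folding onto the triple-loop endpoint, so that $F(\xi_0)$ is a double quotient of each $\De_i$; and I would build one bi-infinite graph $\De$ carrying three order-$2$ foldings onto $\De_0,\De_1,\De_2$, again dictated by the substitution data at the center. The main obstacle is the bookkeeping in (iv) and (v): proving that the receding-junction limits collapse to \emph{exactly} three graphs, and verifying that the reflections and foldings are genuinely well-defined, surjective and $2$-to-$1$ as labelled morphisms. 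Both rest on a careful analysis of the period-$3$ substitution structure of the decorated line rather than on any single clean identity, and this is where I expect the real work to lie.
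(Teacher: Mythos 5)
Your route is genuinely different from the paper's --- the paper factors $F$ through the space of subgroups, writing $F=f\circ\St_\be$ with $f:\Sub(\cG)\to\Sch(\cG,S)$ the homeomorphism of Proposition \ref{sub5}, so that (i) and (ii) become statements about the stabilizer map (Lemmas \ref{sub4}, \ref{grig4}, \ref{grig5}, \ref{grig6}), while you work directly with the decorated lines. Your outlines of (ii), (iii) and the limit analysis in (iv) parallel Lemmas \ref{grig7} and \ref{grig8} and could be completed. However, two of your key mechanisms are broken.

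First, injectivity. Your claim that for $v$ on the ray of $\xi$ the rigid stabilizer $\mathop{\mathrm{rist}}(v)$ ``contains an element flipping the first letter of the tail of $\xi$ below $v$'' is false, already at $v=0$. Let $\phi_a,\nu:\cG\to\bZ/2\bZ$ be the homomorphisms with $\phi_a(a)=1$, $\phi_a(b)=\phi_a(c)=\phi_a(d)=0$ and $\nu(a)=\nu(b)=0$, $\nu(c)=\nu(d)=1$ (for instance, $\phi_a$ is the parity of the permutation induced on level $1$, and $\nu$ is the sum of the parities induced on levels $2$ and $3$). The map $g\mapsto\phi_a(g|_0)+\nu(g|_1)$ is a homomorphism on $\St_\al(0)$ and vanishes on $b=(a,c)$, $c=(a,d)$, $d=(\1,b)$, $aba=(c,a)$, $aca=(d,a)$, $ada=(b,\1)$, which generate $\St_\al(0)$ (Reidemeister--Schreier with transversal $\{\1,a\}$); hence it vanishes identically. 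So any $g$ supported on the subtree at $0$, i.e.\ with $g|_1=\1$, has $\phi_a(g|_0)=0$: its section at $0$ cannot flip the first letter. Your criterion survives only in the weaker form ``$v$ lies on the ray iff $\mathop{\mathrm{rist}}(v)\not\subset\St_\be(\xi)$'', and the forward implication then needs the branch property (that $\mathop{\mathrm{rist}}(v)$ contains a copy of a finite-index subgroup of $\cG$, which can fix no ray) --- a nontrivial theorem you assume without proof. The paper avoids it: Lemma \ref{grig3} manufactures the separating elements from self-replication plus the single computation $d(100)=101$, and Lemma \ref{grig4} yields the stronger statement $\St^o_\be(\xi)\not\subset\St_\be(\eta)$, which is reused in Theorem \ref{main2}. (Also, ``the loops at the base vertex recover the stabilizer'' is not right: loops only detect generators lying in the stabilizer; it is the closed paths at the marked vertex that recover $\St_\be(\xi)$, which is Proposition \ref{sch4}.)

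Second, part (v), and the end of (iv). The graph $\De$ you propose cannot be ``bi-infinite''. For labeled graphs satisfying Proposition \ref{sch1}(ii), a $2$-fold quotient map is a covering with unique path lifting, hence is the quotient of the covering graph by a fixed-point-free deck involution; a fixed-point-free involution of a two-ended line necessarily reverses the line and folds it at an edge, so its quotient is one-ended. Since each $\De_i$ is two-ended, no two-ended graph has any $\De_i$ as a double quotient. The correct object (Lemma \ref{grig10}) is the Schreier coset graph of the neighborhood stabilizer $\St^o_\be(\xi_0)$: it has \emph{four} ends --- four rays attached to a core of four vertices, the cosets of $\St^o_\be(\xi_0)$ in $\St_\be(\xi_0)$, joined pairwise by the $b,c,d$-edges --- its automorphism group is the Klein four-group by Lemmas \ref{grig6} and \ref{grig9}, and the $\De_i$ are its quotients by the three involutions. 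Finally, in (iv) you never rule out that some marking of a $\De_i$ is itself a graph $F(\eta)$; without this, ``precisely the countably many new points'' is unjustified. The paper's Lemma \ref{grig9} settles it by symmetry: each $\De_i$ carries a nontrivial automorphism, whereas no $F(\eta)$ does (a consequence of injectivity), so the two families are disjoint. That argument is available in your framework too, but it has to be made explicitly.
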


\begin{theorem}\label{main2}
Using notation of the previous theorem, let $\Om$ be the set of
non-isolated points of the closure of $F(\dT)$.  Then
\begin{itemize}
\item[(i)]
$\Om$ is a minimal invariant set for the action of the Grigorchuk group
$\cG$ on $\Sch(\cG,\{a,b,c,d\})$;
\item[(ii)]
the action of $\cG$ on $\Om$ is a continuous extension of the action on the
boundary of the binary rooted tree; the extension is one-to-one everywhere
except for a countable set, where it is three-to-one;
\item[(iii)]
there exists a unique Borel probability measure $\nu$ on
$\Sch(\cG,\{a,b,c,d\})$ invariant under the action of $\cG$ and supported
on the set $\Om$;
\item[(iv)]
the action of $\cG$ on $\Om$ with the invariant measure $\nu$ is isomorphic
to the action of $\cG$ on $\dT$ with the uniform measure.
\end{itemize}
\end{theorem}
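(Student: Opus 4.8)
The plan is to build everything on a single equivariant factor map $\pi\colon\Om\to\dT$ and then to transport the known dynamics of $\cG\curvearrowright\dT$ back through it. I first record that $\cG$ acts on $\Sch(\cG,\{a,b,c,d\})$ by homeomorphisms (relocating the marked vertex), so the set of isolated points of the closure of $F(\dT)$ is $\cG$-invariant; by Theorem~\ref{main1}(iii) this set is exactly the orbit of $F(\xi_0)$. Hence its complement $\Om$ in that closure is a closed, $\cG$-invariant subset, which is the easy half of~(i). Writing $X=\dT\setminus\cG\cdot\xi_0$, Theorem~\ref{main1}(ii) says $F$ is continuous on $X$, and Theorem~\ref{main1}(iv) says the closure of $F(\dT)$ differs from $F(\dT)$ only in the marked versions of $\De_0,\De_1,\De_2$; together with~(iii) this identifies $\Om=F(X)\cup\{\text{marked }\De_i\}$ and shows that $F(X)$ is dense in $\Om$, since each $\De_i$ is a limit of images $F(\xi_n)$ with $\xi_n\in X$.

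Next I would construct $\pi$. On $F(X)$ set $\pi=F^{-1}$, and send each marked copy of $\De_i$ to the boundary point $\zeta\in\cG\cdot\xi_0$ at which the approximating sequence $\xi_n\in X$ with $F(\xi_n)\to\De_i$ satisfies $\xi_n\to\zeta$; this is forced to be well defined because the discontinuities of $F$ are located precisely on $\cG\cdot\xi_0$ by Theorem~\ref{main1}(ii). The resulting $\pi$ is continuous (the would-be discontinuities of $F^{-1}$ are exactly the collapses just described), $\cG$-equivariant (both actions move the marking compatibly), and surjective. Its fibers are singletons over $\dT\setminus\cG\cdot\xi_0$, while over each point of the countable set $\cG\cdot\xi_0$ the three graphs $\De_0,\De_1,\De_2$, suitably marked, collapse to one boundary point. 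Verifying that this fiber is exactly three-fold — and not two- or four-fold — is where the fine structure of Theorem~\ref{main1}(iv),(v), namely how the $\De_i$ arise as distinct one-sided limits and how each is a double quotient of a common $\De$, has to be used carefully; this is the main obstacle and yields~(ii).

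For the remaining half of~(i) I would argue through the factor map. Let $Y\subseteq\Om$ be nonempty, closed and $\cG$-invariant; then $\pi(Y)$ is a nonempty closed $\cG$-invariant subset of $\dT$. Since $\cG$ acts transitively on every level $\{0,1\}^n$, each $\cG$-orbit in $\dT$ is dense, so $\cG\curvearrowright\dT$ is minimal and $\pi(Y)=\dT$. Over the co-countable set $\dT\setminus\cG\cdot\xi_0$ the map $\pi$ is injective, so $Y$ contains every one of those singleton fibers, that is $Y\supseteq F(X)$; as $F(X)$ is dense in $\Om$, this forces $Y=\Om$. Hence $\Om$ is minimal, completing~(i).

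Finally, for the measures I would first note that $\cG\curvearrowright\dT$ is uniquely ergodic: any invariant probability measure must assign equal mass to all cylinders of a given length, again by level-transitivity, and hence equals the uniform measure $\lambda$. Given a $\cG$-invariant probability $\nu$ on $\Om$, the push-forward $\pi_*\nu$ is invariant, so $\pi_*\nu=\lambda$. The exceptional set $\cG\cdot\xi_0$ is a countable union of infinite orbits, so it is $\lambda$-null and carries no invariant mass; therefore $\pi$ is a bijection modulo $\nu$- and $\lambda$-null sets. Consequently $\nu$ is the unique pullback of $\lambda$, which gives existence and uniqueness in~(iii), and the same mod-$0$ bijection exhibits the action of $\cG$ on $(\Om,\nu)$ as measurably isomorphic to the action on $(\dT,\lambda)$, proving~(iv).
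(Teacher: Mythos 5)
Your overall architecture coincides with the paper's: both proofs hinge on a continuous equivariant map from $\Om$ onto $\dT$ and then transport minimality, unique ergodicity, and the measure isomorphism through it (your treatment of (i), (iii), (iv) given the factor map is correct and in fact more detailed than the paper's terse conclusion). However, the construction of the factor map itself contains a genuine gap. You define $\pi$ on the adjoined points by sending a marked copy of $\De_i$ to ``the'' limit $\zeta$ of any sequence $\xi_n\in X$ with $F(\xi_n)$ converging to that graph, and you claim this is ``forced to be well defined because the discontinuities of $F$ are located precisely on $\cG\cdot\xi_0$.'' That is a non sequitur: knowing the discontinuity set of $F$ rules out subsequential limits $\zeta$ in $X$ (by continuity plus injectivity of $F$ there), but it does not exclude two sequences $\xi_n\to\zeta$ and $\xi'_n\to\zeta'$ with $\zeta\ne\zeta'$ both in $\cG\cdot\xi_0$ producing the same limit graph, nor does it give continuity of $\pi$ at the adjoined points. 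The substantive input needed here is exactly what the paper supplies via Lemma \ref{grig4} fed into Lemma \ref{tree5} (through Lemma \ref{sub4}(iii)): for distinct $\xi,\eta\in\dT$ the neighborhood stabilizer $\St^o_\be(\xi)$ is not contained in $\St_\be(\eta)$, so a convergent sequence of stabilizers squeezes between $\St^o$ and $\St$ of any subsequential limit and thus pins down a unique limit point; this is what makes the inverse of $\St_\be$ extend continuously to the closure. Alternatively one could redo the explicit ball computation of Lemma \ref{grig8} to identify which marked vertex of which $\De_i$ arises in the limit, but your proposal supplies neither argument.

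Second, part (ii) is not actually proved: you yourself flag the verification that the fibers over $\cG\cdot\xi_0$ are exactly three-fold as ``the main obstacle'' and leave it open, and the tool you point to (the double-quotient statement in Theorem \ref{main1}(v)) is not the one that settles it. The paper's count works differently: over each point $g(\xi_0)$ the extension to the full closure of $F(\dT)$ is four-to-one --- the fiber consists of $F(g(\xi_0))$ together with the three translates $\cA_g(\De^*_0),\cA_g(\De^*_1),\cA_g(\De^*_2)$ --- and discarding the isolated points $F(g(\xi_0))$ leaves exactly three. That these three graphs are pairwise distinct, and that the two markings of $\De_i$ lying over the same boundary point give a single isomorphism class, rests on Lemma \ref{grig9} (each $\De_i$ has exactly one nontrivial automorphism, the flip of the two copies, while graphs $\De_i$ with different retained loop labels are non-isomorphic). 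Once these two gaps are closed, the rest of your argument --- minimality of $Y=\Om$ via density of the singleton fibers, $\pi_*\nu=\lambda$ by unique ergodicity of $\be$, nullity of the countable exceptional set because invariant probability measures vanish on infinite orbits, and the resulting mod-$0$ bijection giving (iii) and (iv) --- goes through as written.
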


The paper is organized as follows.  Section \ref{graph} contains a detailed
construction of the space of marked graphs.  The construction is more
general than that in \cite{G2}.  Section \ref{act} contains notation and
definitions concerning group actions.  In Section \ref{sch} we introduce
the Schreier graphs of a finitely generated group, the space of marked
Schreier graphs, and the action of the group on that space.  In Section
\ref{sub} we study the space of subgroups of a countable group and
establish a relation of this space with the space of marked Schreier
graphs.  Section \ref{tree} is devoted to general considerations concerning
groups of automorphisms of a regular rooted tree and their actions on the
boundary of the tree.  In Section \ref{grig} we apply the results of the
previous sections to the study of the Grigorchuk group and prove Theorems
\ref{main1} and \ref{main2}.  The exposition in Sections
\ref{graph}--\ref{tree} is as general as possible, to make their results
applicable to the actions of groups other than the Grigorchuk group.

The author is grateful to Rostislav Grigorchuk, Anatoly Vershik, and Oleg
Ageev for useful discussions.

\begin{figure}[t]
\centerline{\includegraphics{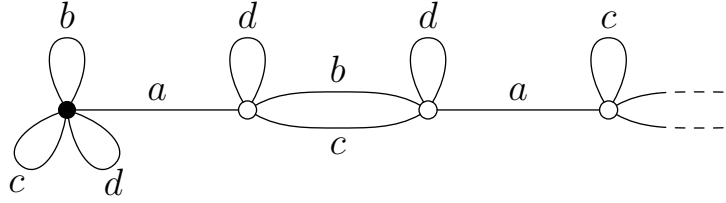}}
\caption{The marked Schreier graph of $1^\infty=111\dots$}
\label{fig2}
\end{figure}

\section{Space of marked graphs}\label{graph}

A {\em graph\/} $\Ga$ is a combinatorial object that consists of {\em
vertices\/} and {\em edges\/} related so that every edge joins two vertices
or a vertex to itself (in the latter case the edge is called a {\em loop}).
The vertices joined by an edge are its {\em endpoints}.  Let $V$ be the
{\em vertex set\/} of the graph $\Ga$ and $E$ be the set of its edges.
Traditionally $E$ is regarded as a subset of $V\times V$, i.e., any edge is
identified with the pair of its endpoints.  In this paper, however, we are
going to consider graphs with multiple edges joining the same vertices.
Also, our graphs will carry additional structure.  To accomodate this, we
regard $E$ merely as a reference set whereas the actual information about
the edges is contained in their {\em attributes\/}, which are functions on
$E$.  In a plain graph any edge has only one attribute: its endpoints,
which are an unordered pair of vertices.  Other types of graphs involve
more attributes.

A {\em directed graph\/} has directed edges.  The endpoints of a {\em
directed edge\/} $e$ are ordered, namely, there is a {\em beginning\/}
$\al(e)\in V$ and an {\em end\/} $\om(e)\in V$.  Clearly, an undirected
loop is no different from a directed one.  An undirected edge joining two
distinct vertices may be regarded as two directed edges $e_1$ and $e_2$
with the same endpoints and opposite directions, i.e., $\al(e_2)=\om(e_1)$
and $\om(e_2)=\al(e_1)$.  This way we can represent any graph with
undirected edges as a directed graph.  Conversely, some directed graphs can
be regarded as graphs with undirected edges (we shall use this in Section
\ref{grig}).

A {\em graph with labeled edges\/} is a graph in which each edge $e$ is
assigned a {\em label\/} $l(e)$.  The labels are elements of a prescribed
finite set.  A {\em marked graph\/} is a graph with a distinguished vertex
called the {\em marked vertex}.

The vertices of a graph are pictured as dots or small circles.  An
undirected edge is pictured as an arc joining its endpoints.  A directed
edge is pictured as an arrow going from its beginning to its end.  The
label of an edge is written next to the edge.  Alternatively, one might
think of labels as colors and picture a graph with labeled edges as a
colored graph.

Let $\Ga$ be a graph and $V$ be its vertex set.  To any subset $V'$ of $V$
we associate a graph $\Ga'$ called a {\em subgraph\/} of $\Ga$.  By
definition, the vertex set of the graph $\Ga'$ is $V'$ and the edges are
those edges of $\Ga$ that have both endpoints in $V'$ (all attributes are
retained).  If $\Ga$ is a marked graph and the marked vertex is in $V'$, it
will also be the marked vertex of the subgraph $\Ga'$.  Otherwise the
subgraph is not marked.

Suppose $\Ga_1$ and $\Ga_2$ are graphs of the same type.  For any
$i\in\{1,2\}$ let $V_i$ be the vertex set of $\Ga_i$ and $E_i$ be the set
of its edges.  The graph $\Ga_1$ is said to be {\em isomorphic\/} to
$\Ga_2$ if there exist bijections $f:V_1\to V_2$ and $\phi:E_1\to E_2$ that
respect the structure of the graphs.  First of all, this means that $f$
sends the endpoints of any edge $e\in E_1$ to the endpoints of $\phi(e)$.
If $\Ga_1$ and $\Ga_2$ are directed graphs, we additionally require that
$\al(\phi(e))=f(\al(e))$ and $\om(\phi(e))=f(\om(e))$ for all $e\in E_1$.
If $\Ga_1$ and $\Ga_2$ have labeled edges, we also require that $\phi$
preserve labels.  If $\Ga_1$ and $\Ga_2$ are marked graphs, we also require
that $f$ map the marked vertex of $\Ga_1$ to the marked vertex of $\Ga_2$.
Assuming the above requirements are met, the mapping $f$ of the vertex set
is called an {\em isomorphism of the graphs\/} $\Ga_1$ and $\Ga_2$.  If
$\Ga_1=\Ga_2$ then $f$ is also called an {\em automorphism\/} of the graph
$\Ga_1$.  We call the mapping $\phi$ a {\em companion mapping\/} of $f$.
If neither of the graphs $\Ga_1$ and $\Ga_2$ admits multiple edges with
identical attributes, the companion mapping is uniquely determined by the
isomorphism $f$.  Further, we say that the graph $\Ga_2$ is a {\em
quotient\/} of $\Ga_1$ if all of the above requirements are met except the
mappings $f$ and $\phi$ need not be injective.  Moreover, $\Ga_2$ is a {\em
$k$-fold quotient\/} of $\Ga_1$ if $f$ is $k$-to-$1$.  Finally, we say that
the graphs $\Ga_1$ and $\Ga_2$ coincide {\em up to renaming edges\/} if
they have the same vertices and there is a one-to-one correspondence
between their edges that preserves all attributes.  An equivalent condition
is that the identity map on the common vertex set is an isomorphism of
these graphs.

A {\em path\/} in a graph $\Ga$ is a sequence of vertices
$v_0,v_1,\dots,v_n$ together with a sequence of edges $e_1,\dots,e_n$ such
that for any $1\le i\le n$ the endpoints of the edge $e_i$ are $v_{i-1}$
and $v_i$.  We say that the vertex $v_0$ is the beginning of the path and
$v_n$ is the end.  The path is {\em closed\/} if $v_n=v_0$.  The {\em
length\/} of the path is the number of edges in the sequence (counted with
repetitions), which is a nonnegative integer.  The path is a {\em directed
path\/} if the edges are directed and, moreover, $\al(e_i)=v_{i-1}$ and
$\om(e_i)=v_i$ for $1\le i\le n$.  If the graph $\Ga$ has labeled edges
then the path is assigned a {\em code word\/} $l(e_1)l(e_2)\dots l(e_n)$,
which is a string of labels read off the edges while traversing the path.

We say that a vertex $v$ of a graph $\Ga$ is {\em connected to\/} a vertex
$v'$ if there is a path in $\Ga$ such that the beginning of the path is $v$
and the end is $v'$.  The length of the shortest path with this property is
the {\em distance\/} from $v$ to $v'$.  The connectivity is an equivalence
relation on the vertex set of $\Ga$.  The subgraphs of $\Ga$ corresponding
to the equivalence classes are {\em connected components\/} of the graph
$\Ga$.  A graph is {\em connected\/} if all vertices are connected to each
other.  Clearly, the connected components of any graph are its maximal
connected subgraphs.

Let $v$ be a vertex of a graph $\Ga$.  For any integer $n\ge0$, the {\em
closed ball\/} of radius $n$ centered at $v$, denoted $\oB_\Ga(v,n)$, is
the subgraph of $\Ga$ whose vertex set consists of all vertices in $\Ga$ at
distance at most $n$ from the vertex $v$.  A graph is {\em locally
finite\/} if every vertex is the endpoint for only finitely many edges.  If
the graph $\Ga$ is locally finite then any closed ball of $\Ga$ is a {\em
finite graph}, i.e., it has a finite number of vertices and a finite number
of edges.

Let $\MG$ denote the set of isomorphism classes of all marked directed
graphs with labeled edges.  For convenience, we regard elements of $\MG$ as
graphs (i.e., we choose representatives of isomorphism classes).  It is
easy to observe that connectedness and local finiteness of graphs are
preserved under isomorphisms.  Let $\MG_0$ denote the subset of $\MG$
consisting of connected, locally finite graphs.  We endow the set $\MG_0$
with a topology as follows.  The topology is generated by sets
$\cU(\Ga_0,V_0)\subset\MG_0$, where $\Ga_0$ runs over all finite graphs in
$\MG_0$ and $V_0$ can be any subset of the vertex set of $\Ga_0$.  By
definition, $\cU(\Ga_0,V_0)$ is the set of all isomorphism classes in
$\MG_0$ containing any graph $\Ga$ such that $\Ga_0$ is a subgraph of $\Ga$
and every edge of $\Ga$ with at least one endpoint in the set $V_0$ is
actually an edge of $\Ga_0$.  In other words, there is no edge in $\Ga$
that joins a vertex from $V_0$ to a vertex outside the vertex set of
$\Ga_0$.  For example, $\cU(\Ga_0,\emptyset)$ is the set of all graphs in
$\MG_0$ that have a subgraph isomorphic to $\Ga_0$.  On the other hand, if
$V_0$ is the entire vertex set of $\Ga_0$ then $\cU(\Ga_0,V_0)$ contains
only the graph $\Ga_0$.  As a consequence, every finite graph in $\MG_0$ is
an isolated point.  The following lemma implies that sets of the form
$\cU(\Ga_0,V_0)$ constitute a base of the topology.

\begin{lemma}\label{graph1}
Any nonempty intersection of two sets of the form $\cU(\Ga_0,V_0)$ can be
represented as the union of some sets of the same form.
\end{lemma}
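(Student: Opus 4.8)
The plan is to prove the lemma by the standard criterion that shows a family of sets is a base for a topology: it suffices to check that for \emph{every} graph $\Ga$ lying in the intersection $\cU(\Ga_0,V_0)\cap\cU(\Ga_1,V_1)$ there is a \emph{single} set of the prescribed form $\cU(\Ga_*,V_*)$ with
$\Ga\in\cU(\Ga_*,V_*)\subseteq\cU(\Ga_0,V_0)\cap\cU(\Ga_1,V_1)$.
Once this is done, the intersection is exactly the union of all these sets as $\Ga$ ranges over it, which is the asserted conclusion. (If the intersection is empty there is nothing to prove, but the hypothesis guarantees it is not.) So first I would fix an arbitrary connected, locally finite representative $\Ga$ of a point of the intersection and reduce the whole lemma to constructing one basic neighbourhood inside the intersection around it.

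Next I would extract the data that membership provides. Being in $\cU(\Ga_j,V_j)$ for $j=0,1$ yields an embedding $\iota_j$ of $\Ga_j$ onto an induced subgraph of $\Ga$ carrying the marked vertex of $\Ga_j$ to the marked vertex $r$ of $\Ga$; write $W_j=\iota_j(V(\Ga_j))$ and $U_j=\iota_j(V_j)$. The defining condition of $\cU(\Ga_j,V_j)$ says precisely that every edge of $\Ga$ incident to a vertex of $U_j$ has both endpoints in $W_j$. I would then take $\Ga_*$ to be the induced subgraph of $\Ga$ on $W_*=W_0\cup W_1$, marked at $r$, and set $V_*=U_0\cup U_1$. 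Because $\iota_0(\Ga_0)$ and $\iota_1(\Ga_1)$ are each connected and both contain $r$, their union on $W_*$ is connected, so $\Ga_*$ is a legitimate finite graph in $\MG_0$; and since the vertices of $V_*$ are saturated in $\Ga$ (all their edges lie in $W_0$ or in $W_1$, hence in $W_*$), one reads off directly that $\Ga\in\cU(\Ga_*,V_*)$.

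The substance of the argument is the inclusion $\cU(\Ga_*,V_*)\subseteq\cU(\Ga_0,V_0)$ (the case of $\Ga_1$ being identical by symmetry). Take $\Ga'\in\cU(\Ga_*,V_*)$ with an embedding $\kappa$ of $\Ga_*$ as an induced subgraph of $\Ga'$. Transitivity of the induced-subgraph relation shows that $\kappa$ restricted to $W_0$ embeds $\Ga_0\cong\Ga[W_0]$ as an induced subgraph of $\Ga'$, so the first membership requirement for $\cU(\Ga_0,V_0)$ holds. For the second requirement I must show that every edge of $\Ga'$ incident to $\kappa(U_0)$ lies in this copy of $\Ga_0$. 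This is where the only real content enters, and it is the step I expect to be the main obstacle: one has to combine two facts of different natures. On one hand, since $U_0\subseteq V_*$, every edge of $\Ga'$ at a vertex of $\kappa(U_0)$ already lies inside $\kappa(\Ga_*)$. On the other hand, inside the finite graph $\Ga_*$ the vertices of $U_0$ have \emph{all} their incident edges within $\Ga[W_0]$ — this follows because in $\Ga$ those vertices were already saturated inside $\Ga_0$, so enlarging the visible region from $\Ga_0$ to $\Ga_*$ exposes no new edges at them — and this is a purely combinatorial property of $\Ga_*$ with its marked subsets, hence transported verbatim by the isomorphism $\kappa$. Chaining the two facts forces every edge of $\Ga'$ at $\kappa(U_0)$ to lie in $\kappa(\Ga[W_0])=\kappa(\Ga_0)$, as needed.

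The delicate point, then, is to keep straight the three graphs $\Ga$, $\Ga_*$, $\Ga'$ and to use saturation at the right moment: closedness of the $V_*$-vertices with respect to the \emph{large} graph $\Ga_*$ must be shown to imply closedness of the $V_0$-vertices with respect to the \emph{small} graph $\Ga_0$, and that implication rests entirely on the original hypothesis that no edge of $\Ga$ leaves $\Ga_0$ at a vertex of $V_0$. The connectivity of $\Ga_*$, required so that $\Ga_*\in\MG_0$ and $\cU(\Ga_*,V_*)$ is a bona fide set of the allowed form, is immediate from $r\in W_0\cap W_1$, so no work is needed there.
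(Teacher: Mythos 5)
Your proposal is correct and follows essentially the same route as the paper: for each graph $\Ga$ in the intersection, form the induced subgraph on the union $W_0\cup W_1$ of the two embedded copies (marked at the common marked vertex) together with the union $U_0\cup U_1$ of the distinguished vertex sets, then verify that the resulting basic set contains $\Ga$ and lies inside the intersection. The only difference is presentational: the paper leaves the final inclusion $\cU(\Ga_*,V_*)\subseteq\cU(\Ga_j,V_j)$ as ``easy to observe,'' whereas you spell out the saturation/transitivity argument in detail.
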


\begin{proof}
Let $\Ga_1,\Ga_2\in\MG_0$ be finite graphs and $V_1,V_2$ be subsets of
their vertex sets.  Consider an arbitrary graph $\Ga\in\cU(\Ga_1,V_1)\cap
\cU(\Ga_2,V_2)$.  For any $i\in\{1,2\}$ let $f_i:W_i\to W'_i$ be an
isomorphism of the graph $\Ga_i$ with a subgraph of $\Ga$ such that no edge
of $\Ga$ joins a vertex from the set $f_i(V_i)$ to a vertex outside $W'_i$.
Denote by $\Ga_0$ the finite subgraph of $\Ga$ with the vertex set
$W_0=W'_1\cup W'_2$.  Since the subgraphs of $\Ga$ with vertex sets $W'_1$
and $W'_2$ are both connected and both contain the marked vertex of $\Ga$,
the subgraph $\Ga_0$ is also marked and connected.  Besides, no edge of
$\Ga$ joins a vertex from the set $V_0=f_1(V_1)\cup f_2(V_2)$ to a vertex
outside $W_0$.  Hence $\Ga\in\cU(\Ga_0,V_0)$.  It is easy to observe that
the entire set $\cU(\Ga_0,V_0)$ is contained in the intersection
$\cU(\Ga_1,V_1)\cap\cU(\Ga_2,V_2)$.  The lemma follows.
\end{proof}

Next we introduce a distance function on $\MG_0$.  Consider arbitrary
graphs $\Ga_1,\Ga_2\in\MG_0$.  Let $v_1$ be the marked vertex of $\Ga_1$
and $v_2$ be the marked vertex of $\Ga_2$.  We let $\de(\Ga_1,\Ga_2)=0$ if
the graphs $\Ga_1$ and $\Ga_2$ are isomorphic (i.e., they represent the
same element of $\MG_0$).  Otherwise we let $\de(\Ga_1,\Ga_2)=2^{-n}$,
where $n$ is the smallest nonnegative integer such that the closed balls
$\oB_{\Ga_1}(v_1,n)$ and $\oB_{\Ga_2}(v_2,n)$ are not isomorphic.

\begin{lemma}\label{graph2}
The graphs $\Ga_1$ and $\Ga_2$ are isomorphic if and only if the closed
balls $\oB_{\Ga_1}(v_1,n)$ and $\oB_{\Ga_2}(v_2,n)$ are isomorphic for any
integer $n\ge0$.
\end{lemma}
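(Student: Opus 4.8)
The ``only if'' direction is immediate. An isomorphism $f\colon\Ga_1\to\Ga_2$, together with its companion mapping, necessarily sends the marked vertex $v_1$ to $v_2$ and preserves the length of every path, hence preserves distances from the marked vertex. Consequently $f$ restricts to a bijection between the vertices at distance at most $n$ from $v_1$ and those at distance at most $n$ from $v_2$, while the companion mapping carries the edges of $\oB_{\Ga_1}(v_1,n)$ to the edges of $\oB_{\Ga_2}(v_2,n)$; thus the restriction is an isomorphism of the two closed balls for every $n\ge0$.

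For the ``if'' direction the plan is a compactness argument (K\"onig's lemma) on a tree of partial isomorphisms. For each $n\ge0$ let $I_n$ denote the set of isomorphisms of the marked graph $\oB_{\Ga_1}(v_1,n)$ onto $\oB_{\Ga_2}(v_2,n)$, where each element is recorded as a \emph{pair} consisting of a vertex bijection and a companion edge bijection; recording the companion mapping explicitly is needed precisely because the graphs may have multiple edges with identical attributes, so it is not determined by the vertex map. Since the graphs are locally finite, every closed ball is a finite graph, so each $I_n$ is a finite set, and by hypothesis each $I_n$ is nonempty. The first key step is to define restriction maps $I_{n+1}\to I_n$, which requires checking that an isomorphism of the $(n{+}1)$-balls carries the $n$-ball onto the $n$-ball. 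This rests on the observation that for a vertex $u$ at distance $j\le n+1$ from $v_1$ a shortest path from $v_1$ to $u$ passes only through vertices at distance $\le j$, hence lies inside $\oB_{\Ga_1}(v_1,n+1)$; therefore distances from $v_1$ measured inside the ball agree with distances in $\Ga_1$ for all vertices at distance at most $n$. As any isomorphism of balls preserves within-ball distances from the marked vertex, it maps the vertices of $\oB_{\Ga_1}(v_1,n)$ exactly onto those of $\oB_{\Ga_2}(v_2,n)$, and the restriction of the pair is a well-defined element of $I_n$.

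The second step is to invoke K\"onig's lemma: the inverse system $\cdots\to I_{n+1}\to I_n\to\cdots\to I_0$ of nonempty finite sets has nonempty inverse limit, so there is a coherent sequence $(f_n,\phi_n)$ with $(f_{n+1},\phi_{n+1})$ restricting to $(f_n,\phi_n)$ for every $n$. The final step is to assemble these into a single isomorphism. Since $\Ga_1$ is connected, each vertex $v$ lies in some ball, and I set $f(v)=f_n(v)$ for any $n$ at least the distance from $v_1$ to $v$; coherence makes this choice independent of $n$. Likewise every edge of $\Ga_1$ has both endpoints in some ball and so is an edge of that ball, which defines $\phi$ from the $\phi_n$. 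Coherence and injectivity of each $f_n$ give injectivity of $f$; connectedness of $\Ga_2$ together with the fact that each $f_n$ maps onto the full $n$-ball gives surjectivity; and the conditions defining an isomorphism (preservation of endpoints with their directions, of labels, and of the marked vertex) are all local, so they are inherited from the $(f_n,\phi_n)$. Hence $(f,\phi)$ is an isomorphism of $\Ga_1$ onto $\Ga_2$.

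I expect the main obstacle to be the well-definedness of the restriction maps, that is, verifying that an isomorphism of larger balls indeed preserves the smaller balls. This is exactly the point where local finiteness and the distance-from-marked-vertex bookkeeping enter, and it is what makes the inverse system, and therefore K\"onig's lemma, applicable; once it is established, the remaining steps are routine.
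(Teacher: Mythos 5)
Your proof is correct and follows essentially the same route as the paper: the paper's extraction of nested infinite index sets $I_0\supset I_1\supset\dots$ with constant restrictions $f_m|_{V_1(n)}$ and $\phi_m|_{E_1(n)}$ is precisely the pigeonhole argument underlying your appeal to K\"onig's lemma for the inverse system of finite nonempty sets of ball isomorphisms. You also match the paper on the two delicate points—tracking companion mappings explicitly because of possible multiple edges with identical attributes, and verifying that an isomorphism of larger balls restricts to one of smaller balls via distance preservation—so the arguments coincide in substance.
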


\begin{proof}
For any $i\in\{1,2\}$ let $V_i$ denote the vertex set of the graph $\Ga_i$
and $E_i$ denote its set of edges.  Further, for any integer $n\ge0$ let
$V_i(n)$ and $E_i(n)$ denote the vertex set and the set of edges of the
closed ball $\oB_{\Ga_i}(v_i,n)$.  First assume that the graph $\Ga_1$ is
isomorphic to $\Ga_2$.  Let $f:V_1\to V_2$ be an isomorphism of these
graphs and $\phi:E_1\to E_2$ be its companion mapping.  Clearly,
$f(v_1)=v_2$.  It is easy to see that any isomorphism of graphs preserves
distances between vertices.  It follows that $f$ maps $V_1(n)$ onto
$V_2(n)$ for any $n\ge0$.  Consequently, $\phi$ maps $E_1(n)$ onto
$E_2(n)$.  Hence the restriction of $f$ to the set $V_1(n)$ is an
isomorphism of the graphs $\oB_{\Ga_1}(v_1,n)$ and $\oB_{\Ga_2}(v_2,n)$.

Now assume that for every integer $n\ge0$ the closed balls
$\oB_{\Ga_1}(v_1,n)$ and $\oB_{\Ga_2}(v_2,n)$ are isomorphic.  Let
$f_n:V_1(n)\to V_2(n)$ be an isomorphism of these graphs and
$\phi_n:E_1(n)\to E_2(n)$ be its companion mapping.  Clearly,
$f_n(v_1)=v_2$.  Note that the closed ball $\oB_{\Ga_i}(v_i,n)$ is also the
closed ball with the same center and radius in any of the graphs
$\oB_{\Ga_i}(v_i,m)$, $m>n$.  It follows that the restriction of the
mapping $f_m$ to the set $V_1(n)$ is an isomorphism of the graphs
$\oB_{\Ga_1}(v_1,n)$ and $\oB_{\Ga_2}(v_2,n)$ while the restriction of
$\phi_m$ to $E_1(n)$ is its companion mapping.  Since the graphs $\Ga_1$
and $\Ga_2$ are locally finite, the sets $V_1(n),V_2(n),E_1(n),E_2(n)$ are
finite.  Hence there are only finitely many distinct restrictions
$f_m|_{V_1(n)}$ or $\phi_m|_{E_1(n)}$ for any fixed $n$.  Therefore one can
find nested infinite sets of indices $I_0\supset I_1\supset
I_2\supset\dots$ such that the restriction $f_m|_{V_1(n)}$ is the same for
all $m\in I_n$ and the restriction $\phi_m|_{E_1(n)}$ is the same for all
$m\in I_n$.  For any integer $n\ge0$ let $f'_n=f_m|_{V_1(n)}$ and
$\phi'_n=\phi_m|_{E_1(n)}$, where $m\in I_n$.  By construction, $f'_n$ is a
restriction of $f'_k$ and $\phi'_n$ is a restriction of $\phi'_k$ whenever
$n<k$.  Hence there exist maps $f:V_1\to V_2$ and $\phi:E_1\to E_2$ such
that all $f'_n$ are restrictions of $f$ and all $\phi'_n$ are restrictions
of $\phi$.  Since the graphs $\Ga_1$ and $\Ga_2$ are connected, any finite
collection of vertices and edges in either graph is contained in a closed
ball centered at the marked vertex.  As for any $n\ge0$ the mapping $f'_n$
is an isomorphism of $\oB_{\Ga_1}(v_1,n)$ and $\oB_{\Ga_2}(v_2,n)$ and
$\phi'_n$ is its companion mapping, it follows that $f$ is an isomorphism
of $\Ga_1$ and $\Ga_2$ and $\phi$ is its companion mapping.
\end{proof}

Lemma \ref{graph2} implies that $\de$ is a well-defined function on
$\MG_0\times\MG_0$.  This is a distance function, which makes $\MG_0$ into
an ultrametric space.

\begin{lemma}\label{graph3}
The distance function $\de$ is compatible with the topology on $\MG_0$.
\end{lemma}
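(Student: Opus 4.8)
The plan is to show that the metric topology induced by $\de$ coincides with the topology generated by the sets $\cU(\Ga_0,V_0)$, by checking that each of the two bases refines the other. Throughout I write $v$ for the marked vertex of a graph $\Ga$, and I use the reformulation, immediate from the definition of $\de$ together with Lemma \ref{graph2}, that the open ball $\{\Ga':\de(\Ga,\Ga')<2^{-n}\}$ consists of exactly those $\Ga'$ whose closed ball $\oB_{\Ga'}(v',n)$ is isomorphic to $\oB_\Ga(v,n)$. Thus it suffices to prove two inclusions of topologies: (II) every $\cU(\Ga_0,V_0)$ is $\de$-open, and (I) every such $\de$-ball is a union of sets of the form $\cU(\Ga_0,V_0)$.

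For (II), fix $\Ga\in\cU(\Ga_0,V_0)$. Since $\Ga_0$ is a finite connected subgraph containing the marked vertex, there is a least integer $R$ with $\Ga_0\subseteq\oB_\Ga(v,R)$; I set $N=R+1$ and claim $\{\Ga':\de(\Ga,\Ga')<2^{-N}\}\subseteq\cU(\Ga_0,V_0)$. Given such a $\Ga'$, let $g$ be an isomorphism $\oB_\Ga(v,N)\to\oB_{\Ga'}(v',N)$; as graph isomorphisms preserve distances, $g$ carries $\Ga_0$ to an isomorphic marked subgraph $g(\Ga_0)$ of $\Ga'$ and $V_0$ to $g(V_0)$, whose vertices lie at distance $\le R$ from $v'$. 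Any edge of $\Ga'$ meeting $g(V_0)$ has both endpoints at distance $\le R+1=N$, so it lies in $\oB_{\Ga'}(v',N)$ and is the $g$-image of an edge of $\oB_\Ga(v,N)$ meeting $V_0$; that preimage belongs to $\Ga_0$ because $\Ga\in\cU(\Ga_0,V_0)$, hence the edge itself belongs to $g(\Ga_0)$. Since $\cU(\Ga_0,V_0)$ depends only on the isomorphism type of the pair $(\Ga_0,V_0)$, this shows $\Ga'\in\cU(\Ga_0,V_0)$, proving $\cU(\Ga_0,V_0)$ is $\de$-open.

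For (I), fix a ball $\{\Ga':\oB_{\Ga'}(v',n)\cong\oB_\Ga(v,n)\}$ and a point $\Ga$ in it; I must produce a basic set containing $\Ga$ and contained in the ball. Here lies the one genuine obstacle: the naive choice $\cU(\oB_\Ga(v,n),W)$ with $W$ the set of vertices at distance $\le n-1$ does \emph{not} work, because it leaves edges joining two vertices at distance exactly $n$ unconstrained, so a member of it can have a strictly larger radius-$n$ ball and escape the $\de$-ball. The remedy is to look one level deeper: I take $\Ga_0=\oB_\Ga(v,n+1)$ and $V_0$ the set of vertices at distance $\le n$. Then $\Ga\in\cU(\Ga_0,V_0)$, since every edge of $\Ga$ touching $V_0$ has both endpoints within distance $n+1$ and so lies in $\Ga_0$. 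The crux is to show that every $\Ga'\in\cU(\Ga_0,V_0)$ satisfies $\oB_{\Ga'}(v',n)=\oB_\Ga(v,n)$. For the vertices, a shortest path from $v'$ of length $\le n$ stays inside $\Ga_0$ by induction, each of its vertices but the last lying in $V_0$ so that the next edge, meeting $V_0$, is forced into $\Ga_0$; conversely a shortest path in $\Ga$ of length $\le n$ already lies in $\oB_\Ga(v,n)\subseteq\Ga_0\subseteq\Ga'$. For the edges, any edge of $\Ga'$ between two vertices at distance $\le n$ meets $V_0$ and hence lies in $\Ga_0$, and being between such vertices it lies in $\oB_\Ga(v,n)$, while the reverse inclusion is immediate. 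Thus $\oB_{\Ga'}(v',n)\cong\oB_\Ga(v,n)$, so $\cU(\Ga_0,V_0)$ is contained in the ball, which is therefore a union of basic sets. Combining (I) and (II) yields the lemma.
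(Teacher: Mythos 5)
Your proof is correct, and its skeleton is the same as the paper's: reformulate the ball $\{\Ga':\de(\Ga,\Ga')<2^{-n}\}$ as the set of graphs whose radius-$n$ balls are isomorphic to $\oB_\Ga(v,n)$, then show each base refines the other. However, the step you flag as ``the one genuine obstacle'' rests on a misreading of the paper's conventions. In this paper a subgraph is by definition an \emph{induced} subgraph (``the edges are those edges of $\Ga$ that have both endpoints in $V'$''), so any $\Ga'\in\cU(\oB_\Ga(v,n),W)$ must contain $\oB_\Ga(v,n)$ as an induced subgraph; an extra edge of $\Ga'$ joining two vertices at distance exactly $n$ would have both endpoints in that subgraph without being one of its edges, which is exactly what inducedness forbids. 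Hence the ``naive choice'' does work, and that is precisely what the paper proves: not just an inclusion, but the equality $\cB(\Ga,2^{-n})=\cU(\oB_\Ga(v,n),W)$ with $W$ the vertices at distance at most $n-1$; the paper then deduces your step (II) for an arbitrary $\cU(\Ga_0,V_0)$ by nesting, via $\cB(\Ga,2^{-n})=\cU(\Ga_1,V_1)\subset\cU(\Ga_0,V_0)$, whereas you prove (II) directly by transporting $\Ga_0$ through an isomorphism of $N$-balls. Your radius-$(n+1)$ remedy is harmless and your argument goes through (indeed it would survive even the non-induced reading of ``subgraph'' you implicitly adopted); the only point to add under the paper's induced reading is that in (II) the image $g(\Ga_0)$ is again an \emph{induced} subgraph of $\Ga'$, which follows because $\Ga_0$ is induced in $\Ga$ and inducedness is preserved by the ball isomorphism and by composition of induced subgraphs. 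So: the mathematics stands, but the criticism of the natural radius-$n$ choice should be withdrawn, and with it your construction simplifies to the paper's.
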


\begin{proof}
The base of the topology on $\MG_0$ consists of the sets $\cU(\Ga_0,V_0)$.
The base of the topology defined by the distance function $\de$ is formed
by open balls $\cB(\Ga_1,\eps)=\{\Ga\in\MG_0\mid\de(\Ga,\Ga_1)<\eps\}$,
where $\Ga_1$ can be any graph in $\MG_0$ and $\eps>0$.  We have to show
that any element of either base is the union of some elements of the other
base.

First consider an open ball $\cB(\Ga_1,\eps)$.  If $\eps>1$ then
$\cB(\Ga_1,\eps)=\MG_0$, which is the union of all sets $\cU(\Ga_0,V_0)$.
Otherwise let $n$ be the largest integer such that $\eps\le2^{-n}$.
Clearly, $\cB(\Ga_1,\eps)=\cB(\Ga_1,2^{-n})$.  Let
$\Ga_0=\oB_{\Ga_1}(v_1,n)$, where $v_1$ is the marked vertex of the graph
$\Ga_1$, and let $V_0$ be the set of all vertices of $\Ga_1$ at distance at
most $n-1$ from $v_1$.  Consider an arbitrary graph $\Ga\in\MG_0$ such that
$\Ga_0$ is a subgraph of $\Ga$.  Clearly, $\Ga_0$ is also a subgraph of the
closed ball $\oB_\Ga(v_1,n)$.  If $v$ is a vertex of $\Ga$ at distance $k$
from the marked vertex $v_1$, then any vertex joined to $v$ by an edge is
at distance at most $k+1$ and at least $k-1$ from $v_1$.  Moreover, if
$k>0$ then $v$ is joined to a vertex at distance exactly $k-1$ from $v_1$.
It follows that $\Ga_0=\oB_\Ga(v_1,n)$ if and only if no vertex from the
set $V_0$ is joined in $\Ga$ to a vertex that is not a vertex of $\Ga_0$.
Thus $\cB(\Ga_1,2^{-n})=\cU(\Ga_0,V_0)$.

Now consider the set $\cU(\Ga_0,V_0)$, where $\Ga_0$ is a finite graph in
$\MG_0$ and $V_0$ is a subset of its vertex set.  Denote by $v_0$ the
marked vertex of $\Ga_0$.  Let $n$ be the smallest integer such that every
vertex of $\Ga_0$ is at distance at most $n$ from $v_0$ and every vertex
from $V_0$ is at distance at most $n-1$ from $v_0$.  Take any graph
$\Ga\in\MG_0$ such that $\Ga_0$ is a subgraph of $\Ga$ and there is no edge
in $\Ga$ joining a vertex from $V_0$ to a vertex outside the vertex set of
$\Ga_0$.  Let $\Ga_1=\oB_\Ga(v_0,n)$ and $V_1$ be the set of all vertices
of $\Ga$ at distance at most $n-1$ from $v_0$.  By the above,
$\cU(\Ga_1,V_1)=\cB(\Ga,2^{-n})$.  At the same time,
$\cU(\Ga_1,V_1)\subset\cU(\Ga_0,V_0)$ since $\Ga_0$ is a subgraph of
$\Ga_1$ and $V_0$ is a subset of $V_1$.  Thus for any graph
$\Ga\in\cU(\Ga_0,V_0)$ the entire open ball $\cB(\Ga,2^{-n})$ is contained
in $\cU(\Ga_0,V_0)$.  In particular, $\cU(\Ga_0,V_0)$ is the union of those
open balls.
\end{proof}

Given a positive integer $N$ and a finite set $L$, let $\MG(N,L)$ denote
the subset of $\MG$ consisting of all graphs in which every vertex is the
endpoint for at most $N$ edges and every label belongs to $L$.  Further,
let $\MG_0(N,L)=\MG(N,L)\cap\MG_0$.

\begin{proposition}\label{graph4}
$\MG_0(N,L)$ is a compact subset of the metric space $\MG_0$.
\end{proposition}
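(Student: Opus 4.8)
The plan is to prove sequential compactness: since $\MG_0$ is a metric space, it suffices to show that every sequence in $\MG_0(N,L)$ admits a subsequence converging to a point of $\MG_0(N,L)$. The engine of the argument is a finiteness observation together with a diagonal extraction, followed by an explicit construction of the limit graph.

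First I would record that for each fixed $n\ge0$ there are only finitely many isomorphism classes of marked directed labeled graphs that can arise as a closed ball $\oB_\Ga(v,n)$ with $\Ga\in\MG_0(N,L)$ and $v$ the marked vertex. Indeed, starting from the marked vertex and following edges, the vertex set of such a ball has at most $1+N+\dots+N^n$ elements, because every vertex is an endpoint of at most $N$ edges and therefore has at most $N$ neighbours; consequently the number of edges is bounded as well. With a bounded number of vertices and edges, finitely many labels available from $L$, and finitely many ways to prescribe beginnings and ends of the directed edges, only finitely many isomorphism types occur. Call this finite set $\mathcal{F}_n$.

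Next, given a sequence $(\Ga^{(k)})_{k\ge1}$ in $\MG_0(N,L)$ with marked vertices $v^{(k)}$, the assignment $k\mapsto[\oB_{\Ga^{(k)}}(v^{(k)},n)]\in\mathcal{F}_n$ takes finitely many values for each $n$. A standard diagonal argument then yields nested infinite index sets $I_0\supset I_1\supset I_2\supset\cdots$ such that the isomorphism class of $\oB_{\Ga^{(k)}}(v^{(k)},n)$ is the same for all $k\in I_n$; here I would use, exactly as in the proof of Lemma \ref{graph2}, that $\oB_\Ga(v,n)$ is also the radius-$n$ ball centred at $v$ inside $\oB_\Ga(v,m)$ for $m>n$, which guarantees that the stabilized balls are mutually compatible. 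Choosing a diagonal subsequence $k_0<k_1<\cdots$ with $k_n\in I_n$, the radius-$n$ balls of the chosen terms eventually coincide up to isomorphism, so by the definition of $\de$ the diagonal subsequence is Cauchy.

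It then remains to build an actual limit lying in $\MG_0(N,L)$. Let $B_n$ be a representative of the stabilized isomorphism class of radius-$n$ balls. The compatibility above lets me fix isomorphisms of $B_n$ onto the radius-$n$ ball of $B_{n+1}$, and I would form $\Ga$ as the corresponding nested union $\bigcup_n B_n$, sharing the single marked vertex $v$. This $\Ga$ is connected (each $B_n$ is connected and contains the mark) and carries labels in $L$; moreover it is locally finite with all degrees at most $N$, because every vertex $w$ lies at some finite distance $d$ from the mark and every edge incident to $w$ joins $w$ to a vertex at distance at most $d+1$, hence is already present in $B_{d+1}$, so the degree of $w$ in $\Ga$ equals its degree in $B_{d+1}$, which is at most $N$. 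Thus $\Ga\in\MG_0(N,L)$, and by construction $\oB_\Ga(v,n)\cong B_n$ for every $n$; since the diagonal terms have radius-$n$ ball isomorphic to $B_n$ for all large indices, $\de(\Ga^{(k_j)},\Ga)\to0$. The routine parts are the cardinality bound and the diagonal extraction; the step requiring genuine care is the last one, assembling the compatible balls $B_n$ into a single graph $\Ga$ and checking that the degree bound and the ball identifications survive the passage to the limit, for it is precisely here that the hypotheses of a uniform degree bound $N$ and of connectedness are used.
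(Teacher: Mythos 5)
Your proof is correct and follows essentially the same route as the paper's: finitely many isomorphism types of radius-$n$ balls (coming from the degree bound $N$ and the finite label set $L$), a diagonal extraction producing compatible stabilized balls, and a nested-union construction of the limit graph, which is then verified to lie in $\MG_0(N,L)$ and to be the limit of the diagonal subsequence. The only difference is organizational: the paper first treats the special cases of nested subgraphs and of graphs isomorphic to subgraphs of their successors and then reduces the general case to these, whereas you glue the balls $B_n$ directly along chosen isomorphisms --- the same argument, inlined.
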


\begin{proof}
We have to show that any sequence of graphs $\Ga_1,\Ga_2,\dots$ in
$\MG_0(N,L)$ has a subsequence converging to some graph in $\MG_0(N,L)$.
For any positive integer $n$ let $V_n$ denote the vertex set of the graph
$\Ga_n$, $E_n$ denote its sets of edges, and $v_n$ denote the marked vertex
of $\Ga_n$.  First consider the special case when each $\Ga_n$ is a
subgraph of $\Ga_{n+1}$.  Let $\Ga$ be the graph with the vertex set
$V=V_1\cup V_2\cup\dots$ and the set of edges $E=E_1\cup E_2\cup\dots$.  We
assume that any edge $e\in E_n$ retains its attributes (beginning, end, and
label) in the graph $\Ga$.  The common marked vertex of the graphs $\Ga_n$
is set as the marked vertex of $\Ga$.  Note that any finite collection of
vertices and edges of the graph $\Ga$ is already contained in some $\Ga_n$.
As the graphs $\Ga_1,\Ga_2,\dots$ belong to $\MG_0(N,L)$, it follows that
$\Ga\in\MG_0(N,L)$ as well.  In particular, for any integer $k\ge0$ the
closed ball $\oB_\Ga(v_1,k)$ is a finite graph.  Then it is a subgraph of
some $\Ga_n$.  Clearly, $\oB_\Ga(v_1,k)$ is also a subgraph of the graphs
$\Ga_{n+1},\Ga_{n+2},\dots$.  Moreover, it remains the closed ball of
radius $k$ centered at the marked vertex in all these graphs.  It follows
that $\de(\Ga_m,\Ga)<2^{-k}$ for $m\ge n$.  Since $k$ can be arbitrarily
large, the sequence $\Ga_1,\Ga_2,\dots$ converges to $\Ga$ in the metric
space $\MG_0$.

Next consider a more general case when each $\Ga_n$ is isomorphic to a
subgraph of $\Ga_{n+1}$.  This case is reduced to the previous one by
repeatedly using the following observation: if a graph $P_0$ is isomorphic
to a subgraph of a graph $P$ then there exists a graph $P'$ isomorphic to
$P$ such that $P_0$ is a subgraph of $P'$.

Finally consider the general case.  For any graph in $\MG_0(N,L)$, the
closed ball of radius $k$ with any center contains at most
$1+N+N^2+\cdots+N^{k-1}$ vertices while the number of edges is at most $N$
times the number of vertices.  Hence for any fixed $k$ the number of
vertices and edges in the balls $\oB_{\Ga_n}(v_n,k)$ is uniformly bounded,
which implies that there are only finitely many non-isomorphic graphs among
them.  Therefore one can find nested infinite sets of indices $I_0\supset
I_1\supset I_2\supset\dots$ such that the closed balls $\oB_{\Ga_n}(v_n,k)$
are isomorphic for all $n\in I_k$.  Choose an increasing sequence of
indices $n_0,n_1,n_2,\dots$ such that $n_k\in I_k$ for all $k$, and let
$\Ga'_k$ be the closed ball of radius $k$ in the graph $\Ga_{n_k}$ centered
at the marked point $v_{n_k}$.  Clearly, $\Ga'_k\in\MG_0(N,L)$ and
$\de(\Ga'_k,\Ga_{n_k})<2^{-k}$.  By construction, $\Ga'_k$ is isomorphic to
a subgraph of $\Ga'_m$ whenever $k<m$.  By the above the sequence
$\Ga'_0,\Ga'_1,\Ga'_2,\dots$ converges to a graph $\Ga\in\MG_0(N,L)$.
Since $\de(\Ga'_k,\Ga_{n_k})<2^{-k}$ for all $k\ge0$, the subsequence
$\Ga_{n_0},\Ga_{n_1},\Ga_{n_2},\dots$ converges to the graph $\Ga$ as well.
\end{proof}

\section{Group actions}\label{act}

Let $M$ be an arbitrary nonempty set.  Invertible transformations
$\phi:M\to M$ form a transformation group.  An {\em action\/} $A$ of an
abstract group $G$ on the set $M$ is a homomorphism of $G$ into that
transformation group.  The action can be regarded as a collection of
invertible transformations $A_g:M\to M$, $g\in G$, where $A_g$ is the image
of $g$ under the homomorphism.  The transformations are to satisfy
$A_gA_h=A_{gh}$ for all $g,h\in G$.  We say that $A_g$ is the action of an
element $g$ within the action $A$.  Alternatively, the action of the group
$G$ can be given as a mapping $A:G\times M\to M$ such that $A(g,x)=A_g(x)$
for all $g\in G$ and $x\in M$.  Such a mapping defines an action of $G$ if
and only if the following two conditions hold:
\begin{itemize}
\item $A(gh,x)=A(g,A(h,x))$ for all $g,h\in G$ and $x\in M$;
\item $A(1_G,x)=x$ for all $x\in M$, where $1_G$ is the unity of the group
$G$.
\end{itemize}

A nonempty set $S\subset G$ is called a {\em generating set\/} for the
group $G$ if any element $g\in G$ can be represented as a product
$g_1g_2\dots g_k$ where each factor $g_i$ is an element of $S$ or the
inverse of an element of $S$.  The elements of the generating set are
called {\em generators\/} of the group $G$.  The generating set $S$ is {\em
symmetric\/} if it is closed under taking inverses, i.e., $s^{-1}\in S$
whenever $s\in S$.  If $S$ is a generating set for $G$ then any action $A$
of the group $G$ is uniquely determined by transformations $A_s$, $s\in S$.

Suppose $G$ is a topological group.  An action of $G$ on a topological
space $M$ is a {\em continuous action\/} if it is continuous as a mapping
of $G\times M$ to $M$.  Similarly, an action of $G$ on a measured space $M$
is a {\em measurable action\/} if it is measurable as a mapping of
$G\times M$ to $M$.  A measurable action $A$ of the group $G$ on a measured
space $M$ with a measure $\mu$ is {\em measure-preserving\/} if the action
of every element of $G$ is measure-preserving, i.e.,
$\mu\bigl(A_g^{-1}(W)\bigr)=\mu(W)$ for all $g\in G$ and measurable sets
$W\subset M$.  In what follows, the group $G$ will be a discrete countable
group.  In that case, an action $A$ of $G$ is continuous if and only if all
transformations $A_g$, $g\in G$ are continuous.  Likewise, the action $A$
is measurable if and only if every $A_g$ is measurable.

Given an action $A$ of a group $G$ on a set $M$, the {\em orbit\/} $O_A(x)$
of a point $x\in M$ under the action $A$ is the set of all points $A_g(x)$,
$g\in G$.  A subset $M_0\subset M$ is {\em invariant\/} under the action
$A$ if $A_g(M_0)\subset M_0$ for all $g\in G$.  Clearly, the orbit $O_A(x)$
is invariant under the action.  Moreover, this is the smallest invariant
set containing $x$.  The {\em restriction\/} of the action $A$ to a
nonempty invariant set $M_0$ is an action of $G$ obtained by restricting
every transformation $A_g$ to $M_0$.  Equivalently, one might restrict the
mapping $A:G\times M\to M$ to the set $G\times M_0$.  The action $A$ is
{\em transitive\/} if the only invariant subsets of $M$ are the empty set
and $M$ itself.  Equivalently, the orbit of any point is the entire set
$M$.  Assuming the action $A$ is continuous, it is {\em topologically
transitive\/} if there is an orbit dense in $M$, and {\em minimal\/} if
every orbit of $A$ is dense.  The action is minimal if and only if the
empty set and $M$ are the only closed invariant subsets of $M$.  Assuming
the action $A$ is measure-preserving, it is {\em ergodic\/} if any
measurable invariant subset of $M$ has zero or full measure.  A continuous
action on a compact space $M$ is {\em uniquely ergodic\/} if there exists a
unique Borel probability measure on $M$ invariant under the action (the
action is going to be ergodic with respect to that measure).

Given an action $A$ of a group $G$ on a set $M$, the {\em stabilizer\/}
$\St_A(x)$ of a point $x\in M$ under the action $A$ is the set of all
elements $g\in G$ whose action fixes $x$, i.e., $A_g(x)=x$.  The stabilizer
$\St_A(x)$ is a subgroup of $G$.  The action is {\em free\/} if all
stabilizers are trivial.  In the case when the action $A$ is continuous, we
define the {\em neighborhood stabilizer\/} $\St^o_A(x)$ of a point $x\in M$
to be the set of all $g\in G$ whose action fixes the point $x$ along with
its neighborhood (the neighborhood may depend on $g$).  The neighborhood
stabilizer $\St^o_A(x)$ is a normal subgroup of $\St_A(x)$.

Let $A:G\times M_1\to M_1$ and $B:G\times M_2\to M_2$ be actions of a group
$G$ on sets $M_1$ and $M_2$, respectively.  The actions $A$ and $B$ are
{\em conjugated\/} if there exists a bijection $f:M_1\to M_2$ such that
$B_g=fA_gf^{-1}$ for all $g\in G$.  An equivalent condition is that
$A(g,x)=B(g,f(x))$ for all $g\in G$ and $x\in M_1$.  The bijection $f$ is
called a {\em conjugacy\/} of the action $A$ with $B$.  Two continuous
actions of the same group are {\em continuously conjugated\/} if they are
conjugated and, moreover, the conjugacy can be chosen to be a
homeomorphism.  Similarly, two measurable actions are {\em measurably
conjugated\/} if they are conjugated and, moreover, the conjugacy $f$ can
be chosen so that both $f$ and the inverse $f^{-1}$ are measurable.  Also,
two measure-preserving actions are {\em isomorphic\/} if they are
conjugated and, moreover, the conjugacy can be chosen to be an isomorphism
of spaces with measure.  The measure-preserving actions are {\em isomorphic
modulo zero measure\/} if each action admits an invariant set of full
measure such that the corresponding restrictions are isomorphic.

Given two actions $A:G\times M_1\to M_1$ and $B:G\times M_2\to M_2$ of a
group $G$, the action $A$ is an {\em extension\/} of $B$ if there exists a
mapping $f$ of $M_1$ onto $M_2$ such that $B_gf=fA_g$ for all $g\in G$.
The extension is {\em $k$-to-$1$} if $f$ is $k$-to-$1$.  The extension is
{\em continuous\/} if the actions $A$ and $B$ are continuous and $f$ can be
chosen continuous.

\section{The Schreier graphs}\label{sch}

Let $G$ be a finitely generated group.  Let us fix a finite symmetric
generating set $S$ for $G$.  Given an action $A$ of the group $G$ on a set
$M$, the {\em Schreier graph\/} $\Ga_{\Sch}(G,S;A)$ of the action relative
to the generating set $S$ is a directed graph with labeled edges.  The
vertex set of the graph $\Ga_{\Sch}(G,S;A)$ is $M$, the set of edges is
$M\times S$, and the set of labels is $S$.  For any $x\in M$ and $s\in S$
the edge $(x,s)$ has beginning $x$, end $A_s(x)$, and carries label $s$.
Clearly, the action $A$ can be uniquely recovered from its Schreier graph.
Given another action $A'$ of $G$ on some set $M'$, the Schreier graph
$\Ga_{\Sch}(G,S;A')$ is isomorphic to $\Ga_{\Sch}(G,S;A)$ if and only if
the actions $A$ and $A'$ are conjugated.  Indeed, a bijection $f:M\to M'$
is an isomorphism of the Schreier graphs if and only if $A'_s=fA_sf^{-1}$
for all $s\in S$, which is equivalent to $f$ being a conjugacy of the
action $A$ with $A'$.

Any graph of the form $\Ga_{\Sch}(G,S;A)$ is called a {\em Schreier
graph\/} of the group $G$ (relative to the generating set $S$).  Notice
that any graph isomorphic to a Schreier graph is also a Schreier graph up
to renaming edges.  This follows from the next proposition, which explains
how to recognize a Schreier graph of $G$.

\begin{proposition}\label{sch1}
A directed graph $\Ga$ with labeled edges is, up to renaming edges, a
Schreier graph of the group $G$ relative to the generating set $S$ if and
only if the following conditions are satisfied:
\begin{itemize}
\item[(i)]
all labels are in $S$;
\item[(ii)]
for any vertex $v$ and any generator $s\in S$ there exists a unique edge
with beginning $v$ and label $s$;
\item[(iii)]
given a directed path with code word $s_1s_2\dots s_k$, the path is closed
whenever the reversed code word $s_k\dots s_2s_1$ equals $1_G$ when
regarded as a product in $G$.
\end{itemize}
\end{proposition}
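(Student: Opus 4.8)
The plan is to prove both directions, the forward one being a routine verification and the converse requiring a genuine construction. For the \emph{necessity} direction it suffices to check (i)--(iii) for an honest Schreier graph $\Ga_{\Sch}(G,S;A)$, since the three conditions refer only to labels, beginnings, ends and paths, and hence are insensitive to renaming of edges. Conditions (i) and (ii) are read off the definition: the edge set is $M\times S$, every label lies in $S$, and $(v,s)$ is the unique edge with beginning $v$ and label $s$. For (iii) I would follow a directed path with code word $s_1s_2\dots s_k$ from a vertex $v_0$; since the edge carrying label $s_i$ out of $v_{i-1}$ ends at $A_{s_i}(v_{i-1})$, an easy induction gives $v_k=A_{s_k}\circ\cdots\circ A_{s_1}(v_0)=A_{s_k\cdots s_1}(v_0)$ by the homomorphism property $A_gA_h=A_{gh}$. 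Thus if the reversed code word $s_k\cdots s_1$ equals $1_G$ then $v_k=v_0$, so the path is closed.

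For \emph{sufficiency} I would manufacture an action of $G$ on the vertex set $M$ of $\Ga$. Define $A_s\colon M\to M$ for each $s\in S$ by letting $A_s(v)$ be the end of the unique edge with beginning $v$ and label $s$; this is legitimate by (i) and (ii). The key reformulation of (iii) is the following observation: for any generators $u_1,\dots,u_p\in S$ with $u_1u_2\cdots u_p=1_G$ in $G$, the composite $A_{u_1}\circ A_{u_2}\circ\cdots\circ A_{u_p}$ is the identity. Indeed, by (ii) there is, from any vertex, a unique path with code word $u_p u_{p-1}\cdots u_1$; its associated transformation is exactly $A_{u_1}\circ\cdots\circ A_{u_p}$, and its reversed code word $u_1\cdots u_p$ equals $1_G$, so (iii) forces the path to be closed. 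Applying this to the relations $ss^{-1}=s^{-1}s=1_G$, which are available because $S$ is symmetric, shows that $A_s$ and $A_{s^{-1}}$ are mutually inverse; in particular each $A_s$ is a bijection with $A_s^{-1}=A_{s^{-1}}$.

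The main step is to extend $\{A_s\}_{s\in S}$ to a well-defined action. For $g\in G$ I would fix a factorization $g=s_k\cdots s_1$ into generators and set $A_g=A_{s_k}\circ\cdots\circ A_{s_1}$. To check independence of the factorization, suppose also $g=s'_m\cdots s'_1$; then $s_k\cdots s_1(s'_1)^{-1}\cdots(s'_m)^{-1}=gg^{-1}=1_G$ is a product of generators, so the observation above yields $A_{s_k}\circ\cdots\circ A_{s_1}\circ A_{(s'_1)^{-1}}\circ\cdots\circ A_{(s'_m)^{-1}}=\mathrm{id}$. Since $A_{(s'_j)^{-1}}=A_{s'_j}^{-1}$, the trailing factors combine to $(A_{s'_m}\circ\cdots\circ A_{s'_1})^{-1}$, and rearranging gives $A_{s_k}\circ\cdots\circ A_{s_1}=A_{s'_m}\circ\cdots\circ A_{s'_1}$. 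The relations $A_gA_h=A_{gh}$ and $A_{1_G}=\mathrm{id}$ then follow at once by concatenating factorizations, so $A$ is an action. Finally, $\Ga_{\Sch}(G,S;A)$ has the same vertex set $M$, and its edge $(v,s)$ runs from $v$ to $A_s(v)$ with label $s$, i.e.\ to the end of the edge of $\Ga$ from which $A_s(v)$ was defined; by (i) and (ii) the map $e\mapsto(\al(e),l(e))$ is a bijection of the edges of $\Ga$ onto $M\times S$ preserving all attributes, so $\Ga$ coincides with $\Ga_{\Sch}(G,S;A)$ up to renaming edges.

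The one place that demands care, and which I expect to be the main obstacle, is the bookkeeping of orientation: the composition order of the maps $A_s$ is the reverse of the order in which labels are read along a path, which is precisely why (iii) is stated in terms of the \emph{reversed} code word. Keeping these conventions consistent---especially when the inverse generators $(s'_j)^{-1}$ are spliced in during the well-definedness argument---is the only genuinely delicate point; everything else is a direct consequence of conditions (i) and (ii).
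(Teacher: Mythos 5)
Your proposal is correct and follows essentially the same route as the paper: the forward direction is the same path computation, and for the converse both arguments build the action from the edge-following maps, use condition (iii) to show that words representing $1_G$ act trivially, and exploit the symmetry of $S$ to splice in inverse generators for the well-definedness of $A_g$. The only difference is organizational — the paper packages the construction through transformations $B_w$ indexed by words over $S$ and proves $B_w=B_{w'}$ via the identity $B_w=B_wB_{zw'}=B_{wz}B_{w'}=B_{w'}$, whereas you first establish $A_{s^{-1}}=A_s^{-1}$ and then cancel an inverse factorization, which amounts to the same computation.
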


\begin{proof}
First suppose $\Ga$ is a Schreier graph $\Ga_{\Sch}(G,S;A)$.  Consider an
arbitrary directed path in the graph $\Ga$.  Let $v$ be the beginning of
the path and $s_1s_2\dots s_k$ be its code word.  Then the consecutive
vertices of the path are $v_0=v,v_1,\dots,v_k$, where
$v_i=A_{s_i}(v_{i-1})$ for $1\le i\le k$.  Hence the end of the path is
$A_{s_k}\dots A_{s_2}A_{s_1}(v)=A_g(v)$, where $g$ denotes
$s_k\dots s_2s_1$ regarded as a product in $G$.  Clearly, the path is
closed whenever $g=1_G$.  Thus any Schreier graph of the group $G$
satisfies the condition (iii).  The conditions (i) and (ii) are trivially
satisfied as well.  It is easy to see that the conditions (i), (ii), and
(iii) are preserved under isomorphisms of graphs.  In particular, they hold
for any graph that coincides with a Schreier graph up to renaming edges.

Now suppose $\Ga$ is a directed graph with labeled edges that satisfies the
conditions (i), (ii), and (iii).  Let $M$ denote the vertex set of $\Ga$.
Given a word $w=s_1s_2\dots s_k$ over the alphabet $S$, we define a
transformation $B_w:M\to M$ as follows.  The condition (ii) implies that
for any vertex $v\in M$ there is a unique directed path in $\Ga$ with
beginning $v$ and code word $s_k\dots s_2s_1$ (the word $w$ reversed).  We
set $B_w(v)$ to be the end of that path.  For any words $w=s_1s_2\dots s_k$
and $w'=s'_1s'_2\dots s'_m$ over the alphabet $S$ let $ww'$ denote the
concatenated word $s_1s_2\dots s_ks'_1s'_2\dots s'_m$.  Then $B_{ww'}(v)
=B_w(B_{w'}(v))$ for all $v\in M$.  Any word over the alphabet $S$ can be
regarded as a product in the group $G$ thus representing an element $g\in
G$.  Clearly, the concatenation of words corresponds to the multiplication
in the group.  The condition (iii) means that $B_w$ is the identity
transformation whenever the word $w$ represents the unity $1_G$.  This
implies that transformations $B_w$ and $B_{w'}$ are the same if the words
$w$ and $w'$ represent the same element $g\in G$.  Indeed, let
$w=s_1s_2\dots s_k$, $w'=s'_1s'_2\dots s'_m$ and consider the third word
$z=s_k^{-1}\dots s_2^{-1}s_1^{-1}$.  The word $z$ represents the inverse
$g^{-1}$.  Therefore the words $wz$ and $zw'$ both represent the unity.
Then $B_w=B_wB_{zw'}=B_{wzw'}=B_{wz}B_{w'}=B_{w'}$.  Now for any $g\in G$
we let $A_g=B_w$, where $w$ is an arbitrary word over the alphabet $S$
representing $g$.  By the above $A_g$ is a well-defined transformation of
$M$, $A_{1_G}$ is the identity transformation, and $A_{gg'}=A_gA_{g'}$ for
all $g,g'\in G$.  Hence the transformations $A_g$, $g\in G$ constitute an
action $A$ of the group $G$ on the vertex set $M$.  By construction, for
any $v\in M$ and $s\in S$ the vertex $A_s(v)$ is the end of the edge with
beginning $v$ and label $s$.  In view of the conditions (i) and (ii), this
means that the graph $\Ga$ coincides with the Schreier graph
$\Ga_{\Sch}(G,S;A)$ up to renaming edges.
\end{proof}

For any $x\in M$ let $\Ga_{\Sch}(G,S;A,x)$ denote the Schreier graph of the
restriction of the action $A$ to the orbit of $x$.  We refer to
$\Ga_{\Sch}(G,S;A,x)$ as the Schreier graph of the orbit of $x$.  It is
easy to observe that $\Ga_{\Sch}(G,S;A,x)$ is the connected component of
the graph $\Ga_{\Sch}(G,S;A)$ containing the vertex $x$.  In particular,
the Schreier graph of the action $A$ is connected if and only if the action
is transitive, in which case $\Ga_{\Sch}(G,S;A,x)=\Ga_{\Sch}(G,S;A)$ for
all $x\in M$.  Let $\Ga_{\Sch}^*(G,S;A,x)$ denote a marked graph obtained
from $\Ga_{\Sch}(G,S;A,x)$ by marking the vertex $x$.  We refer to it as
the {\em marked Schreier graph\/} of the point $x$ (under the action $A$).
Notice that the point $x$ and the restriction of the action $A$ to its
orbit are uniquely recovered from the graph $\Ga_{\Sch}^*(G,S;A,x)$.  Any
graph of the form $\Ga_{\Sch}^*(G,S;A,x)$ is called a marked Schreier graph
of the group $G$ (relative to the generating set $S$).

Let $\Sch(G,S)$ denote the set of isomorphism classes of all marked
Schreier graphs of the group $G$ relative to the generating set $S$.  A
graph $\Ga\in\MG$ belongs to $\Sch(G,S)$ if it is a marked directed graph
that is connected and satisfies conditions (i), (ii), (iii) of Proposition
\ref{sch1}.

The group $G$ acts naturally on the set of the marked Schreier graphs of
$G$ by changing the marked vertex.  The action $\cA$ is given by
$\cA_g\bigl(\Ga_{\Sch}^*(G,S;A,x)\bigr)=\Ga_{\Sch}^*(G,S;A,A_g(x))$,
$g\in G$.  It turns out that $\cA$ is well defined as an action on
$\Sch(G,S)$.  Indeed, let $\Ga_{\Sch}^*(G,S;B,y)$ be a marked Schreier
graph isomorphic to $\Ga_{\Sch}^*(G,S;A,x)$.  Then any isomorphism $f$ of
the latter graph with the former one is simultaneously a conjugacy of the
restriction of the action $A$ to the orbit of $x$ with the restriction of
the action $B$ to the orbit of $y$.  Since $f(x)=y$, it follows that
$f(A_g(x))=B_g(y)$ for all $g\in G$.  Hence for any $g\in G$ the map $f$ is
also an isomorphism of the graph $\Ga_{\Sch}^*(G,S;A,A_g(x))$ with
$\Ga_{\Sch}^*(G,S;B,B_g(y))$.

\begin{proposition}\label{sch2}
$\Sch(G,S)$ is a compact subset of the metric space $\MG_0$.  The action of
the group $G$ (regarded as a discrete group) on $\Sch(G,S)$ is continuous.
\end{proposition}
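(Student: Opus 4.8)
The plan is to obtain compactness from Proposition~\ref{graph4} by realizing $\Sch(G,S)$ as a closed subset of a suitable compact space $\MG_0(N,L)$, and then to verify continuity of the action one generator at a time. First I would fix $N=2|S|$ and $L=S$. In any Schreier graph, condition~(ii) of Proposition~\ref{sch1} forces each vertex $v$ to be the beginning of exactly $|S|$ edges; since $S$ is symmetric, $v$ is also the end of exactly $|S|$ edges, so at most $2|S|$ edges are incident to $v$, and every label lies in $S$. Hence $\Sch(G,S)\subset\MG_0(N,L)$, which is compact. It then suffices to show $\Sch(G,S)$ is closed in $\MG_0(N,L)$.

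For closedness, suppose $\Ga_1,\Ga_2,\dots\in\Sch(G,S)$ converge to some $\Ga\in\MG_0(N,L)$ with marked vertex $v$; I must check that $\Ga$ satisfies (i)--(iii) of Proposition~\ref{sch1}. The key observation is that each condition is \emph{local} and hence testable on finite balls. Condition~(i) is automatic since $L=S$. For a vertex $u$ at distance $d$ from $v$, every edge issuing from $u$ lies in $\oB_\Ga(v,d+1)$, and a directed path of length $k$ starting at $u$ whose reversed code word equals $1_G$ lies entirely in $\oB_\Ga(v,d+k)$. By convergence and Lemma~\ref{graph2}, $\oB_\Ga(v,r)\cong\oB_{\Ga_n}(v_n,r)$ for every $r$ and all large $n$, where $v_n$ is the marked vertex of $\Ga_n$. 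Since (ii) and (iii) hold at every vertex and along every relevant path of each $\Ga_n\in\Sch(G,S)$, transporting them across these isomorphisms (with $r=d+1$ for (ii) and $r=d+k$ for (iii)) shows they hold at $u$ and along the chosen path in $\Ga$. As $u$ and the path were arbitrary, $\Ga\in\Sch(G,S)$.

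For the action, recall from Section~\ref{act} that, $G$ being discrete, it suffices to prove each $\cA_g$ continuous; and since the $\cA_g$ are composites of the maps $\cA_s$ with $s\in S$, I need only treat a single generator. The map $\cA_s$ moves the marked vertex $v$ to the end $v'$ of the unique edge with beginning $v$ and label $s$, so $v'$ is at distance $1$ from $v$. If $\de(\Ga_1,\Ga_2)\le 2^{-(r+1)}$, then $\oB_{\Ga_1}(v,r+1)\cong\oB_{\Ga_2}(w,r+1)$, and any such isomorphism, preserving marked vertices and $s$-labeled edges, carries the $s$-neighbor $v'$ of $v$ to the $s$-neighbor $w'$ of $w$. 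Because $v'$ lies at distance $1$ from $v$, shortest paths out of $v'$ of length at most $r$ stay inside the radius-$(r+1)$ ball, so the isomorphism restricts to $\oB_{\Ga_1}(v',r)\cong\oB_{\Ga_2}(w',r)$, whence $\de(\cA_s(\Ga_1),\cA_s(\Ga_2))\le 2^{-r}$. Thus $\de(\cA_s(\Ga_1),\cA_s(\Ga_2))\le 2\,\de(\Ga_1,\Ga_2)$, so each $\cA_s$ is Lipschitz and the action is continuous.

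I expect the only genuine obstacle to be the closedness step, and within it the \emph{existence} half of condition~(ii). A finite ball of $\Ga$ appears to be missing outgoing edges at its boundary vertices, since those edges reach distance one farther out; consequently the ball itself need not satisfy (ii), and one cannot read off the condition at a vertex lying on the boundary of the approximating ball. The remedy is to surround the scrutinized vertex $u$ first, working with $\oB_\Ga(v,d+1)$ rather than $\oB_\Ga(v,d)$, so that $u$ becomes interior and all its incident edges are captured before the condition is transported from the $\Ga_n$. Once this radius bookkeeping is handled correctly for (ii) and analogously for (iii), the remaining verifications are routine.
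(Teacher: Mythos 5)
Your proposal is correct and follows essentially the same route as the paper: embed $\Sch(G,S)$ in the compact set $\MG_0(2|S|,S)$, deduce closedness from the locality of conditions (i)--(iii) of Proposition~\ref{sch1} (the paper phrases this contrapositively, exhibiting for each $\Ga\notin\Sch(G,S)$ a basic neighborhood $\cU(\Ga_0,\emptyset)$ or $\cU(\Ga_0,\{v\})$ disjoint from $\Sch(G,S)$, while you transport the conditions across ball isomorphisms to a sequential limit, with the same radius bookkeeping at boundary vertices), and prove continuity via the identical $2$-Lipschitz estimate for each $\cA_s$. The only blemish is a harmless off-by-one: $\de(\Ga_1,\Ga_2)\le 2^{-(r+1)}$ guarantees isomorphism of the radius-$r$ balls, not the radius-$(r+1)$ balls, but shifting your indices by one still yields $\de(\cA_s(\Ga_1),\cA_s(\Ga_2))\le 2\,\de(\Ga_1,\Ga_2)$ exactly as in the paper.
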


\begin{proof}
Let $N$ be the number of elements in the generating set $S$.  Then every
vertex $v$ of a graph $\Ga\in\Sch(G,S)$ is the beginning of exactly $N$
edges.  Furthermore, $v$ is the end of an edge with beginning $v'$ and
label $s$ if and only if $v'$ is the end of the edge with beginning $v$ and
label $s^{-1}$.  It follows that $v$ is also the end of exactly $N$ edges.
Hence any vertex of $\Ga$ is an endpoint for at most $2N$ edges.  Therefore
$\Sch(G,S)\subset\MG(2N,S)$.  Since all marked Schreier graphs are
connected, we have $\Sch(G,S)\subset\MG_0(2N,S)\subset\MG_0$.

Now let us show that the set $\Sch(G,S)$ is closed in the topological space
$\MG_0$.  Take any graph $\Ga\in\MG_0$ not in that set.  Then $\Ga$ does
not satisfy at least one of the conditions (i), (ii), and (iii) in
Proposition \ref{sch1}.  First consider the case when the condition (i) or
(iii) does not hold.  Since the graph $\Ga$ is locally finite, it has a
finite subgraph $\Ga_0$ for which the same condition does not hold.  Since
$\Ga$ is connected, we can choose the subgraph $\Ga_0$ to be marked and
connected so that $\Ga_0\in\MG_0$.  Clearly, the same condition does not
hold for any graph $\Ga'$ such that $\Ga_0$ is a subgraph of $\Ga'$.  It
follows that the neighborhood $\cU(\Ga_0,\emptyset)$ of the graph $\Ga$ is
disjoint from $\Sch(G,S)$.  Next consider the case when $\Ga$ does not
satisfy the condition (ii).  Let $v$ be the vertex of $\Ga$ such that for
some generator $s\in S$ there are either several edges with beginning $v$
and label $s$ or no such edges at all.  Since $\Ga\in\MG_0$, there exists a
finite connected subgraph $\Ga_0$ of $\Ga$ that contains the marked vertex,
the vertex $v$, and all edges for which $v$ is an endpoint.  Then
$\Ga_0\in\MG_0$ and the open set $\cU(\Ga_0,\{v\})$ is a neighborhood of
$\Ga$.  By construction, the condition (ii) fails in the entire
neighborhood so that $\cU(\Ga_0,\{v\})$ is disjoint from $\Sch(G,S)$.  Thus
the set $\MG_0\setminus\Sch(G,S)$ is open in $\MG_0$.  Therefore the set
$\Sch(G,S)$ is closed.

Since the closed set $\Sch(G,S)$ is contained in $\MG_0(2N,S)$, which is a
compact set due to Proposition \ref{graph4}, the set $\Sch(G,S)$ is compact
as well.

An action of the group $G$ is continuous whenever the generators act
continuously.  To prove that the transformations $\cA_s$, $s\in S$ are
continuous, we are going to show that $\de(\cA_s(\Ga),\cA_s(\Ga'))\le
2\de(\Ga,\Ga')$ for any graphs $\Ga,\Ga'\in\Sch(G,S)$ and any generator
$s\in S$.  If the graphs $\Ga$ and $\Ga'$ are isomorphic, then the graphs
$\cA_s(\Ga)$ and $\cA_s(\Ga')$ are also isomorphic so that
$\de(\cA_s(\Ga),\cA_s(\Ga'))=\de(\Ga,\Ga')=0$.  Otherwise
$\de(\Ga,\Ga')=2^{-n}$ for some nonnegative integer $n$.  Since the
distance between any graphs in $\MG_0$ never exceeds $1$, it is enough to
consider the case $n\ge2$.  Let $v$ denote the marked vertex of $\Ga$ and
$v'$ denote the marked vertex of $\Ga'$.  By definition of the distance
function, the closed balls $\oB_\Ga(v,n-1)$ and $\oB_{\Ga'}(v',n-1)$ are
isomorphic.  Consider an isomorphism $f$ of these graphs.  Clearly,
$f(v)=v'$.  Let $v_1$ denote the marked vertex of the graph $\cA_s(\Ga)$
and $v'_1$ denote the marked vertex of $\cA_s(\Ga')$.  Then $v_1$ is the
end of the edge with beginning $v$ and label $s$ in the graph $\Ga$.
Similarly, $v'_1$ is the end of the edge with beginning $v'$ and label $s$
in $\Ga'$.  It follows that $f(v_1)=v'_1$.  Since the vertex $v_1$ is
joined to $v$ by an edge, the closed ball $\oB_\Ga(v_1,n-2)$ is a subgraph
of $\oB_\Ga(v,n-1)$.  Note that $\oB_\Ga(v_1,n-2)$ remains the closed ball
with the same center and radius in the graph $\oB_\Ga(v,n-1)$.  Similarly,
$\oB_{\Ga'}(v'_1,n-2)$ is a subgraph of $\oB_{\Ga'}(v,n-1)$ and it is also
the closed ball of radius $n-2$ centered at $v'_1$ in the graph
$\oB_{\Ga'}(v,n-1)$.  Since $f(v_1)=v'_1$ and any isomorphism of graphs
preserves distance between vertices, the restriction $f_0$ of $f$ to the
vertex set of $\oB_\Ga(v_1,n-2)$ is an isomorphisms of the graphs
$\oB_\Ga(v_1,n-2)$ and $\oB_{\Ga'}(v'_1,n-2)$.  It remains to notice that
the closed ball $\oB_{\cA_s(\Ga)}(v_1,n-2)$ differs from $\oB_\Ga(v_1,n-2)$
in that the marked vertex is $v_1$ and, similarly,
$\oB_{\cA_s(\Ga')}(v'_1,n-2)$ differs from $\oB_{\Ga'}(v'_1,n-2)$ in that
the marked vertex is $v'_1$.  Therefore $f_0$ is also an isomorphism of
$\oB_{\cA_s(\Ga)}(v_1,n-2)$ and $\oB_{\cA_s(\Ga')}(v'_1,n-2)$.  By
definition of the distance function, $\de(\cA_s(\Ga),\cA_s(\Ga'))\le
2^{-(n-1)}=2\de(\Ga,\Ga')$.
\end{proof}

Let $A$ be an action of the group $G$ on a set $M$.  To any point $x\in M$
we associate three subgroups of $G$: the stabilizer $\St_A(x)$ of $x$, the
stabilizer $\St_{\cA}(\Ga^*_x)$ of the marked Schreier graph
$\Ga^*_x=\Ga_{\Sch}^*(G,S;A,x)$, and the neighborhood stabilizer
$\St_{\cA}^o(\Ga^*_x)$ (if the action $A$ is continuous then there is the
fourth subgroup, the neighborhood stabilizer of $x$).  Clearly, the graph
$\Ga^*_{A_g(x)}$ coincides with $\Ga^*_x$ if and only if $A_g(x)=x$.
However this does not imply that the stabilizer of the graph is the same as
the stabilizer of $x$.  Since $\cA$ is an action on isomorphism classes of
graphs, we have $g\in\St_{\cA}(\Ga^*_x)$ if and only if the graph
$\Ga^*_{A_g(x)}$ is isomorphic to $\Ga^*_x$.

\begin{lemma}\label{sch3}
(i) $\St_A(x)$ is a normal subgroup of
$\St_{\cA}\bigl(\Ga_{\Sch}^*(G,S;A,x)\bigr)$.

(ii) The quotient of $\St_{\cA}\bigl(\Ga_{\Sch}^*(G,S;A,x)\bigr)$ by
$\St_A(x)$ is isomorphic to the group of all automorphisms of the unmarked
graph $\Ga_{\Sch}(G,S;A,x)$.

(iii) $\St_A(x)$ is a subgroup of
$\St_{\cA}^o\bigl(\Ga_{\Sch}^*(G,S;A,x)\bigr)$.
\end{lemma}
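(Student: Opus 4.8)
The plan is to identify the stabilizer $\St_{\cA}(\Ga^*_x)$ of the marked Schreier graph with the normalizer of $\St_A(x)$ in $G$; once this is established, parts (i) and (ii) become essentially group-theoretic, while (iii) requires a separate topological argument. Throughout I would write $\Gamma=\Ga_{\Sch}(G,S;A,x)$ for the underlying unmarked graph, $\Ga^*_x=\Ga_{\Sch}^*(G,S;A,x)$, and $H=\St_A(x)$.

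First I would describe the automorphisms of $\Gamma$. A graph automorphism of $\Gamma$ is exactly a bijection $f$ of the orbit $O_A(x)$ commuting with the action, i.e. $f(A_s(y))=A_s(f(y))$ for all $y$ and $s\in S$ (equivalently for all $g\in G$); this is forced by condition (ii) of Proposition \ref{sch1}, since the companion edge map must preserve labels and directions and the edge out of a vertex with a given label is unique. Because the action on the orbit is transitive, such an $f$ is determined by the single value $f(x)$, as $f(A_g(x))=A_g(f(x))$. Checking well-definedness and bijectivity of $A_g(x)\mapsto A_g(f(x))$ shows a value $f(x)=y$ is admissible precisely when $\St_A(y)=\St_A(x)$. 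Combining this with the conjugation identity $\St_A(A_g(x))=g\,H\,g^{-1}$ and the remark that an isomorphism of the marked graphs $\Ga^*_{A_g(x)}$ and $\Ga^*_x$ is the same thing as an automorphism of $\Gamma$ carrying $A_g(x)$ to $x$, I obtain
$$\St_{\cA}(\Ga^*_x)=\{g\in G\mid \St_A(A_g(x))=\St_A(x)\}=\{g\in G\mid gHg^{-1}=H\}=N_G(H),$$
the normalizer of $H=\St_A(x)$ in $G$.

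Given this identification, (i) is immediate, since $H$ is always contained in and normal in its own normalizer $N_G(H)$ (the containment $H\subseteq\St_{\cA}(\Ga^*_x)$ can also be seen directly: $g\in H$ gives $A_g(x)=x$, hence $\Ga^*_{A_g(x)}=\Ga^*_x$). For (ii) I would exhibit a surjection $N_G(H)\to\mathrm{Aut}(\Gamma)$ with kernel $H$ and invoke the first isomorphism theorem. The natural candidate sends $g$ to the map $\sigma_g\colon A_h(x)\mapsto A_{hg}(x)$; it is well defined exactly because $g$ normalizes $H$, it commutes with the action, and its inverse is $\sigma_{g^{-1}}$, so $\sigma_g\in\mathrm{Aut}(\Gamma)$. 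By the description of automorphisms above, every element of $\mathrm{Aut}(\Gamma)$ arises as some $\sigma_g$, and $\sigma_g=\mathrm{id}$ iff $g\in H$, giving the kernel. Since $g\mapsto\sigma_g$ reverses the order of multiplication ($\sigma_g\sigma_{g'}=\sigma_{g'g}$), I would pass to $g\mapsto\sigma_{g^{-1}}$ to get an honest homomorphism, yielding $N_G(H)/H\cong\mathrm{Aut}(\Gamma)$, which is the assertion.

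Finally, (iii) is a separate, topological point: I must show each $g\in H$ lies in the \emph{neighborhood} stabilizer, i.e. that $\cA_g$ acts as the identity on a whole neighborhood of $\Ga^*_x$, not merely fixes $\Ga^*_x$. Fix a word $w$ of length $m$ over $S$ representing $g$. Since $A_g(x)=x$, the directed path based at $x$ in $\Gamma$ whose code word is $w$ reversed is closed and lies inside $\oB_\Gamma(x,m)$. For any $\Ga'\in\cB(\Ga^*_x,2^{-m})$ with marked vertex $y$, the radius-$m$ balls about $x$ and $y$ are isomorphic as marked graphs, so the corresponding directed path based at $y$ in $\Ga'$ is transported by this ball isomorphism and is therefore also closed; by the recipe of Proposition \ref{sch1} for computing $\cA_g$, the marked vertex of $\cA_g(\Ga')$ returns to $y$, whence $\cA_g(\Ga')=\Ga'$. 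Thus $\cA_g$ fixes every point of $\cB(\Ga^*_x,2^{-m})$, so $g\in\St^o_{\cA}(\Ga^*_x)$. The main obstacle is the bookkeeping in this last step: one must verify that the path realizing $g$ stays within the radius-$m$ ball so that the ball isomorphism transports it, and keep the code-word reversal conventions of Proposition \ref{sch1} consistent; the remainder of the argument is a clean translation into the normalizer quotient.
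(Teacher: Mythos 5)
Your proposal is correct and follows essentially the same route as the paper: the paper likewise characterizes automorphisms of the unmarked Schreier graph as action-commuting bijections determined by the image of $x$, builds the very same map $g\mapsto\sigma_{g^{-1}}$ (there written $\Psi(g)=\psi_{g^{-1}}$, with the same inverse twist to repair the anti-homomorphism) onto the automorphism group with kernel $\St_A(x)$, and proves (iii) by observing that the closed path realizing $g$ at the marked vertex persists throughout a neighborhood. The only cosmetic differences are that the paper never names $\St_{\cA}(\Ga^*_x)$ as the normalizer $N_G(\St_A(x))$ — it gets (i) for free because kernels of homomorphisms are normal — and that in (iii) it uses the basic open set $\cU(\Ga_0^*,\emptyset)$ spanned by the closed path instead of your metric ball $\cB(\Ga^*_x,2^{-m})$, which is an equivalent neighborhood by Lemma \ref{graph3}.
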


\begin{proof}
Without loss of generality we can assume that the action $A$ is transitive.
For brevity, let $\Ga^*$ denote the marked graph $\Ga_{\Sch}^*(G,S;A,x)$,
$\Ga$ denote the unmarked graph $\Ga_{\Sch}(G,S;A,x)$, and $R$ denote the
group of all automorphisms of $\Ga$.  Consider an arbitrary $f\in R$.  For
any vertex $y\in O_A(x)$ and any label $s\in S$ the unique edge of $\Ga$
with beginning $y$ and label $s$ has end $A_s(y)$.  It follows that
$f(A_s(y))=A_s(f(y))$.  Since the action $A$ is transitive, the
automorphism $f$ commutes with transformations $A_s$, $s\in S$.  Then $f$
commutes with $A_g$ for all $g\in G$.  Notice that the automorphism $f$ is
uniquely determined by the vertex $f(x)$.  Indeed, any vertex $y$ of $\Ga$
is represented as $A_g(x)$ for some $g\in G$, then
$f(y)=f(A_g(x))=A_g(f(x))$.  In particular, $f$ is the identity if
$f(x)=x$.

To prove the statements (i) and (ii), we are going to construct a
homomorphism $\Psi$ of the stabilizer $\St_{\cA}(\Ga^*)$ onto the group $R$
with kernel $\St_A(x)$.  An element $g\in G$ belongs to $\St_{\cA}(\Ga^*)$
if the graph $\Ga^*$ is isomorphic to $\Ga_{\Sch}^*(G,S;A,A_g(x))$.  An
isomorphism of these marked graphs is an automorphism of the unmarked graph
$\Ga$ that sends $x$ to $A_g(x)$.  Hence $g\in\St_{\cA}(\Ga^*)$ if and only
if $A_g(x)=\psi_g(x)$ for some $\psi_g\in R$.  By the above the
automorphism $\psi_g$ is uniquely determined by $A_g(x)$.  Now we define a
mapping $\Psi:\St_{\cA}(\Ga^*)\to R$ by $\Psi(g)=\psi_{g^{-1}}$.  It is
easy to observe that $\Psi$ maps $\St_{\cA}(\Ga^*)$ onto $R$ and the
preimage of the identity under $\Psi$ is $\St_A(x)$.  Further, for any
$g,h\in\St_{\cA}(\Ga^*)$ we have $\psi_{(gh)^{-1}}(x)=A_{gh}^{-1}(x)
=A_h^{-1}(A_g^{-1}(x))=A_h^{-1}(\psi_{g^{-1}}(x))$.  Recall that the
automorphism $\psi_{g^{-1}}$ commutes with the action $A$, in particular,
$A_h^{-1}\psi_{g^{-1}}=\psi_{g^{-1}}A_h^{-1}$.  Then $\psi_{(gh)^{-1}}(x)
=\psi_{g^{-1}}(A_h^{-1}(x))=\psi_{g^{-1}}(\psi_{h^{-1}}(x))$, which implies
that $\Psi(gh)=\Psi(g)\Psi(h)$.  Thus $\Psi$ is a homomorphism.

We proceed to the statement (iii).  Take any element $g\in\St_A(x)$.  It
can be represented as a product $s_1s_2\dots s_k$, where each $s_i$ is in
$S$.  Let $\ga$ denote the unique directed path in $\Ga^*$ with beginning
$x$ and code word $s_k\dots s_2s_1$.  By construction, the end of the path
$\ga$ is $A_g(x)$ so that the path is closed.  Let $\Ga_0^*$ denote the
subgraph of $\Ga^*$ whose vertex set consists of all vertices of the path
$\ga$.  Clearly, $\Ga_0^*$ is a marked graph, finite and connected.  Hence
$\Ga_0^*\in\MG_0$.  Any graph $\Ga_1^*\in\cU(\Ga_0^*,\emptyset)$ admits a
closed directed path with beginning at the marked point and code word
$s_k\dots s_2s_1$.  If $\Ga_1^*=\Ga_{\Sch}^*(G,S;B,y)$, this implies that
$B_g(y)=y$.  Hence $g\in\St_B(y)\subset
\St_{\cA}\bigl(\Ga_{\Sch}^*(G,S;B,y)\bigr)$.  Thus the transformation
$\cA_g$ fixes the set $\cU(\Ga_0^*,\emptyset)\cap\Sch(G,S)$, which is an
open neighborhood of the graph $\Ga^*$ in $\Sch(G,S)$.
\end{proof}

Any group $G$ acts naturally on itself by left multiplication.  The action
$\adj_G:G\times G\to G$, called {\em adjoint}, is given by
$\adj_G(g_0,g)=g_0g$.  The Schreier graph of this action relative to any
generating set $S$ is the {\em Cayley graph\/} of the group $G$ relative to
$S$.  Given a subgroup $H$ of $G$, the adjoint action of the group $G$
descends to an action on $G/H$.  The action $\adj_{G,H}:G\times G/H\to G/H$
is given by $\adj_{G,H}(g_0,gH)=(g_0g)H$.  The Schreier graph of the latter
action relative to a generating set $S$ is denoted $\Ga_{\coset}(G,S;H)$.
It is called a {\em Schreier coset graph}.  The {\em marked Schreier coset
graph\/} $\Ga_{\coset}^*(G,S;H)$ is the marked Schreier graph of the coset
$H$ under the action $\adj_{G,H}$.  It is obtained from
$\Ga_{\coset}(G,S;H)$ by marking the vertex $H$.

\begin{proposition}\label{sch4}
A marked Schreier graph $\Ga_{\Sch}^*(G,S;A,x)$ is isomorphic to a marked
Schreier coset graph $\Ga_{\coset}^*(G,S;H)$ if and only if $H=\St_A(x)$.
\end{proposition}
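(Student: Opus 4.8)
The plan is to reduce the whole statement to the correspondence recorded at the start of Section~\ref{sch}: a bijection of vertex sets is an isomorphism of Schreier graphs exactly when it is a conjugacy of the underlying actions, and for \emph{marked} Schreier graphs we additionally demand that it carry marked vertex to marked vertex. Under this dictionary, ``$\Ga_{\Sch}^*(G,S;A,x)$ is isomorphic to $\Ga_{\coset}^*(G,S;H)$'' becomes ``the restriction of $A$ to the orbit $O_A(x)$ is conjugate to the coset action $\adj_{G,H}$ on $G/H$ via a conjugacy matching $x$ with the identity coset $H$.'' Since $\Ga_{\Sch}^*(G,S;A,x)$ is by definition the Schreier graph of the restriction of $A$ to $O_A(x)$, the relevant action is transitive on $O_A(x)$, so the two directions are simply the orbit--stabilizer theorem phrased in this language.

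For the ``if'' direction I would assume $H=\St_A(x)$ and define $\pi:G/H\to O_A(x)$ by $\pi(gH)=A_g(x)$. First I would verify that $\pi$ is well defined and injective, which rests on the single equivalence $g^{-1}g'\in\St_A(x)\Leftrightarrow A_g(x)=A_{g'}(x)$; surjectivity onto $O_A(x)$ is immediate. The identity $\pi(\adj_{G,H}(g_0,gH))=A_{g_0g}(x)=A_{g_0}(\pi(gH))$ shows that $\pi$ intertwines the two actions, and $\pi(H)=A_{1_G}(x)=x$ sends the marked vertex of $\Ga_{\coset}^*(G,S;H)$ to that of $\Ga_{\Sch}^*(G,S;A,x)$. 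By the correspondence above, $\pi$ is then an isomorphism of the two marked Schreier graphs.

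For the ``only if'' direction, suppose $f$ is an isomorphism of $\Ga_{\coset}^*(G,S;H)$ with $\Ga_{\Sch}^*(G,S;A,x)$. Then $f:G/H\to O_A(x)$ is a conjugacy with $f(H)=x$, so evaluating the conjugacy relation at the identity coset gives $f(gH)=A_g(f(H))=A_g(x)$ for all $g\in G$. Using injectivity of $f$ I can then characterize $H$ intrinsically:
\[
g\in H\iff gH=H\iff f(gH)=f(H)\iff A_g(x)=x\iff g\in\St_A(x),
\]
so $H=\St_A(x)$, as required.

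I do not anticipate a genuine obstacle, as the argument is the orbit--stabilizer theorem dressed in Schreier-graph language. The only points needing care are the built-in reduction to the transitive case and the well-definedness of $\pi$, both of which hinge on the defining equivalence $g^{-1}g'\in\St_A(x)\Leftrightarrow A_g(x)=A_{g'}(x)$; once that is in hand, everything else is a direct verification that $\pi$ (respectively $f$) respects the edge-and-label structure through the conjugacy property.
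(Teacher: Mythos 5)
Your proposal is correct and follows essentially the same route as the paper: the ``if'' direction uses the identical map $gH\mapsto A_g(x)$ (well-defined and injective via $A_{g_1}(x)=A_{g_2}(x)\Leftrightarrow g_1H=g_2H$) together with the dictionary between Schreier-graph isomorphisms and conjugacies, and your ``only if'' chain of equivalences is exactly the paper's observation that a marked isomorphism is a conjugacy preserving the stabilizer of the marked vertex, the stabilizer of the coset $H$ under $\adj_{G,H}$ being $H$ itself.
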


\begin{proof}
Let $H_0$ denote the stabilizer $\St_A(x)$.  Suppose
$A_{g_1}(x)=A_{g_2}(x)$ for some $g_1,g_2\in G$.  Then $A_{g_2^{-1}g_1}(x)
=A_{g_2}^{-1}(A_{g_1}(x))=x$ so that $g_2^{-1}g_1\in H_0$.  Hence
$g_2^{-1}g_1H_0=H_0$ and $g_1H_0=g_2H_0$.  Conversely, if $g_1H_0=g_2H_0$
then $g_1=g_2h$ for some $h\in H_0$.  It follows that $A_{g_1}(x)
=A_{g_2}(A_h(x))=A_{g_2}(x)$.

Let us define a mapping $f:G/H_0\to O_A(x)$ by $f(gH_0)=A_g(x)$.  By the
above $f$ is well defined and one-to-one.  Clearly, it maps $G/H_0$ onto
the entire orbit $O_A(x)$.  For any $g_0,g\in G$ we have $f(g_0gH_0)
=A_{g_0g}(x)=A_{g_0}(A_g(x))=A_{g_0}(f(gH_0))$.  Therefore $f$ is a
conjugacy of the action $\adj_{G,H_0}$ with the restriction of the action
$A$ to the orbit $O_A(x)$.  It follows that $f$ is also an isomorphism of
the unmarked graphs $\Ga_{\coset}(G,S;H_0)$ and $\Ga_{\Sch}(G,S;A,x)$.  As
$f(H_0)=x$, the mapping $f$ is an isomorphism of the marked graphs
$\Ga_{\coset}^*(G,S;H_0)$ and $\Ga_{\Sch}^*(G,S;A,x)$ as well.

Since any isomorphism of Schreier graphs of the group $G$ is also a
conjugacy of the corresponding actions, it preserves stabilizers of
vertices.  In particular, marked Schreier graphs cannot be isomorphic if
the stabilizers of their marked vertices do not coincide.  For any subgroup
$H$ of $G$ the stabilizer of the coset $H$ under the action $\adj_{G,H}$ is
$H$ itself.  Therefore the graph $\Ga_{\coset}^*(G,S;H)$ is not isomorphic
to $\Ga_{\Sch}^*(G,S;A,x)$ if $H\ne\St_A(x)$.
\end{proof}

\section{Space of subgroups}\label{sub}

Let $G$ be a discrete countable group.  Denote by $\Sub(G)$ the set of all
subgroups of $G$.  We endow the set $\Sub(G)$ with a topology as follows.
First we consider the product topology on $\{0,1\}^G$.  The set $\{0,1\}^G$
is in a one-to-one correspondence with the set of all functions
$f:G\to\{0,1\}$.  Also, any subset $H\subset G$ (in particular, any
subgroup) is assigned the indicator function $\chi_H:G\to\{0,1\}$ defined
by
$$
\chi_H(g)=
\left\{
\begin{array}{l}
1 \mbox{ if } g\in H,\\[1mm] 0 \mbox{ if } g\notin H.
\end{array}
\right.
$$
This gives rise to a mapping $j:\Sub(G)\to\{0,1\}^G$, which is an
embedding.  Now the topology on $\Sub(G)$ is the smallest topology such
that the embedding $j$ is continuous.  By definition, the base of this
topology consists of sets of the form
$$
U_G(S^+,S^-)=
\{H\in\Sub(G) \mid S^+\subset H \mbox{ and } S^-\cap H=\emptyset\},
$$
where $S^+$ and $S^-$ run independently over all finite subsets of $G$.
Notice that $U_G(S^+_1,S^-_1)\cap U_G(S^+_2,S^-_2)=
U_G(S^+_1\cup S^+_2,S^-_1\cup S^-_2)$.

The topological space $\Sub(G)$ is ultrametric and compact (since
$\{0,1\}^G$ is ultrametric and compact, and $j(\Sub(G))$ is closed in
$\{0,1\}^G$).  Suppose $g_1,g_2,g_3,\dots$ is a complete list of elements
of the group $G$.  For any subgroups $H_1,H_2\subset G$ let $d(H_1,H_2)=0$
if $H_1=H_2$; otherwise let $d(H_1,H_2)=2^{-n}$, where $n$ is the smallest
index such that $g_n$ belongs to the symmetric difference of $H_1$ and
$H_2$.  Then $d$ is a distance function on $\Sub(G)$ compatible with the
topology.

Note that the above construction also applies to a finite group $G$, in
which case $\Sub(G)$ is a finite set with the discrete topology.

The following three lemmas explore properties of the topological space
$\Sub(G)$.

\begin{lemma}\label{sub1}
The intersection of subgroups is a continuous operation on the space
$\Sub(G)$.
\end{lemma}

\begin{proof}
We have to show that the mapping $I:\Sub(G)\times\Sub(G)\to\Sub(G)$ defined
by $I(H_1,H_2)=H_1\cap H_2$ is continuous.  Take any finite sets
$S^+,S^-\subset G$.  Given subgroups $H_1,H_2\subset G$, the intersection
$H_1\cap H_2$ is an element of the set $U_G(S^+,S^-)$ if and only if
$H_1\in U_G(S^+,S_1)$ and $H_2\in U_G(S^+,S_2)$ for some sets $S_1$ and
$S_2$ such that $S_1\cup S_2=S^-$.  Clearly, the sets $S_1$ and $S_2$ are
finite.  It follows that
$$
I^{-1}\bigl(U_G(S^+,S^-)\bigr)=\bigcup_{S_1,S_2\,:\,S_1\cup S_2=S^-}
U_G(S^+,S_1)\times U_G(S^+,S_2).
$$
It remains to notice that any open subset of $\Sub(G)$ is a union of sets
of the form $U_G(S^+,S^-)$ while any set of the form $U_G(S^+,S_1)\times
U_G(S^+,S_2)$ is open in $\Sub(G)\times\Sub(G)$.
\end{proof}

\begin{lemma}\label{sub2}
For any subgroups $H_1$ and $H_2$ of the group $G$, let $H_1\vee H_2$
denote the subgroup generated by all elements of $H_1$ and $H_2$.  Then
$\vee$ is a Borel measurable operation on $\Sub(G)$.
\end{lemma}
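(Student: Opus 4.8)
The plan is to show that the join operation $\vee$ is Borel measurable by expressing its "preimage" structure in terms of countably many Borel conditions. The key difficulty, compared with Lemma \ref{sub1} on intersection, is that membership $g\in H_1\vee H_2$ is not determined by finitely many coordinates of $H_1$ and $H_2$: an element $g$ lies in $H_1\vee H_2$ precisely when $g$ can be written as a finite product $h_1h_2\cdots h_m$ where each $h_i$ belongs to $H_1$ or to $H_2$. This is an existential condition over arbitrarily long words, so the natural description of $\{(H_1,H_2):g\in H_1\vee H_2\}$ is a countable union rather than a clopen set, which is why we can only claim Borel measurability and not continuity.

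First I would fix an arbitrary element $g\in G$ and analyze the set
$$
W_g=\{(H_1,H_2)\in\Sub(G)\times\Sub(G)\mid g\in H_1\vee H_2\}.
$$
Since the topology on $\Sub(G)$ is generated by a countable metric $d$, it suffices to show each $W_g$ is Borel; then $j\circ\vee$ is measurable coordinatewise (the coordinate indexed by $g$ is the indicator of $W_g$), hence $\vee$ is Borel measurable because the topology on $\Sub(G)$ is the one pulled back along the embedding $j$. To see that $W_g$ is Borel, I would write
$$
W_g=\bigcup_{w}\;\{(H_1,H_2)\mid w \mbox{ evaluates to } g,\ \mbox{each letter of } w \mbox{ lies in its designated factor}\},
$$
where $w$ ranges over the countably many finite words whose letters are tagged as coming from $H_1$ or from $H_2$. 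For a fixed tagged word $w=h_1h_2\cdots h_m$ with prescribed factor assignments and prescribed group elements $h_i\in G$, the condition that each $h_i$ lie in its designated subgroup is exactly a basic clopen condition of the form $h_i\in H_1$ (or $h_i\in H_2$), so the corresponding set is an intersection of finitely many clopen sets and hence clopen.

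The main technical point is to confirm that this really is a countable union. The collection of all finite tagged words over the alphabet $G$ is countable because $G$ is countable, so the indexing set is countable, and a countable union of clopen sets is Borel; thus each $W_g$ is Borel, completing the argument. The one step requiring care is making sure that "$g\in H_1\vee H_2$" is genuinely equivalent to the existence of such a tagged word evaluating to $g$ with all letters in their designated factors --- this follows from the standard description of the subgroup generated by $H_1\cup H_2$ as the set of all finite products of elements drawn from $H_1\cup H_2$, noting that inverses need not be handled separately since $H_1$ and $H_2$ are already closed under inversion. I expect this equivalence to be the only conceptual step; the measurability bookkeeping afterward is routine.
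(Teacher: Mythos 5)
Your proposal is correct and is essentially the paper's own argument: the paper likewise fixes $g\in G$, writes $g=h_1h_2\dots h_k$ with each $h_i$ in $H_1$ or $H_2$, and observes that the finite sets $S_1\subset H_1$, $S_2\subset H_2$ of letters yield a basic open (clopen) neighborhood $U_G(S_1,\emptyset)\times U_G(S_2,\emptyset)$ on which $g$ remains in the join --- exactly your tagged-word decomposition. The only cosmetic difference is that the paper notes this union is in fact open (so preimages of $U_G(\{g\},\emptyset)$ are open, of $U_G(\emptyset,\{g\})$ closed, and of $U_G(S^+,S^-)$ Borel), whereas you stop at ``countable union of clopen sets, hence Borel'' and finish via coordinatewise measurability into $\{0,1\}^G$; both routes are valid and rest on the same countability of finite words over the countable group $G$.
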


\begin{proof}
We have to show that the mapping $J:\Sub(G)\times\Sub(G)\to\Sub(G)$ defined
by $J(H_1,H_2)=H_1\vee H_2$ is Borel measurable.  Take any $g\in G$ and
consider arbitrary subgroups $H_1,H_2\in\Sub(G)$ such that $J(H_1,H_2)\in
U_G(\{g\},\emptyset)$, i.e., $H_1\vee H_2$ contains $g$.  The element $g$
can be represented as a product $g=h_1h_2\dots h_k$, where each $h_i$
belongs to $H_1$ or $H_2$.  Let $S_1$ denote the set of all elements of
$H_1$ in the sequence $h_1,h_2,\dots,h_k$ and $S_2$ denote the set of all
elements of $H_2$ in the same sequence.  Then the element $g$ belongs to
$K_1\vee K_2$ for any subgroups $K_1\in U_G(S_1,\emptyset)$ and $K_2\in
U_G(S_2,\emptyset)$.  Hence the pair $(H_1,H_2)$ is contained in the
preimage of $U_G(\{g\},\emptyset)$ under the mapping $J$ along with its
open neighborhood $U_G(S_1,\emptyset)\times U_G(S_2,\emptyset)$.  Thus the
preimage $J^{-1}\bigr(U_G(\{g\},\emptyset)\bigr)$ is an open set.  Since
the set $U_G(\emptyset,\{g\})$ is the complement of $U_G(\{g\},\emptyset)$,
its preimage under $J$ is closed.

Given finite sets $S^+,S^-\subset G$, the set $U_G(S^+,S^-)$ is the
intersection of sets $U_G(\{g\},\emptyset)$, $g\in S^+$ and
$U_G(\emptyset,\{h\})$, $h\in S^-$.  By the above
$J^{-1}\bigr(U_G(S^+,S^-)\bigr)$ is a Borel set, the intersection of an
open set with a closed one.  Finally, any open subset of $\Sub(G)$ is the
union of some sets $U_G(S^+,S^-)$.  Moreover, it is a finite or countable
union since there are only countably many sets of the form $U_G(S^+,S^-)$.
It follows that the preimage under $J$ of any open set is a Borel set.
\end{proof}

\begin{lemma}\label{sub3}
Suppose $H$ is a subgroup of $G$.  Then $\Sub(H)$ is a closed subset of
$\Sub(G)$.  Moreover, the intrinsic topology on $\Sub(H)$ coincides with
the topology induced by $\Sub(G)$.
\end{lemma}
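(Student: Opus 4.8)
The plan is to prove the two assertions of Lemma~\ref{sub3} in turn. First I would show that $\Sub(H)$ is a closed subset of $\Sub(G)$. The natural inclusion identifies a subgroup of $H$ with the same set regarded as a subgroup of $G$, so $\Sub(H)$ sits inside $\Sub(G)$ as the collection of those subgroups $K$ of $G$ that happen to be contained in $H$. The key observation is that $K\in\Sub(H)$ if and only if $K\cap(G\setminus H)=\emptyset$, i.e., no element outside $H$ lies in $K$. I would write this as an intersection of basic open--closed conditions: enumerating $G\setminus H=\{h_1,h_2,\dots\}$, the set of subgroups avoiding all these elements is $\bigcap_i U_G(\emptyset,\{h_i\})$. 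Each $U_G(\emptyset,\{h_i\})$ is clopen in $\Sub(G)$ (its complement $U_G(\{h_i\},\emptyset)$ is also basic open), so the intersection is closed, establishing that $\Sub(H)$ is closed.

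Next I would verify that the intrinsic topology of $\Sub(H)$ — the one defined by the same construction applied to $H$ in place of $G$, with base sets $U_H(S^+,S^-)$ where $S^+,S^-$ range over finite subsets of $H$ — coincides with the subspace topology inherited from $\Sub(G)$. The subspace topology has a base consisting of the traces $U_G(S^+,S^-)\cap\Sub(H)$, where $S^+,S^-$ are finite subsets of $G$. The clean way forward is to compare these two bases directly. For the easy direction, given finite $S^+,S^-\subset H$, the set $U_H(S^+,S^-)$ equals $U_G(S^+,S^-)\cap\Sub(H)$, since membership of a subgroup $K\subset H$ is tested by the same inclusion and disjointness conditions regardless of the ambient group. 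For the reverse direction I would take a finite $S^+,S^-\subset G$ and intersect $U_G(S^+,S^-)$ with $\Sub(H)$; here the point is that any element of $S^+$ lying outside $H$ forces the trace to be empty (no subgroup of $H$ contains it), while elements of $S^-$ outside $H$ impose no constraint on subgroups of $H$. Discarding those, one is left with $U_H(S^+\cap H,\,S^-\cap H)$, which is a basic open set in the intrinsic topology. Hence the two bases generate the same topology.

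The main obstacle, such as it is, will be handling the bookkeeping in the reverse direction cleanly — specifically making the case analysis on whether elements of $S^+$ and $S^-$ lie inside or outside $H$ fully explicit, so that the trace $U_G(S^+,S^-)\cap\Sub(H)$ is correctly identified as either empty or a basic intrinsic open set $U_H(S^+\cap H,S^-\cap H)$. This is not deep, but it is the only place where the ambient group and the subgroup genuinely interact, and it is where a careless argument could slip. Everything else reduces to the fact that the defining conditions $S^+\subset K$ and $S^-\cap K=\emptyset$ are purely set-theoretic and insensitive to the ambient group.

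\begin{proof}
Every subgroup of $H$ is also a subgroup of $G$, which gives a natural inclusion of $\Sub(H)$ into $\Sub(G)$: a subgroup $K$ of $H$ is identified with the same subset of $G$.  Under this identification, $\Sub(H)$ consists exactly of those $K\in\Sub(G)$ with $K\subset H$, equivalently, $K\cap(G\setminus H)=\emptyset$.  Enumerate the complement as $G\setminus H=\{h_1,h_2,\dots\}$ (a finite or countable set).  Then
$$
\Sub(H)=\bigcap_i U_G(\emptyset,\{h_i\}).
$$
Each set $U_G(\emptyset,\{h_i\})$ is closed in $\Sub(G)$, being the complement of the basic open set $U_G(\{h_i\},\emptyset)$.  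Hence $\Sub(H)$ is an intersection of closed sets, so it is closed in $\Sub(G)$.

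It remains to compare the intrinsic topology on $\Sub(H)$ with the topology induced by $\Sub(G)$.  The intrinsic topology has a base formed by the sets $U_H(S^+,S^-)$, where $S^+$ and $S^-$ run over all finite subsets of $H$.  The induced topology has a base formed by the traces $U_G(T^+,T^-)\cap\Sub(H)$, where $T^+$ and $T^-$ run over all finite subsets of $G$.  We show that these two bases generate the same topology.

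First take finite sets $S^+,S^-\subset H$.  For any subgroup $K\subset H$ the conditions $S^+\subset K$ and $S^-\cap K=\emptyset$ do not depend on whether $K$ is regarded as a subgroup of $H$ or of $G$.  Therefore $U_H(S^+,S^-)=U_G(S^+,S^-)\cap\Sub(H)$, so every basic intrinsic open set is open in the induced topology.

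Conversely, take finite sets $T^+,T^-\subset G$ and consider the trace $U_G(T^+,T^-)\cap\Sub(H)$.  If some element of $T^+$ lies outside $H$, then no subgroup of $H$ can contain it, so the trace is empty.  Otherwise $T^+\subset H$.  An element of $T^-$ that lies outside $H$ imposes no constraint on a subgroup $K\subset H$, since automatically $K$ avoids it; thus such elements may be discarded.  After discarding them we are left with the condition defined by $T^+$ and $T^-\cap H$, which gives
$$
U_G(T^+,T^-)\cap\Sub(H)=U_H(T^+,\,T^-\cap H).
$$
Hence every basic open set of the induced topology is either empty or a basic intrinsic open set.  We conclude that the two topologies on $\Sub(H)$ coincide.
\end{proof}
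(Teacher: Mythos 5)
Your proof is correct and follows essentially the same route as the paper: closedness via writing $\Sub(H)$ as the intersection of the clopen sets $U_G(\emptyset,\{g\})$ over $g\in G\setminus H$, and the topology comparison via the identity $U_G(S^+,S^-)\cap\Sub(H)=U_H(S^+,S^-\cap H)$ when $S^+\subset H$ (empty trace otherwise). Your version merely spells out both directions of the base comparison that the paper compresses into one sentence.
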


\begin{proof}
The intrinsic topology on $\Sub(H)$ is generated by all sets of the form
$U_H(P^+,P^-)$, where $P^+$ and $P^-$ are finite subsets of $H$.  The
topology induced by $\Sub(G)$ is generated by all sets of the form
$U_G(S^+,S^-)\cap\Sub(H)$, where $S^+$ and $S^-$ are finite subsets of $G$.
Clearly, $U_G(S^+,S^-)\cap\Sub(H)=U_H(S^+,S^-\cap H)$ if $S^+\subset H$ and
$U_G(S^+,S^-)\cap\Sub(H)=\emptyset$ otherwise.  It follows that the two
topologies coincide.

For any $g\in G$ the open set $U_G(\emptyset,\{g\})$ is also closed in
$\Sub(G)$ as it is the complement of another open set
$U_G(\{g\},\emptyset)$.  Then the set $\Sub(H)$ is closed in $\Sub(G)$
since it is the intersection of closed sets $U_G(\emptyset,\{g\})$ over all
$g\in G\setminus H$.
\end{proof}

Let $A$ be an action of the group $G$ on a set $M$.  Let us consider the
stabilizer $\St_A(x)$ of a point $x\in M$ under the action (see Section
\ref{act}) as the value of a mapping $\St_A:M\to\Sub(G)$.

\begin{lemma}\label{sub4}
Suppose $A$ is a continuous action of the group $G$ on a Hausdorff
topological space $M$.  Then
\begin{itemize}
\item[(i)]
the mapping $\St_A$ is Borel measurable;
\item[(ii)]
$\St_A$ is continuous at a point $x\in M$ if and only if the stabilizer of
$x$ under the action coincides with its neighborhood stabilizer:
$\St^o_A(x)=\St_A(x)$;
\item[(iii)]
if a sequence of points in $M$ converges to the point $x$ and the sequence
of their stabilizers converges to a subgroup $H$, then $\St^o_A(x)\subset
H\subset\St_A(x)$.
\end{itemize}
\end{lemma}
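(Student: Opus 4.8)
The plan is to unwind the definitions of all three stabilizer notions in terms of the base sets $U_G(S^+,S^-)$ that generate the topology on $\Sub(G)$, and then exploit the countability of $G$ together with the continuity of the action $A$. Throughout, I would write $g\in G$ as a fixed element and ask when the condition ``$g$ fixes $x$'' propagates to nearby points. The governing principle, which I would establish first as a small observation, is: for a fixed $g\in G$, the set $\{y\in M\mid A_g(y)=y\}=\Fix(A_g)$ is closed (since $A_g$ is continuous and $M$ is Hausdorff, it is the equalizer of $A_g$ and the identity), and its interior is exactly the set of points whose neighborhood stabilizer contains $g$. This single fact drives all three parts.

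For part (ii), I would argue as follows. The map $\St_A$ is continuous at $x$ precisely when, for every basic neighborhood $U_G(S^+,S^-)$ containing $\St_A(x)$, the preimage contains a neighborhood of $x$. Since $S^+\subset\St_A(x)$ is finite, each $g\in S^+$ fixes $x$; continuity at $x$ then requires that each such $g$ continue to fix all points in some neighborhood, i.e. $g\in\St^o_A(x)$. Conversely, the negative constraints $S^-\cap\St_A(x)=\emptyset$ are automatically stable: if $g\notin\St_A(x)$ then $A_g(x)\ne x$, and by the Hausdorff property $A_g(y)\ne y$ for all $y$ near $x$, so $g$ stays out of nearby stabilizers. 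Hence continuity at $x$ is equivalent to every element of $\St_A(x)$ lying in $\St^o_A(x)$, which (since $\St^o_A(x)\subset\St_A(x)$ always, as noted in Section~\ref{act}) is exactly the condition $\St^o_A(x)=\St_A(x)$.

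For part (iii), suppose $x_k\to x$ and $\St_A(x_k)\to H$ in $\Sub(G)$. To show $H\subset\St_A(x)$, I take $g\in H$; by convergence $g\in\St_A(x_k)$ for all large $k$, so $A_g(x_k)=x_k$, and passing to the limit using continuity of $A_g$ and the Hausdorff property gives $A_g(x)=x$, i.e. $g\in\St_A(x)$. For the inclusion $\St^o_A(x)\subset H$, I take $g\in\St^o_A(x)$; then $g$ fixes a whole neighborhood of $x$, so $A_g(x_k)=x_k$ for all large $k$, whence $g\in\St_A(x_k)$ eventually, and convergence in $\Sub(G)$ forces $g\in H$.

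Part (i), the Borel measurability of $\St_A$, I would reduce to showing that $\St_A^{-1}(U_G(S^+,S^-))$ is Borel for every finite $S^+,S^-$. Intersecting over finitely many generators, it suffices to treat the single-element cases. The preimage $\St_A^{-1}\bigl(U_G(\{g\},\emptyset)\bigr)=\{x\mid A_g(x)=x\}=\Fix(A_g)$ is closed, and $\St_A^{-1}\bigl(U_G(\emptyset,\{g\})\bigr)$ is its open complement; thus each basic preimage is a finite intersection of closed and open sets, hence Borel. Since $G$ is countable there are only countably many basic sets $U_G(S^+,S^-)$, so every open subset of $\Sub(G)$ is a countable union of them, and its preimage is Borel. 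The only mild subtlety, which I expect to be the main point requiring care rather than a genuine obstacle, is the repeated use of the Hausdorff hypothesis to turn $A_g(x)\ne x$ into a statement that holds on a neighborhood; this is what separates the behavior of the positive and negative constraints and is the crux of the equivalence in part (ii).
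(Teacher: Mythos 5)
Your proposal is correct and follows essentially the same route as the paper's proof: both rest on the single observation that each fixed-point set $\Fix_A(g)$ is closed (Hausdorff plus continuity of $A_g$), reduce continuity of $\St_A$ at $x$ to $x$ being interior to every $\Fix_A(g)$ containing it while noting the negative constraints $S^-$ are automatically stable because the complements of the $\Fix_A(g)$ are open, and obtain part (iii) from closedness of $\Fix_A(g)$ together with the definition of convergence in $\Sub(G)$. The minor differences (equalizer phrasing, explicit mention of countability of the base in part (i), contrapositive form of the inclusion $\St^o_A(x)\subset H$) are purely stylistic.
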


\begin{proof}
For any $g\in G$ let $\Fix_A(g)$ denote the set of all points in $M$ fixed
by the transformation $A_g$.  Let us show that $\Fix_A(g)$ is a closed set.
Take any point $x\in M$ not in $\Fix_A(g)$.  Since the points $x$ and
$A_g(x)$ are distinct, they have disjoint open neighborhoods $X$ and $Y$,
respectively.  Since $A_g$ is continuous, there exists an open neighborhood
$Z$ of $x$ such that $A_g(Z)\subset Y$.  Then $X\cap Z$ is an open
neighborhood of $x$ and $A_g(X\cap Z)$ is disjoint from $X\cap Z$.  In
particular, $X\cap Z$ is disjoint from $\Fix_A(g)$.

For any finite sets $S^+,S^-\subset G$ the preimage of the open set
$U_G(S^+,S^-)$ under the mapping $\St_A$ is
$$
\bigcap_{g\in S^+}\Fix\nolimits_A(g)\setminus
\bigcup_{h\in S^-}\Fix\nolimits_A(h).
$$
This is a Borel set as $\Fix_A(g)$ is closed for any $g\in G$.  Since sets
of the form $U_g(S^+,S^-)$ constitute a base of the topology on $\Sub(G)$,
the mapping $\St_A$ is Borel measurable.

The mapping $\St_A$ is continuous at a point $x\in M$ if and only if $x$ is
an interior point in the preimage under $\St_A$ of any set $U_G(S^+,S^-)$
containing $\St_A(x)$.  The latter holds true if and only if $x$ is an
interior point in any set $\Fix_A(g)$ containing this point.  Clearly, $x$
is an interior point of $\Fix_A(g)$ if and only if $g$ belongs to the
neighborhood stabilizer $\St_A^o(x)$.  Thus $\St_A$ is continuous at $x$ if
and only if any element of $\St_A(x)$ belongs to $\St_A^o(x)$ as well.

Now suppose that a sequence $x_1,x_2,\dots$ of points in $M$ converges to
the point $x$ and, moreover, the stabilizers $\St_A(x_1),\St_A(x_2),\dots$
converge to a subgroup $H$.  Consider an arbitrary $g\in G$.  In the case
$g\in H$, the subgroup $H$ belongs to the open set $U_G(\{g\},\emptyset)$.
Since $\St_A(x_n)\to H$ as $n\to\infty$, we have $\St_A(x_n)\in
U_G(\{g\},\emptyset)$ for large $n$.  In other words, $x_n\in\Fix_A(g)$ for
large $n$.  Since the set $\Fix_A(g)$ is closed, it contains the limit
point $x$ as well.  That is, $g\in\St_A(x)$.  In the case $g\notin H$, the
subgroup $H$ belongs to the open set $U_G(\emptyset,\{g\})$.  Then
$\St_A(x_n)\in U_G(\emptyset,\{g\})$ for large $n$.  In other words,
$x_n\notin\Fix_A(g)$ for large $n$.  Since $x_n\to x$ as $n\to\infty$, the
action of $g$ fixes no neighborhood of $x$.  That is, $g\notin\St^o_A(x)$.
\end{proof}

The group $G$ acts naturally on the set $\Sub(G)$ by conjugation.  The
action $\cC:G\times\Sub(G)\to\Sub(G)$ is given by $\cC(g,H)=gHg^{-1}$.
This action is continuous.  Indeed, one easily observes that
$\cC_g^{-1}\bigl(U_G(S_1,S_2)\bigr)=U_G(g^{-1}S_1g,g^{-1}S_2g)$ for all
$g\in G$ and finite sets $S_1,S_2\subset G$.

\begin{proposition}\label{sub5}
The action $\cC$ of the group $G$ on $\Sub(G)$ is continuously conjugated
to the action $\cA$ on the space $\Sch(G,S)$ of the marked Schreier graphs
of $G$ relative to a generating set $S$.  Moreover, the mapping
$f:\Sub(G)\to\Sch(G,S)$ given by $f(H)=\Ga_{\coset}^*(G,S;H)$ is a
continuous conjugacy.
\end{proposition}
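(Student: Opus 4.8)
The plan is to show that the map $f(H)=\Ga_{\coset}^*(G,S;H)$ is a bijection which intertwines the two actions and is a homeomorphism, since $\Sub(G)$ is compact. I would first establish that $f$ is well defined: by construction $\Ga_{\coset}^*(G,S;H)$ is a marked Schreier graph of $G$, hence an element of $\Sch(G,S)$. Next I would verify that $f$ conjugates the actions, i.e. that $\cA_g\circ f=f\circ\cC_g$ for every $g\in G$. By Proposition \ref{sch4} applied to the adjoint action $\adj_{G,H}$ on $G/H$, the marked vertex $H$ has stabilizer $H$ itself; changing the marked vertex to $gH$ via $\cA_g$ produces the marked Schreier graph with marked vertex whose stabilizer is the conjugate $gHg^{-1}$. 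That same Proposition then identifies this graph with $\Ga_{\coset}^*(G,S;gHg^{-1})=f(\cC_g(H))$, giving the intertwining relation.

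I would then prove that $f$ is a bijection. Injectivity follows from Proposition \ref{sch4}: two marked Schreier coset graphs $\Ga_{\coset}^*(G,S;H_1)$ and $\Ga_{\coset}^*(G,S;H_2)$ are isomorphic if and only if the stabilizers of their marked vertices coincide, and the stabilizer of the marked coset in $\Ga_{\coset}^*(G,S;H)$ is exactly $H$; hence $f(H_1)=f(H_2)$ forces $H_1=H_2$. For surjectivity, given any marked Schreier graph $\Ga_{\Sch}^*(G,S;A,x)\in\Sch(G,S)$, set $H=\St_A(x)$; Proposition \ref{sch4} gives that this graph is isomorphic to $\Ga_{\coset}^*(G,S;H)=f(H)$, so $f$ is onto.

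It remains to show $f$ is a homeomorphism, and this is the step I expect to require the most care. Since $\Sub(G)$ is compact (as noted in Section \ref{sub}) and $\Sch(G,S)$ is Hausdorff (being metric, by Proposition \ref{sch2}), a continuous bijection is automatically a homeomorphism, so it suffices to prove $f$ is continuous. I would argue continuity directly from the bases of the two topologies. A basic open set in $\Sch(G,S)$ is $\cU(\Ga_0,V_0)\cap\Sch(G,S)$, and by Lemma \ref{graph3} its preimage is determined by a finite portion of the coset graph, namely a closed ball $\oB_{\Ga_{\coset}(G,S;H)}(H,n)$ together with which relators of bounded length close up. The key observation is that this finite ball, up to isomorphism, depends only on the membership of finitely many elements of $G$ in $H$: a vertex $gH$ at distance at most $n$ from the marked vertex corresponds to a word of length at most $n$ in $S$, and two such words $w,w'$ give the same vertex precisely when $w(w')^{-1}\in H$. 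Thus the condition that $\oB_{\Ga_{\coset}(G,S;H)}(H,n)$ be isomorphic to a fixed finite graph is equivalent to finitely many conditions of the form $g\in H$ or $g\notin H$, which is exactly membership in a basic open set $U_G(S^+,S^-)$ of $\Sub(G)$. Hence the preimage under $f$ of any basic open set is open, and $f$ is continuous. The main obstacle is making the correspondence between finite balls in the coset graph and membership conditions in $\Sub(G)$ precise, in particular checking that both the vertices within distance $n$ and the edges among them are governed by finitely many membership relations, so that isomorphism type of the ball is locally constant on $\Sub(G)$.
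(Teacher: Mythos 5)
Your proposal is correct, and all the ingredients except the last step match the paper: well-definedness, the intertwining relation $\cA_g\circ f=f\circ\cC_g$ via the stabilizer computation $\St(gH)=gHg^{-1}$, and bijectivity via Proposition \ref{sch4} are exactly the paper's steps. Where you genuinely diverge is the topological part. You prove that $f$ is \emph{continuous} directly -- the preimage of a basic set $\cU(\Ga_0,V_0)\cap\Sch(G,S)$ is open in $\Sub(G)$ because the isomorphism type of the radius-$n$ ball of $\Ga_{\coset}(G,S;H)$ is determined by finitely many conditions of the form $g\in H$ or $g\notin H$ -- and then invoke compactness of $\Sub(G)$ plus the Hausdorff property of $\Sch(G,S)$ to upgrade the continuous bijection to a homeomorphism. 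The paper runs the dual argument: it proves that $f$ is an \emph{open} map, showing that $f\bigl(U_G(S^+,S^-)\bigr)$ is open by fixing, for each $g\in S^+\cup S^-$, a word representing $g$ and the corresponding path $\ga_g$ in the Schreier graph; the finite subgraph $\Ga_0$ spanned by these paths then witnesses the same closed/non-closed pattern for every graph in $\cU(\Ga_0,\emptyset)\cap\Sch(G,S)$, so the whole neighborhood lies in the image. The paper then deduces continuity of $f$ from compactness of both spaces. The paper's direction is somewhat lighter on bookkeeping, since it only tracks finitely many paths and never needs to control which identifications occur among ball vertices; your direction requires the "ball type is locally constant on $\Sub(G)$" analysis you flag as the main obstacle, but it is more self-contained in that it never needs $\Sch(G,S)$ to be compact, only $\Sub(G)$. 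One small slip to fix when you make that analysis precise: with the paper's left-coset convention $\adj_{G,H}(g_0,gH)=(g_0g)H$, two words $w,w'$ label the same vertex precisely when $(w')^{-1}w\in H$, not when $w(w')^{-1}\in H$ (the latter is the right-coset criterion); the structure of your argument is unaffected.
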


\begin{proof}
Proposition \ref{sch4} implies that the mapping $f$ is bijective.

Consider arbitrary element $g$ and subgroup $H$ of the group $G$.  The
stabilizer of the coset $gH$ under the action $\adj_{G,H}$ consists of
those $g_0\in G$ for which $g_0gH=gH$.  The latter condition is equivalent
to $g^{-1}g_0g\in H$.  Therefore the stabilizer is $gHg^{-1}=\cC_g(H)$.
As $\cA_g\bigl(\Ga_{\coset}^*(G,S;H)\bigr)
=\Ga_{\Sch}^*(G,S;\adj_{G,H},gH)$, it follows from Proposition \ref{sch4}
that $\cA_g(f(H))=f(\cC_g(H))$.  Thus $f$ conjugates the action $\cC$ with
$\cA$.

Now we are going to show that for any finite sets $S^+,S^-\subset G$ the
image of the open set $U_G(S^+,S^-)$ under the mapping $f$ is open in
$\Sch(G,S)$.  Let $\Ga_{\Sch}^*(G,S;A,x)$ be an arbitrary graph in that
image.  Any element $g\in G$ can be represented as a product $s_1s_2\dots
s_k$, where $s_i\in S$.  Let us fix such a representation and denote by
$\ga_g$ the unique directed path in $\Ga_{\Sch}^*(G,S;A,x)$ with beginning
$x$ and code word $s_k\dots s_2s_1$.  Then the end of the path $\ga_g$ is
$A_g(x)$.  In particular, the path $\ga_g$ is closed if and only if
$g\in\St_A(x)$.  By Proposition \ref{sch4}, the preimage of the graph
$\Ga_{\Sch}^*(G,S;A,x)$ under $f$ is $\St_A(x)$.  Since $\St_A(x)\in
U_G(S^+,S^-)$, the path $\ga_g$ is closed for $g\in S^+$ and not closed for
$g\in S^-$.  Let $\Ga_0$ denote the smallest subgraph of
$\Ga_{\Sch}^*(G,S;A,x)$ containing all paths $\ga_g$, $g\in S^+\cup S^-$.
Clearly, $\Ga_0$ is a marked graph, finite and connected.  Hence
$\Ga_0\in\MG_0$.  For any marked Schreier graph $\Ga_{\Sch}^*(G,S;B,y)$ in
$\cU(\Ga_0,\emptyset)$, the directed path with beginning $y$ and the same
code word as in $\ga_g$ is closed for all $g\in S^+$ and not closed for all
$g\in S^-$.  It follows that $\St_B(y)\in U_G(S^+,S^-)$.  Therefore the
graph $\Ga_{\Sch}^*(G,S;A,x)$ is contained in $f(U_G(S^+,S^-))$ along with
its neighborhood $\cU(\Ga_0,\emptyset)\cap\Sch(G,S)$.

Any open set in $\Sub(G)$ is the union of some sets $U_G(S^+,S^-)$.  Hence
it follows from the above that the mapping $f$ maps open sets onto open
sets.  In other words, the inverse mapping $f^{-1}$ is continuous.  Since
the topological spaces $\Sub(G)$ and $\Sch(G,S)$ are compact, $f$ is
continuous as well.
\end{proof}

Proposition \ref{sub5} allows for a short (although not constructive) proof
of the following statement.

\begin{proposition}\label{sub6}
Any subgroup of finite index of a finitely generated group is also finitely
generated.
\end{proposition}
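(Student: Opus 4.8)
The plan is to use the continuous conjugacy $f:\Sub(G)\to\Sch(G,S)$ from Proposition \ref{sub5} to transport the question from the space of subgroups to the space of marked Schreier graphs, where finiteness of the index manifests as finiteness of a single graph. Fix a finite symmetric generating set $S$ for $G$ and let $H$ be a subgroup of finite index $n=[G:H]$. The vertices of the Schreier coset graph $\Ga_{\coset}(G,S;H)$ are exactly the cosets $gH$, so this graph has precisely $n$ vertices and is therefore a finite graph; hence so is the marked graph $\Ga_{\coset}^*(G,S;H)=f(H)$.

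Next I would invoke the observation recorded when the topology on $\MG_0$ was introduced, namely that every finite graph in $\MG_0$ is an isolated point, because $\cU(\Ga_0,V_0)$ collapses to the single graph $\Ga_0$ when $V_0$ is the whole vertex set of $\Ga_0$. Thus $f(H)$ is an isolated point of $\Sch(G,S)$. Since $f$ is a continuous bijection, the preimage of the open singleton $\{f(H)\}$ is open, so $\{H\}$ is open in $\Sub(G)$; that is, $H$ is an isolated point. By the definition of the topology on $\Sub(G)$, there then exist finite sets $S^+,S^-\subset G$ with $U_G(S^+,S^-)=\{H\}$, and necessarily $S^+\subset H$ and $S^-\cap H=\emptyset$.

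Finally I would read off a finite generating set from $S^+$. Let $H_0$ be the subgroup generated by the finite set $S^+$; it is finitely generated by construction, and $H_0\subset H$ because $S^+\subset H$. The step to handle with care is exactly the passage from \emph{isolated} to \emph{finitely generated}: I must verify that the positive part $S^+$ of an isolating neighborhood already generates all of $H$. The key is that the negative constraints are inherited automatically, since $S^-\cap H_0\subset S^-\cap H=\emptyset$ as $H_0\subset H$, while $S^+\subset H_0$ by construction. Hence $H_0\in U_G(S^+,S^-)=\{H\}$, which forces $H_0=H$, so $H$ is generated by the finite set $S^+$. Everything beyond this observation is a direct application of the compactness and conjugacy results already established, which is why the argument is short but gives no explicit description of the generators.
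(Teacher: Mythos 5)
Your proof is correct and follows essentially the same route as the paper's: transport $H$ via the homeomorphism of Proposition \ref{sub5} to the finite marked Schreier coset graph, use the fact that finite graphs are isolated in $\MG_0$ to conclude $H$ is isolated in $\Sub(G)$, and then show the subgroup generated by $S^+$ lies in $U_G(S^+,S^-)=\{H\}$. The only cosmetic difference is that you spell out explicitly why the negative constraints $S^-$ are automatically satisfied by $H_0\subset H$, which the paper states more tersely.
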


\begin{proof}
Suppose $G$ is a finitely generated group and $H$ is a subgroup of $G$ of
finite index.  Let $S$ be a finite symmetric generating set for $G$.  By
Proposition \ref{sub5}, the space $\Sub(G)$ of subgroups of $G$ is
homeomorphic to the space $\Sch(G,S)$ of marked Schreier graphs of $G$
relative to the generating set $S$.  Moreover, there is a homeomorphism
that maps the subgroup $H$ to the marked Schreier coset graph
$\Ga_{\coset}^*(G,S;H)$.  The vertices of the graph are cosets of $H$ in
$G$.  Since $H$ has finite index in $G$, the graph $\Ga_{\coset}^*(G,S;H)$
is finite.  Notice that any finite graph in the topological space $\MG_0$,
which contains $\Sch(G,S)$, is an isolated point.  It follows that $H$ is
an isolated point in $\Sub(G)$.  Then there exist finite sets
$S^+,S^-\subset G$ such that $H$ is the only element of the open set
$U_G(S^+,S^-)$.  Let $H_0$ be the subgroup of $G$ generated by the finite
set $S^+$.  Since $S^+\subset H$ and $S^-\cap H=\emptyset$, the subgroup
$H_0$ is disjoint from $S^-$.  Thus $H_0\in U_G(S^+,S^-)$ so that $H_0=H$.
\end{proof}

\section{Automorphisms of regular rooted trees}\label{tree}

Consider an arbitrary graph $\Ga$.  Let $\ga$ be a path in this graph,
$v_0,v_1,\dots,v_m$ be consecutive vertices of $\ga$, and $e_1,\dots,e_m$
be consecutive edges.  A {\em backtracking\/} in the path $\ga$ occurs if
$e_{i+1}=e_i$ for some $i$ (then $v_{i+1}=v_{i-1}$).  The graph $\Ga$ is
called a {\em tree\/} if it is connected and admits no closed path of
positive length without backtracking.  In particular, this means no loops
and no multiple edges.  A {\em rooted tree\/} is a tree with a
distinguished vertex called the {\em root}.  Clearly, the root is a synonym
for the marked vertex.  For any integer $n\ge0$ the {\em level\/} $n$ (or
the $n$th level) of the tree is defined as the set of vertices at distance
$n$ from the root.  If $n\ge1$ then any vertex on the $n$th level is joined
to exactly one vertex on the level $n-1$ and, optionally, to some vertices
on the level $n+1$.  The rooted tree is called {\em $k$-regular\/} if every
vertex on any level $n$ is joined to exactly $k$ vertices on level $n+1$.
The $2$-regular rooted tree is also called {\em binary}.

All $k$-regular rooted trees are isomorphic to each other.  A standard
model of such a tree is built as follows.  Let $X$ be a set of cardinality
$k$ referred to as the {\em alphabet\/} (usually $X=\{0,1,\dots,k-1\}$).  A
{\em word\/} (or {\em finite word\/}) in the alphabet $X$ is a finite
string of elements from $X$ (referred to as {\em letters\/}).  The set of
all words in the alphabet $X$ is denoted $X^*$.  $X^*$ is a monoid with
respect to the concatenation (the unit element is the empty word, denoted
$\varnothing$).  Moreover, it is the free monoid generated by elements of
$X$.  Now we define a plain graph $\cT$ with the vertex set $X^*$ in which
two vertices $w_1$ and $w_2$ are joined by an edge if $w_1=w_2x$ or
$w_2=w_1x$ for some $x\in X$.  Then $\cT$ is a $k$-regular rooted tree with
the root $\varnothing$.  The $n$th level of the tree $\cT$ consists of all
words of length $n$.

A bijection $f:X^*\to X^*$ is an automorphism of the rooted tree $\cT$ if
and only if it preserves the length of any word and the length of the
common beginning of any two words.  Given an automorphism $f$ and a word
$u\in X^*$, there exists a unique transformation $h:X^*\to X^*$ such that
$f(uw)=f(u)h(w)$ for all $w\in X^*$.  It is easy to see that $h$ is also an
automorphism of the tree $\cT$.  This automorphism is called the {\em
section\/} of $f$ at the word $u$ and denoted $f|_u$.  A set of
automorphisms of the tree $\cT$ is called {\em self-similar\/} if it is
closed under taking sections.  For any automorphisms $f$ and $h$ and any
word $u\in X^*$ one has $(fh)|_u=f|_{h(u)}h|_u$ and $f^{-1}|_u
=\bigl(f|_{f^{-1}(u)}\bigr)^{-1}$.  It follows that any group of
automorphisms generated by a self-similar set is itself self-similar.

Suppose $G$ is a group of automorphisms of the tree $\cT$.  Let $\al$
denote the natural action of $G$ on the vertex set $X^*$.  Given a word
$u\in X^*$, the section mapping $g\mapsto g|_u$ is a homomorphism when
restricted to the stabilizer $\St_\al(u)$.  If $G$ is self-similar then
this is a homomorphism to $G$.  The self-similar group $G$ is called {\em
self-replicating\/} if for any $u\in X^*$ the mapping $g\mapsto g|_u$ maps
the subgroup $\St_\al(u)$ onto the entire group $G$.

Suppose that letters of the alphabet $X$ are canonically ordered:
$x_1,x_2,\dots,x_k$.  For any permutation $\pi$ on $X$ and automorphisms
$h_1,h_2,\dots,h_k$ of the tree $\cT$ we denote by $\pi(h_1,h_2,\dots,h_k)$
a transformation $f:X^*\to X^*$ given by $f(x_iw)=\pi(x_i)h_i(w)$ for all
$w\in X^*$ and $1\le i\le k$.  It is easy to observe that $f$ is also an
automorphism of $\cT$ and $h_i=f|_{x_i}$ for $1\le i\le k$.  The expression
$\pi(h_1,h_2,\dots,h_k)$ is called the {\em wreath recursion\/} for $f$.
Any self-similar set of automorphisms $f_j$, $j\in J$ satisfies a system of
``self-similar'' wreath recursions
$$
f_j=\pi_j(f_{m(j,x_1)},f_{m(j,x_2)},\dots,f_{m(j,x_k)}), \ j\in J,
$$
where $\pi_j$, $j\in J$ are permutations on $X$ and $m$ maps $J\times X$ to
$J$.

\begin{lemma}\label{tree0}
Any system of self-similar wreath recursions over the alphabet $X$ is
satisfied by a unique self-similar set of automorphisms of the regular
rooted tree $\cT$.
\end{lemma}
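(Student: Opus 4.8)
The plan is to construct the family $(f_j)_{j\in J}$ directly, by recursion on word length, then verify that each $f_j$ is genuinely an automorphism, that the family satisfies the prescribed recursions, and that it is self-similar; uniqueness will be immediate from the recursion itself. Throughout I would rely on the characterization quoted before the lemma, that a bijection of $X^*$ is a tree automorphism precisely when it preserves word length and the length of the common beginning of any two words.

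First I would unpack what ``satisfies the system'' means. By the description of the wreath recursion preceding the lemma, the relation $f_j=\pi_j(f_{m(j,x_1)},\dots,f_{m(j,x_k)})$ is equivalent to the conditions $f_j|_{x_i}=f_{m(j,x_i)}$ together with
\[f_j(x_iw)=\pi_j(x_i)\,f_{m(j,x_i)}(w)\qquad(1\le i\le k,\ w\in X^*),\]
and every automorphism fixes the root, so $f_j(\varnothing)=\varnothing$. This already shows that a solution, if it exists, is determined on words of length $n+1$ by the values of the maps $f_{m(j,x_i)}$ on words of length $n$. I would therefore \emph{define} all the $f_j$ simultaneously by induction on length, setting $f_j(\varnothing)=\varnothing$ and $f_j(x_iw)=\pi_j(x_i)f_{m(j,x_i)}(w)$; this is unambiguous because each nonempty word has a unique first letter and a strictly shorter remainder. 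The same remark yields uniqueness at once: any family satisfying the system obeys exactly this recursion, so it coincides with the constructed one by induction on length.

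Next I would verify that each constructed $f_j$ is an automorphism. Length preservation is an easy induction. Injectivity is also an induction on length: equal images force equal first letters since $\pi_j$ is a bijection, and the inductive hypothesis then handles the shorter tails; surjectivity follows because each level $X^n$ is finite and an injective self-map of a finite set is onto. The engine for the remaining property is the section identity
\[f_j(uw)=f_j(u)\,f_{M(j,u)}(w),\qquad M(j,\varnothing)=j,\ \ M(j,xv)=M(m(j,x),v),\]
which I would prove by induction on $|u|$, the step being a direct unfolding of the defining recursion. Preservation of common beginnings then follows: writing two words as $pa'$ and $pb'$ with maximal common prefix $p$, their images share the prefix $f_j(p)$, and if $a',b'$ are both nonempty their distinct first letters are sent by $\pi_{M(j,p)}$ to distinct letters, so the images diverge exactly at position $|p|$.

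Finally the section identity delivers the structural conclusions. Taking $u=x_i$ gives $f_j|_{x_i}=f_{m(j,x_i)}$, and since $f_j(x_i)=\pi_j(x_i)$ the root permutation of $f_j$ is $\pi_j$; hence $f_j=\pi_j(f_{m(j,x_1)},\dots,f_{m(j,x_k)})$ and the system holds. More generally $f_j|_u=f_{M(j,u)}$ lies in the family, so $\{f_j:j\in J\}$ is closed under sections, i.e.\ self-similar. I expect the only mildly delicate point to be the bookkeeping in the section identity and the correct recursive definition of the auxiliary index map $M$ (which simply iterates $m$ along $u$); everything else reduces to routine induction on word length.
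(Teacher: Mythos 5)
Your proof is correct and follows essentially the same route as the paper: define the family inductively on word length via $f_j(\varnothing)=\varnothing$ and $f_j(x_iw)=\pi_j(x_i)f_{m(j,x_i)}(w)$, verify length preservation, bijectivity, and preservation of common beginnings by induction, and read off self-similarity and uniqueness from the construction. The only difference is one of detail — your explicit section identity $f_j|_u=f_{M(j,u)}$ and the prefix-divergence argument spell out what the paper compresses into ``it follows by induction on $n$,'' which is a fine elaboration rather than a new approach.
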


\begin{proof}
Consider a system of wreath recursions $f_j=\pi_j(f_{m(j,x_1)},\dots,
f_{m(j,x_k)})$, $j\in J$, where $\pi_j$, $j\in J$ are permutations on $X$
and $m$ is a mapping of $J\times X$ to $J$.  We define transformations
$F_j$, $j\in J$ of the set $X^*$ inductively as follows.  First
$F_j(\varnothing)=\varnothing$ for all $j\in J$.  Then, once the
transformations are defined on words of a particular length $n\ge0$, we let
$F_j(x_iw)=\pi_j(x_i)F_{m(j,x_i)}(w)$ for all $j\in J$, $1\le i\le k$, and
words $w$ of length $n$.  By definition, each $F_j$ preserves the length of
words.  Besides, it follows by induction on $n$ that $F_j$ is bijective
when restricted to words of length $n$ and that $F_j$ preserves having a
common beginning of length $n$ for any two words.  Therefore each $F_j$ is
an automorphism of the tree $\cT$.  By construction, the automorphisms
$F_j$, $j\in J$ form a self-similar set satisfying the above system of
wreath recursions.  Moreover, they provide the only solution to that
system.
\end{proof}

An {\em infinite path\/} in the tree $\cT$ is an infinite sequence of
vertices $v_0,v_1,v_2,\dots$ together with a sequence of edges
$e_1,e_2\dots$ such that the endpoints of any $e_i$ are $v_{i-1}$ and
$v_i$.  The vertex $v_0$ is the beginning of the path.  Clearly, the path
is uniquely determined by the sequence of vertices alone.  The {\em
boundary\/} of the rooted tree $\cT$, denoted $\dT$, is the set of all
infinite paths without backtracking that begin at the root.  There is a
natural one-to-one correspondence between $\dT$ and the set $X^{\bN}$ of
infinite words over the alphabet $X$.  Namely, an infinite word
$x_1x_2x_3\dots$ corresponds to the path going through the vertices
$\varnothing,x_1,x_1x_2,x_1x_2x_3,\dots$.  The set $X^{\bN}$ is equipped
with the product topology and the uniform Bernoulli measure.  This allows
us to regard the tree boundary $\dT$ as a compact topological space with a
Borel probability measure (called {\em uniform\/}).

Suppose $G$ is a group of automorphisms of the regular rooted tree $\cT$.
The natural action of $G$ on the vertex set $X^*$ gives rise to an action
on the boundary $\dT$.  The latter is continuous and preserves the uniform
measure on $\dT$.

\begin{proposition}[\cite{G2}]\label{tree1}
Let $G$ be a countable group of automorphisms of a regular rooted tree
$\cT$.  Then the following conditions are equivalent:
\begin{itemize}
\item[(i)]
the group $G$ acts transitively on each level of the tree;
\item[(ii)]
the action of $G$ on the boundary $\dT$ of the tree is topologically
transitive;
\item[(iii)]
the action of $G$ on $\dT$ is minimal;
\item[(iv)]
the action of $G$ on $\dT$ is ergodic with respect to the uniform measure;
\item[(v)]
the action of $G$ on $\dT$ is uniquely ergodic.
\end{itemize}
\end{proposition}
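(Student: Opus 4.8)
The engine of the whole proposition is the dictionary between the levels of $\cT$ and the clopen \emph{cylinder} sets of the boundary. For a finite word $u\in X^*$ write $C_u\subset\dT$ for the set of infinite words beginning with $u$. These sets are clopen, the $C_u$ with $|u|=n$ partition $\dT$, and under the identification $\dT\cong X^{\bN}$ the uniform measure $\mu$ satisfies $\mu(C_u)=k^{-|u|}$, where $k=|X|$. Because every $g\in G$ preserves lengths of common beginnings, the action on the boundary sends $C_u$ onto $C_{g(u)}$; in particular $G$ permutes the $k^n$ level-$n$ cylinders exactly as it permutes the vertices of level $n$. The plan is to prove the two cycles $(\mathrm{i})\Rightarrow(\mathrm{iii})\Rightarrow(\mathrm{ii})\Rightarrow(\mathrm{i})$ and $(\mathrm{i})\Rightarrow(\mathrm{v})\Rightarrow(\mathrm{iv})\Rightarrow(\mathrm{i})$, so that all five conditions become equivalent.

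For the topological cycle I would first deduce minimality from level-transitivity. Fix $\xi\in\dT$ and a nonempty open $W$; then $W$ contains some cylinder $C_u$, and $\xi\in C_v$ with $|v|=|u|=n$, so by transitivity on level $n$ there is $g\in G$ with $g(v)=u$, whence $g(\xi)\in C_u\subset W$. Thus every orbit is dense, which is (iii); and (iii)$\Rightarrow$(ii) is immediate since $\dT\ne\emptyset$. For (ii)$\Rightarrow$(i), let $\xi$ have dense orbit and fix $u,v$ of length $n$. Density provides $g_1,g_2$ with $g_1(\xi)\in C_u$ and $g_2(\xi)\in C_v$; writing $w$ for the length-$n$ prefix of $\xi$, this says $g_1(w)=u$ and $g_2(w)=v$, so $g_2g_1^{-1}$ carries $u$ to $v$ and $G$ is transitive on level $n$.

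For the measure cycle the key point is that $(\mathrm{i})\Rightarrow(\mathrm{v})$ is essentially a counting statement. If $\nu$ is any $G$-invariant Borel probability measure and $G$ is transitive on level $n$, then all level-$n$ cylinders lie in one orbit, so invariance forces $\nu(C_u)=\nu(C_v)$ for all $|u|=|v|=n$; since these $k^n$ cylinders partition $\dT$, each has $\nu$-measure $k^{-n}=\mu(C_u)$. As the clopen cylinders generate the Borel $\sigma$-algebra, $\nu=\mu$, so the invariant measure is unique. The implication $(\mathrm{v})\Rightarrow(\mathrm{iv})$ is the standard fact that a unique invariant probability measure is automatically ergodic: were there an invariant set of intermediate measure, conditioning $\mu$ on it and on its complement would yield two distinct invariant probabilities. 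Finally, for $(\mathrm{iv})\Rightarrow(\mathrm{i})$, if $G$ failed to be transitive on some level $n$, a proper nonempty $G$-orbit $O\subset X^n$ would give the clopen $G$-invariant set $\bigcup_{u\in O}C_u$ of measure strictly between $0$ and $1$, contradicting ergodicity.

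The real content is concentrated in the single structural observation that transitivity of $G$ on level $n$ is the same as the level-$n$ cylinders forming one $G$-orbit; once this is isolated, every implication above is short. I expect the only genuinely delicate points to be bookkeeping rather than ideas: verifying that agreement of two Borel probability measures on the (countable, clopen) cylinder algebra forces them to coincide -- a $\pi$--$\lambda$/uniqueness-of-extension argument -- and spelling out the conditioning construction in $(\mathrm{v})\Rightarrow(\mathrm{iv})$. Routing ergodicity through unique ergodicity in this way is deliberate: it lets me avoid any martingale-convergence or density-point argument, which would otherwise be the natural but heavier route to $(\mathrm{i})\Rightarrow(\mathrm{iv})$.
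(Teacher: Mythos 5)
Your proof is correct, but there is no in-paper argument to compare it against: the paper states Proposition \ref{tree1} with an attribution to \cite{G2} and omits the proof entirely, treating it as a known result. So your write-up is, in effect, supplying the missing proof, and it does so by the standard route: the dictionary between level-$n$ vertices and level-$n$ cylinders is exactly the right engine, the two cycles (i)$\Rightarrow$(iii)$\Rightarrow$(ii)$\Rightarrow$(i) and (i)$\Rightarrow$(v)$\Rightarrow$(iv)$\Rightarrow$(i) share condition (i) and hence yield all equivalences, and each individual implication checks out. One point worth making explicit: unique ergodicity, as defined in Section \ref{act}, asserts the \emph{existence} of a unique invariant Borel probability measure, so in (i)$\Rightarrow$(v) you should record that the uniform measure $\mu$ is itself invariant, not only that any invariant measure must equal $\mu$. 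This costs nothing: by your own observation that each $g\in G$ maps $C_u$ onto $C_{g(u)}$ with $|g(u)|=|u|$, the measures $\mu$ and $g_*\mu$ agree on all cylinders, and the same $\pi$--$\lambda$ uniqueness argument you already invoke gives $g_*\mu=\mu$ (the paper also states this invariance just before the proposition, so you may simply quote it). With that line added, your proof is complete and self-contained, and the bookkeeping points you flag (uniqueness of extension from the cylinder algebra, the conditioning construction in (v)$\Rightarrow$(iv)) are indeed routine.
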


Let $G$ be a countable group of automorphisms of a regular rooted tree
$\cT$.  Let $\al$ denote the natural action of $G$ on the vertex set of the
tree $\cT$ and $\be$ denote the induced action of $G$ on the boundary $\dT$
of the tree.

\begin{proposition}\label{tree2}
The mapping $\St_\be$ is continuous on a residual (dense $G_\delta$) set.
\end{proposition}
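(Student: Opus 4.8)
The plan is to invoke Lemma \ref{sub4}(ii), which characterizes the continuity points of the stabilizer map $\St_\be$ as exactly those $\xi\in\dT$ where the stabilizer equals the neighborhood stabilizer: $\St^o_\be(\xi)=\St_\be(\xi)$. Since Lemma \ref{sub4}(i) already tells us $\St_\be$ is Borel measurable, the task reduces to showing that the set $D=\{\xi\in\dT \mid \St^o_\be(\xi)=\St_\be(\xi)\}$ is residual, i.e.\ contains a dense $G_\delta$. The complement consists of those $\xi$ admitting some $g\in\St_\be(\xi)\setminus\St^o_\be(\xi)$, that is, an element $g$ fixing $\xi$ but not fixing any neighborhood of $\xi$.

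First I would express $D$ in terms of the fixed-point sets $\Fix_\be(g)$ introduced in the proof of Lemma \ref{sub4}. For each $g\in G$, write $\Fix_\be(g)$ for the (closed) set of boundary points fixed by $g$, and let $\partial\Fix_\be(g)$ denote its topological boundary in $\dT$. The key observation is that $\xi$ fails to lie in $D$ precisely when there is some $g$ with $\xi\in\Fix_\be(g)$ but $\xi$ not an interior point of $\Fix_\be(g)$; this is exactly the condition $g\in\St_\be(\xi)\setminus\St^o_\be(\xi)$ spelled out in the continuity criterion. Hence the discontinuity set is $\bigcup_{g\in G}\partial\Fix_\be(g)$, a countable union (since $G$ is countable). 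Therefore $D=\dT\setminus\bigcup_{g\in G}\partial\Fix_\be(g)$.

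The heart of the argument is that each $\partial\Fix_\be(g)$ is nowhere dense. Since $\Fix_\be(g)$ is closed, its topological boundary $\partial\Fix_\be(g)$ is automatically closed, so it suffices to show it has empty interior. Here I would use the tree structure: $\Fix_\be(g)$ is determined by the action of $g$ on finite levels, so it is a clopen-like set built from cylinders, and a genuine boundary point of such a set cannot contain an open cylinder neighborhood inside the boundary. Concretely, the fixed-point set of a tree automorphism is closed and its boundary—the set of fixed infinite words every neighborhood of which contains a nonfixed word—is contained in a set with no interior, because near any point of $\Fix_\be(g)$ that is a limit of nonfixed points, one can also find fixed points filling out cylinders, preventing the boundary from being somewhere dense. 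I expect this nowhere-density step to be the main obstacle, as it requires genuinely using that the action comes from tree automorphisms rather than an arbitrary continuous action.

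Granting that each $\partial\Fix_\be(g)$ is closed and nowhere dense, the complement $\dT\setminus\partial\Fix_\be(g)$ is open and dense. Then
$$
D=\bigcap_{g\in G}\bigl(\dT\setminus\partial\Fix\nolimits_\be(g)\bigr)
$$
is a countable intersection of open dense sets, hence a dense $G_\delta$ by the Baire category theorem, applicable since $\dT$ is a compact metric space. By Lemma \ref{sub4}(ii), $\St_\be$ is continuous at every point of $D$, completing the proof.
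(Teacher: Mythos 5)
Your proposal follows the paper's proof essentially line for line: identify the discontinuity set via Lemma \ref{sub4}(ii) as $\bigcup_{g\in G}\partial\Fix_\be(g)$, show each boundary is closed and nowhere dense, and conclude by Baire category that the continuity set is a dense $G_\delta$. The one place you go astray is the step you flag as ``the main obstacle'': the nowhere-density of $\partial\Fix_\be(g)$. You claim this requires genuinely using the tree structure, and your sketch of it (finding ``fixed points filling out cylinders'' near boundary points) is muddled and does not constitute an argument. In fact no structure is needed at all: the boundary of \emph{any} closed set $C$ in \emph{any} topological space is nowhere dense. Indeed, $\partial C=C\setminus\mathrm{int}(C)$ is closed, and if a nonempty open set $U$ were contained in $\partial C$, then $U\subset C$ would force $U\subset\mathrm{int}(C)$, contradicting $\partial C\cap\mathrm{int}(C)=\emptyset$. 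This one-line general fact is exactly what the paper invokes (``Since $\Fix_\be(g)$ is a closed set, its boundary is a closed, nowhere dense set''), with closedness of $\Fix_\be(g)$ already supplied by the proof of Lemma \ref{sub4}. So your proof is correct once that paragraph of hand-waving is replaced by this observation; everything else matches the paper's argument.
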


\begin{proof}
For any $g\in G$ let $\Fix_\be(g)$ denote the set of all points in $\dT$
fixed by the transformation $\be_g$.  If $g\in\St_\be(\xi)$ but
$g\notin\St^o_\be(\xi)$, then $\xi$ is a boundary point of the set
$\Fix_\be(g)$, and vice versa.  Since $\Fix_\be(g)$ is a closed set, its
boundary is a closed, nowhere dense set.  It follows that the set of points
$\xi\in\dT$ such that $\St^o_\be(\xi)=\St_\be(\xi)$ is the intersection of
countably many dense open sets (it is dense since $\dT$ is a complete
metric space).  By Lemma \ref{sub4}, the latter set consists of points at
which the mapping $\St_\be$ is continuous.
\end{proof}

The mapping $\St_\be$ is Borel due to Lemma \ref{sub4}.  If $\St_\be$ is
injective then, according to the descriptive set theory, it also maps Borel
sets onto Borel sets (see, e.g., \cite{K}).  The following two lemmas show
the same can hold under a little weaker condition.

\begin{lemma}\label{tree3}
Assume that for any points $\xi,\eta\in\dT$ either $\St_\be(\xi)
=\St_\be(\eta)$ or $\St_\be(\xi)$ is not contained in $\St_\be(\eta)$.
Then the mapping $\St_\be$ maps any open set, any closed set, and any
intersection of an open set with a closed one onto Borel sets.
\end{lemma}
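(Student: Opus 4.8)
The plan is to reduce the statement to the Arsenin--Kunugui theorem from descriptive set theory (see, e.g., \cite{K}), which asserts that if $B$ is a Borel subset of a product of two Polish spaces all of whose vertical sections are $\sigma$-compact, then the projection of $B$ onto the first factor is Borel. Both $\dT$ and $\Sub(G)$ are compact metrizable, hence Polish, and $\St_\be$ is Borel by Lemma \ref{sub4}, so this is an appropriate tool. The whole argument hinges on first using the hypothesis to show that the nonempty fibers of $\St_\be$ are not merely $G_\delta$ but actually closed.

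For any subgroup $H$ of $G$ I would set $C_H=\bigcap_{g\in H}\Fix_\be(g)$; since each $\Fix_\be(g)$ is closed (as recorded in the proof of Proposition \ref{tree2}), $C_H$ is closed. One always has $\St_\be^{-1}(H)\subset C_H$, since any point with stabilizer $H$ is fixed by every element of $H$. The reverse inclusion is where the hypothesis enters: if $H$ lies in the image, write $H=\St_\be(\xi)$; then any $\eta\in C_H$ satisfies $H\subset\St_\be(\eta)$, so by the assumption (a containment of stabilizers forces their equality) we get $\St_\be(\eta)=H$. Hence $\St_\be^{-1}(H)=C_H$ is closed whenever $H$ is in the image of $\St_\be$, while $\St_\be^{-1}(H)=\emptyset$ otherwise.

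Next I would set up the relevant Borel set. Let $E$ be the given set (open, closed, or an intersection of an open set with a closed one) and put $B=\{(H,\xi)\in\Sub(G)\times\dT : \xi\in E \text{ and } \St_\be(\xi)=H\}$, so that $\St_\be(E)$ is exactly the projection of $B$ to $\Sub(G)$. The set $B$ is Borel: the condition $\xi\in E$ is Borel, and the condition $\St_\be(\xi)=H$ is the countable intersection over $g\in G$ of the conditions ``$g\in H$ if and only if $\xi\in\Fix_\be(g)$'', each of which is Borel because $\{H:g\in H\}=U_G(\{g\},\emptyset)$ is clopen in $\Sub(G)$ and $\Fix_\be(g)$ is closed in $\dT$. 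The section of $B$ over a subgroup $H$ equals $E\cap\St_\be^{-1}(H)$, so it is empty unless $H$ is in the image, in which case it is $E\cap C_H$.

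It remains to check that these sections are $\sigma$-compact, and this is exactly where the restriction to the three listed types of sets is essential. Since $C_H$ is a closed subset of the compact space $\dT$, it is compact. If $E$ is closed then $E\cap C_H$ is compact; if $E$ is open then $E\cap C_H$ is an open subset of the compact metric space $C_H$, hence $\sigma$-compact; and if $E=O\cap F$ with $O$ open and $F$ closed, then $E\cap C_H=O\cap(F\cap C_H)$ is an open subset of the compact set $F\cap C_H$, again $\sigma$-compact. Thus every section of $B$ is $\sigma$-compact, and the Arsenin--Kunugui theorem yields that $\St_\be(E)$ is Borel. The crux of the argument---and the only place the hypothesis is used---is the collapsing of the fibers to closed sets in the second paragraph; for a general Borel $E$ the sections $E\cap\St_\be^{-1}(H)$ need not be $\sigma$-compact, which is precisely why the conclusion is stated only for open, closed, and locally closed sets.
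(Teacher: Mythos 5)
Your proof is correct, but it takes a genuinely different route from the paper's. The point of contact is how the hypothesis is used: you use it to collapse every nonempty fiber $\St_\be^{-1}(H)$ to the closed set $\bigcap_{g\in H}\Fix_\be(g)$, and that step is sound; from there you feed the Borel graph of $\St_\be$ over $E$, whose vertical sections are then empty or of the form $E$ intersected with a compact set (hence $\sigma$-compact for the three classes of sets in question), into the Arsenin--Kunugui projection theorem. The paper avoids any such black box. For a closed set $C\subset\dT$ it constructs, level by level in the tree, the closed set $W=\bigcap_{n\ge1}\bigcup_{w\in C_n}\Sub(\St_\al(w))$, shows by a K\"onig-type compactness argument over the finite levels $C_n$ that $W$ is exactly the family of all subgroups of the stabilizers $\St_\be(\xi)$, $\xi\in C$, then uses the hypothesis to characterize $\St_\be(C)$ as the set of \emph{maximal} elements of $W$, and removes the non-maximal ones by subtracting the countably many Borel sets $W'_g=W\cap\psi_g^{-1}(W)\cap U_G(\emptyset,\{g\})$, where $\psi_g(H)=\langle g\rangle\vee H$ is Borel by Lemma \ref{sub2}; open and locally closed sets are then handled by decomposing into cylinders. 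The trade-offs are clear. The paper's argument is elementary and self-contained, which is evidently deliberate (the surrounding text cites descriptive set theory only as an alternative in the injective case and offers these lemmas as hands-on substitutes), but it leans on the tree structure through the levels $C_n$. Your argument is shorter and more conceptual: it works verbatim for any continuous action of a countable group on a compact metric space, it explains transparently why exactly open, closed, and locally closed sets appear (they are the sets whose trace on every compact set is $\sigma$-compact), and it would also dispatch Lemma \ref{tree4} immediately, since finite sections are compact; the cost is reliance on a substantial theorem from \cite{K}.
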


\begin{proof}
First let us show that $\St_\be$ maps any closed subset $C$ of the boundary
$\dT$ onto a Borel subset of $\Sub(G)$.  For any positive integer $n$ let
$C_n$ denote the set of all words of length $n$ in the alphabet $X$ that
are beginnings of infinite words in $C$.  Further, let $W_n$ be the union
of sets $\Sub(\St_\al(w))$ over all words $w\in C_n$.  Finally, let $W$ be
the intersection of the sets $W_n$ over all $n\ge1$.  By Lemma \ref{sub3},
the set $\Sub(H)$ is closed in $\Sub(G)$ for any subgroup $H\in\Sub(G)$.
Hence each $W_n$ is closed as the union of finitely many closed sets.  Then
the intersection $W$ is closed as well.

The stabilizer $\St_\be(\xi)$ of an infinite word $\xi\in\dT$ is a subgroup
of the stabilizer $\St_\al(w)$ of a finite word $w\in X^*$ whenever $w$ is
a beginning of $\xi$.  It follows that $\St_\be(\xi)\in W$ for all $\xi\in
C$.  By construction of the set $W$, any subgroup of an element of $W$ is
also an element of $W$.  Hence $W$ contains all subgroups of the groups
$\St_\be(\xi)$, $\xi\in C$.

Conversely, for any subgroup $H\in W$ there is a sequence of words
$w_1,w_2,\dots$ such that $w_n\in C_n$ and $H\subset\St_\al(w_n)$ for
$n=1,2,\dots$.  Since the number of words of a fixed length is finite, one
can find nested infinite sets of indices $I_1\supset I_2\supset\dots$ such
that the beginning of length $k$ of the word $w_n$ is the same for all
$n\in I_k$.  Choose an increasing sequence of indices $n_1,n_2,\dots$ such
that $n_k\in I_k$ for all $k$, and let $w'_k$ be the beginning of length
$k$ of the word $w_{n_k}$.  Then $w'_k\in C_k$ as $w_{n_k}\in C_{n_k}$.
Besides, $\St_\al(w_{n_k})\subset\St_\al(w'_k)$, in particular, the group
$H$ is a subgroup of $\St_\al(w'_k)$.  By construction, the word $w'_k$ is
a beginning of $w'_m$ whenever $k<m$.  Therefore all $w'_k$ are beginnings
of the same infinite word $\xi'\in\dT$.  Since every beginning of $\xi'$
coincides with a beginning of some infinite word in $C$ and the set $C$ is
closed, it follows that $\xi'\in C$.  The stabilizer $\St_\be(\xi')$ is the
intersection of stabilizers $\St_\al(w'_k)$ over all $k\ge1$.  Hence $H$ is
a subgroup of $\St_\be(\xi')$.

By the above a subgroup $H$ of $G$ belongs to the set $W$ if and only if it
is a subgroup of the stabilizer $\St_\be(\xi)$ for some $\xi\in C$.  The
assumption of the lemma implies that stabilizers $\St_\be(\xi)$, $\xi\in C$
can be distinguished as the maximal subgroups in the set $W$.  That is,
such a stabilizer is an element of $W$ which is not a proper subgroup of
another element of $W$.  For any $g\in G$ we define a transformation
$\psi_g$ of $\Sub(G)$ by $\psi_g(H)=\langle g\rangle\vee H$, where $\langle
g\rangle$ is a cyclic subgroup of $G$ generated by $g$.  The group
$\psi_g(H)$ is generated by $g$ and all elements of the group $H$.
Clearly, a subgroup $H\in W$ is not maximal in $W$ if and only if
$\psi_g(H)\in W$ for some $g\notin H$.  An equivalent condition is that $H$
belongs to the set $W'_g=W\cap\psi_g^{-1}(W)\cap U_G(\emptyset,\{g\})$.  It
follows from Lemma \ref{sub2} that the mapping $\psi_g$ is Borel
measurable.  Therefore $W'_g$ is a Borel set.  Now the image of the set $C$
under the mapping $\St_\be$ is the difference of the closed set $W$ and the
union of Borel sets $W'_g$, $g\in G$.  Hence this image is a Borel set.

Any open set $D\subset\dT$ is the union of a finite or countable collection
of cylinders $Z_1,Z_2,\dots$, which are both open and closed sets.  By the
above each cylinder is mapped by $\St_\be$ onto a Borel set in $\Sub(G)$.
Then the union $D$ is mapped onto the union of images of the cylinders,
which is a Borel set as well.  Further, for any closed set $C\subset\dT$
the intersection $C\cap D$ is the union of closed sets $C\cap Z_1,
C\cap Z_2,\dots$.  Hence it is also mapped by $\St_\be$ onto a Borel set.
\end{proof}

\begin{lemma}\label{tree4}
Under the assumption of Lemma \ref{tree3}, if the mapping $\St_\be$ is
finite-to-one, i.e., the preimage of any subgroup in $\Sub(G)$ is finite,
then it maps Borel sets onto Borel sets.
\end{lemma}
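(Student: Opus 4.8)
The plan is to reduce the assertion to the injective case, for which the descriptive set theory (the result quoted just before Lemma~\ref{tree3}) already guarantees that a Borel injective map sends Borel sets to Borel sets. The reduction exploits the finite-to-one hypothesis to split $\dT$ into countably many Borel pieces on each of which $\St_\be$ is one-to-one; then the image of any Borel set is a countable union of images of injective restrictions, and images commute with unions.

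First I would note that the equivalence relation $\xi\sim\eta\iff\St_\be(\xi)=\St_\be(\eta)$ is Borel, being the preimage of the (closed) diagonal of the compact metric space $\Sub(G)$ under the Borel map $\St_\be\times\St_\be$ (Borelness of $\St_\be$ is Lemma~\ref{sub4}). By the finite-to-one hypothesis every $\sim$-class is finite. Identifying $\dT$ with $X^{\bN}$, for each $n\ge0$ and each word $w$ of length $n$ let $Z_w\subset\dT$ denote the clopen cylinder of infinite words beginning with $w$, and put
$$
D_{n,w}=\{\xi\in Z_w : \text{there is no } \eta\ne\xi \text{ with } \eta\sim\xi \text{ and } \eta\in Z_w\}.
$$
On $D_{n,w}$ the map $\St_\be$ is injective: two $\sim$-equivalent points of $D_{n,w}$ both lie in $Z_w$, so by the defining condition they coincide. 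Since each class is finite, for a fixed $\xi$ the cylinders $Z_w\ni\xi$ shrink to $\{\xi\}$, so for all large $n$ the only class-member of $\xi$ in its length-$n$ cylinder is $\xi$ itself; hence the countable family $\{D_{n,w}\}$ covers $\dT$.

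It remains to see that each $D_{n,w}$ is Borel and then to conclude. The complement of $D_{n,w}$ inside $Z_w$ consists of the $\xi$ for which some $\eta\ne\xi$ with $\eta\sim\xi$ lies in $Z_w$; this is the first-coordinate projection of the Borel set $\{(\xi,\eta):\xi\sim\eta,\ \xi\ne\eta\}\cap(Z_w\times Z_w)$, whose vertical sections are contained in the finite $\sim$-classes and are therefore finite. By the Lusin--Novikov theorem the projection of a Borel set with countable sections is Borel (see \cite{K}), so $D_{n,w}$ is Borel. Each restriction $\St_\be|_{D_{n,w}}$ is then an injective Borel map from the standard Borel space $D_{n,w}$ into $\Sub(G)$, and by the injective case cited before Lemma~\ref{tree3} it carries Borel subsets of $D_{n,w}$ to Borel subsets of $\Sub(G)$. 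Consequently, for an arbitrary Borel set $B\subset\dT$ we obtain $\St_\be(B)=\bigcup_{n,w}\St_\be(B\cap D_{n,w})$, a countable union of Borel sets, hence Borel.

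The main obstacle is exactly the Borel measurability of the injective pieces $D_{n,w}$: the naive description involves an existential quantifier over the uncountable set $\dT$ and yields only an analytic set a priori. The decisive point — and the precise place where the finite-to-one hypothesis is indispensable — is that the relevant sections are finite, so that Lusin--Novikov upgrades the analytic description to a Borel one. I should remark that this route leans on the finite-to-one hypothesis together with the cited injective case; the standing assumption of Lemma~\ref{tree3} becomes necessary only if one prefers to feed the injective pieces into Lemma~\ref{tree3} itself instead of re-citing the injective result, which would additionally require choosing the pieces to be locally closed.
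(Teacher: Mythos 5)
Your proof is correct, but it takes a genuinely different route from the paper's. You reduce the finite-to-one case to the injective case: you cover $\dT$ by countably many Borel sets $D_{n,w}$ on which $\St_\be$ is injective, use the Lusin--Novikov theorem (finite sections) to see that each $D_{n,w}$ is Borel, then apply the injective result quoted before Lemma \ref{tree3} (Lusin--Souslin) piecewise, and finish by observing that images commute with unions. The paper's proof is entirely different and more elementary: it never invokes Lusin--Novikov nor the injective case. Instead it introduces the class $\fU$ of subsets of $\dT$ generated from the closed sets by countable unions and countable intersections of \emph{nested} sets, notes that a finite-to-one map commutes with nested countable intersections, uses Lemma \ref{tree3} to see that closed sets (and intersections of open with closed sets) have Borel images, and then proves by a monotone-class-style argument that $\fU$ coincides with the full Borel $\sigma$-algebra, so every Borel set has a Borel image. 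The trade-offs are worth noting. Your argument leans on nontrivial machinery from \cite{K}, but it proves a strictly stronger and purely general fact --- any finite-to-one (indeed countable-to-one) Borel map between standard Borel spaces sends Borel sets to Borel sets --- and, as you correctly point out, it makes no use whatsoever of the standing assumption of Lemma \ref{tree3} (the condition on stabilizers), so that hypothesis becomes redundant in your version of the lemma. The paper's argument, by contrast, is self-contained modulo Lemma \ref{tree3} and avoids the heavier descriptive set theory, but it is genuinely tied to that stabilizer assumption, since Lemma \ref{tree3} supplies the base case of its closure argument.
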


\begin{proof}
Recall that the class $\fB$ of the Borel sets in $\dT$ is the smallest
collection of subsets of $\dT$ that contains all closed sets and is closed
under taking countable intersections, countable unions, and complements.
Let $\fU$ denote the smallest collection of subsets of $\dT$ that contains
all closed sets and is closed under taking countable intersections of
nested sets and countable unions of any sets.  Note that $\fU$ is well
defined; it is the intersection of all collections satisfying these
conditions.  In particular, $\fU\subset\fB$.  Further, let $\fW$ denote the
collection of all Borel sets in $\dT$ mapped onto Borel sets in $\Sub(G)$
by the mapping $\St_\be$.

For any mapping $f:\dT\to\Sub(G)$ and any sequence $U_1,U_2,\dots$ of
subsets of $\dT$ the image of the union $U_1\cup U_2\cup\dots$ under $f$ is
the union of images $f(U_1),f(U_2),\dots$.  On the other hand, the image of
the intersection $U_1\cap U_2\cap\dots$ under $f$ is contained in
$f(U_1)\cap f(U_2)\cap\dots$ but need not coincide with the latter when the
mapping $f$ is not one-to-one.  The two sets do coincide if $f$ is
finite-to-one and $U_1\supset U_2\supset\dots$.    Since the mapping
$\St_\be$ is assumed to be finite-to-one, it follows that the collection
$\fW$ is closed under taking countable intersections of nested sets and
countable unions of any sets.  By Lemma \ref{tree3}, $\fW$ contains all
closed sets.  Therefore $\fU\subset\fW$.

To complete the proof, we are going to show that $\fU=\fB$, which will
imply that $\fW=\fB$.  Given a set $Y\in\fU$, let $\fU_Y$ denote the
collection of all sets $U\in\fU$ such that the intersection $U\cap Y$ also
belongs to $\fU$.  For any sequence $U_1,U_2,\dots$ of elements of $\fU_Y$
we have
$$
\left(\bigcup\nolimits_{n\ge1}U_n\right)\cap Y
=\bigcup\nolimits_{n\ge1}(U_n\cap Y),
\qquad
\left(\bigcap\nolimits_{n\ge1}U_n\right)\cap Y
=\bigcap\nolimits_{n\ge1}(U_n\cap Y).
$$
Besides, the sets $U_1\cap Y,U_2\cap Y,\dots$ are nested whenever the sets
$U_1,U_2,\dots$ are nested.  It follows that the class $\fU_Y$ is closed
under taking countable intersections of nested sets and countable unions of
any sets.  Consequently, $\fU_Y=\fU$ whenever $\fU_Y$ contains all closed
sets.  The latter condition obviously holds if the set $Y$ is itself
closed.  Notice that for any sets $Y,Z\in\fU$ we have $Z\in\fU_Y$ if and
only if $Y\in\fU_Z$.  Since $\fU_Y=\fU$ for any closed set $Y$, it follows
that $\fU_Z$ contains all closed sets for any $Z\in\fU$.  Then $\fU_Z=\fU$
for any $Z\in\fU$.  In other words, the class $\fU$ is closed under taking
finite intersections.  Combining finite intersections with countable
intersections of nested sets, we can obtain any countable intersection of
sets from $\fU$.  Namely, if $U_1,U_2,\dots$ are arbitrary elements of
$\fU$, then their intersection coincides with the intersection of sets
$Y_n=U_1\cap U_2\cap\dots\cap U_n$, $n=1,2,\dots$, which are nested:
$Y_1\supset Y_2\supset\dots$.  Therefore $\fU$ is closed under taking any
countable intersections.

Let $\fU'$ be the collection of complements in $\dT$ of all sets from
$\fU$.  For any subsets $U_1,U_2,\dots$ of $\dT$ the complement of their
union is the intersection of their complements $\dT\setminus U_1,
\dT\setminus U_2,\dots$ while the complement of their intersection is the
union of their complements.  Since the class $\fU$ is closed under taking
countable intersections and countable unions, so is $\fU'$.  Further, any
open subset of $\dT$ is the union of at most countably many cylinders,
which are closed (as well as open) sets.  Therefore $\fU$ contains all open
sets.  Then $\fU'$ contains all closed sets.  Now it follows that
$\fU\subset\fU'$.  In other words, the class $\fU$ is closed under taking
complements.

Thus the collection $\fU$ is closed under taking any countable
intersections and complements.  This implies that $\fU=\fB$.
\end{proof}

Let $A$ be a continuous action of a countable group $G$ on a compact metric
space $M$.  Let $\Om$ denote the image of $M$ under the mapping $\St_A$.

\begin{lemma}\label{tree5}
Assume that for any distinct points $x,y\in M$ the neighborhood stabilizer
$\St^o_A(x)$ is not contained in the stabilizer $\St_A(y)$.  Then the
inverse of $\St_A$, defined on the set $\Om$, can be extended to a
continuous mapping of the closure of $\Om$ onto $M$.
\end{lemma}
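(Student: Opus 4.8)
The plan is to construct the extension explicitly: each subgroup in $\overline{\Om}$ will be sent to the limit of the preimages of any approximating sequence drawn from $\Om$. Before anything else I would record that $\St_A$ is injective, which is needed even to speak of its inverse on $\Om$. Indeed, if $\St_A(x)=\St_A(y)$, then $\St^o_A(x)\subset\St_A(x)=\St_A(y)$, and the hypothesis (no neighborhood stabilizer of a point is contained in the stabilizer of a \emph{different} point) forces $x=y$. Hence $(\St_A)^{-1}:\Om\to M$ is a genuine bijection onto $M$. Now, for $H\in\overline{\Om}$ I would choose any sequence $H_n\in\Om$ with $H_n\to H$ (possible since $H$ lies in the closure of $\Om$) and define $G(H)=\lim_n(\St_A)^{-1}(H_n)$; the bulk of the work is to show this limit exists and does not depend on the chosen approximating sequence.

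The heart of the argument is this well-definedness, and I expect it to be the main obstacle, since it is where compactness of $M$, the one-sided inclusions of Lemma \ref{sub4}(iii), and the non-containment hypothesis must be combined. Write $x_n=(\St_A)^{-1}(H_n)$, so that $\St_A(x_n)=H_n\to H$. Because $M$ is compact, it suffices to show that all convergent subsequences of $(x_n)$ share a limit; then $(x_n)$ itself converges. Suppose $x_{n_k}\to x$ and $x_{m_j}\to y$ are two subsequential limits. The stabilizers along each subsequence still converge to $H$, so Lemma \ref{sub4}(iii) applied to the first gives $\St^o_A(x)\subset H\subset\St_A(x)$, and applied to the second gives $\St^o_A(y)\subset H\subset\St_A(y)$. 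Combining $\St^o_A(x)\subset H$ with $H\subset\St_A(y)$ yields $\St^o_A(x)\subset\St_A(y)$, whence the hypothesis forces $x=y$. Thus $(x_n)$ converges. Independence of the limit from the sequence then follows by interleaving two approximating sequences from $\Om$: the interleaved sequence still converges to $H$, so by the same argument its preimages converge, and the two subsequences must share that limit. Taking a constant sequence shows $G$ restricts to $(\St_A)^{-1}$ on $\Om$.

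It remains to check continuity and surjectivity. For continuity I would take $H_n\to H$ in $\overline{\Om}$. Since each $G(H_n)$ is by definition a limit of preimages of $\Om$-points approaching $H_n$, I can pick $K_n\in\Om$ that is within $1/n$ of $H_n$ in $\Sub(G)$ and whose preimage $(\St_A)^{-1}(K_n)$ is within $1/n$ of $G(H_n)$ in $M$. Then $K_n\to H$, so by the defining property of $G$ the points $(\St_A)^{-1}(K_n)$ converge to $G(H)$; being within $1/n$ of $G(H_n)$, they force $G(H_n)\to G(H)$, so $G$ is continuous. Finally, surjectivity is immediate: $G$ already carries $\Om$ onto $M$ via the bijection $(\St_A)^{-1}$, so $G(\overline{\Om})\supset M$, while $G$ takes values in $M$; hence $G$ maps $\overline{\Om}$ onto $M$, as required.
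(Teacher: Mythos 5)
Your proposal is correct and follows essentially the same route as the paper: injectivity from $\St^o_A(x)\subset\St_A(x)$ plus the non-containment hypothesis, and then the key step of combining compactness of $M$, Lemma \ref{sub4}(iii), and the hypothesis to show that any two subsequential limits of preimages must coincide. The paper compresses the construction of the extension, its independence of the approximating sequence, continuity, and surjectivity into the single remark that it suffices to prove convergence of $(x_n)$ whenever $\St_A(x_n)$ converges; you have simply written out those routine verifications explicitly.
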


\begin{proof}
Since $\St^o_A(x)$ is a subgroup of $\St_A(x)$ for any $x\in M$, the
assumption of the lemma implies that the mapping $\St_A$ is one-to-one so
that the inverse is well defined on $\Om$.  To prove that the inverse can
be extended to a continuous mapping of the closure of $\Om$ onto $M$, it is
enough to show that any sequence $x_1,x_2,\dots$ of points in $M$ is
convergent whenever the sequence of stabilizers $\St_A(x_1),\St_A(x_2),
\dots$ converges in $\Sub(G)$.  Suppose that $\St_A(x_n)\to H$ as
$n\to\infty$.  Since $M$ is a compact metric space, the sequence
$x_1,x_2,\dots$ has at least one limit point.  By Lemma \ref{sub4}(iii),
any limit point $x$ satisfies $\St^o_A(x)\subset H\subset\St_A(x)$.  In
particular, $\St^o_A(x)\subset\St_A(y)$ for any limit points $x$ and $y$.
Then $x=y$ due to the assumption of the lemma.  It follows that the
sequence $x_1,x_2,\dots$ is convergent.
\end{proof}

\section{The Grigorchuk group}\label{grig}

Let $X=\{0,1\}$ be the binary alphabet, $X^*$ be the set of finite words
over $X$ regarded as the vertex set of a binary rooted tree $\cT$, and
$X^{\bN}$ be the set of infinite words over $X$ regarded as the boundary
$\dT$ of the tree $\cT$.

We define the Grigorchuk group $\cG$ as a self-similar group of
automorphisms of the tree $\cT$ (for alternative definitions, see
\cite{G1}).  The group is generated by four automorphisms $a,b,c,d$ that,
together with the trivial automorphism, form a self-similar set.  Consider
the following system of wreath recursions:
$$
\left\{
\begin{array}{l}
a=(0\,1)(e,e),\\
b=(a,c),\\
c=(a,d),\\
d=(e,b),\\
e=(e,e).
\end{array}
\right.
$$
By Lemma \ref{tree0}, this system uniquely defines a self-similar set of
automorphisms of the tree $\cT$.  The automorphism $e$ is clearly the
identity (e.g., by Lemma \ref{tree0}).  It is the unity of the group $\cG$.
We shall denote the unity by $\1$ to avoid confusion with a letter of the
alphabet $X$.  The set $S=\{a,b,c,d\}$ shall be considered the standard set
of generators for the group $\cG$.

All $4$ generators of the Grigorchuk group are involutions.  Indeed, the
transformations $a^2,b^2,c^2,d^2,\1$ form a self-similar set satisfying
wreath recursions $a^2=(\1,\1)$, $b^2=(a^2,c^2)$, $c^2=(a^2,d^2)$,
$d^2=(\1,b^2)$, and $\1=(\1,\1)$.  Then Lemma \ref{tree0} implies that
$a^2=b^2=c^2=d^2=\1$.  This fact allows us to regard the Schreier graphs of
the group $\cG$ relative to the generating set $S$ as graphs with
undirected edges (as explained in Section \ref{graph}).

Since $a^2=\1$, the automorphisms $bcd$, $cdb$, $dbc$, and $\1$ form a
self-similar set satisfying wreath recursions $bcd=(\1,cdb)$,
$cdb=(\1,dbc)$, $dbc=(\1,bcd)$, and $\1=(\1,\1)$.  Lemma \ref{tree0}
implies that $bcd=cdb=dbc=\1$.  Then $bc=bcd^2=d=d^2bc=bc$.  It follows
that $\{\1,b,c,d\}$ is a subgroup of $\cG$ isomorphic to the Klein
$4$-group.

We denote by $\al$ the generic action of the group $\cG$ on vertices of the
binary rooted tree $\cT$.  The induced action on the boundary $\dT$ of the
tree is denoted $\be$.  For brevity, we write $g(\xi)$ instead of
$\be_g(\xi)$.  The action of the generator $a$ is very simple: it changes
the first letter in every finite or infinite word while keeping the other
letters intact.  In particular, the empty word is the only word fixed by
$a$.  To describe the action of the other generators, we need three
observations.  First of all, $b$, $c$, and $d$ fix one-letter words.
Secondly, any word beginning with $0$ is fixed by $d$ while $b$ and $c$
change only the second letter in such a word.  Thirdly, the section mapping
$g\mapsto g|_1$ induces a cyclic permutation on the set $\{b,c,d\}$.  It
follows that a finite or infinite word $w$ is simultaneouly fixed by $b$,
$c$, and $d$ if it contains no zeros or the only zero is the last letter.
Otherwise two of the three generators change the letter following the first
zero in $w$ (keeping the other letters intact) while the third generator
fixes $w$.  In the latter case, it is the position $k$ of the first zero in
$w$ that determines the generator fixing $w$.  Namely, $b(w)=w$ if
$k\equiv0\bmod3$, $c(w)=w$ if $k\equiv2\bmod3$, and $d(w)=w$ if
$k\equiv1\bmod3$.

\begin{lemma}\label{grig1}
The group $\cG$ is self-replicating.
\end{lemma}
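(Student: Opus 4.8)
The plan is to reduce the statement to the case of one-letter words $u\in X$ and then settle that case by a short computation with the wreath recursions. First I would observe that it suffices to show that for each letter $x\in X$ the section map $g\mapsto g|_x$ carries $\St_\al(x)$ onto $\cG$. Granting this, the general case follows by induction on the length of $u$. The case $u=\varnothing$ is trivial, since $g|_\varnothing=g$ and $\St_\al(\varnothing)=\cG$. For the inductive step write $u=xv$ with $x\in X$; given an arbitrary target $h\in\cG$, the induction hypothesis applied to the shorter word $v$ produces some $g'\in\St_\al(v)$ with $g'|_v=h$, and the one-letter case produces some $g\in\St_\al(x)$ with $g|_x=g'$. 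Since $g$ fixes the letter $x$ and $g|_x=g'$ fixes $v$, we get $g(xv)=x\,(g|_x)(v)=xg'(v)=xv$, so $g\in\St_\al(u)$; moreover $g|_u=(g|_x)|_v=g'|_v=h$, as required.

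Next I would treat the one-letter case. Because $X$ has two elements, an automorphism $f=\pi(h_0,h_1)$ fixes a one-letter word if and only if its root permutation $\pi$ is trivial, so $\St_\al(0)=\St_\al(1)$ is exactly the set of automorphisms of the form $(h_0,h_1)$, and on this subgroup the section maps are $f|_0=h_0$ and $f|_1=h_1$. Since $g\mapsto g|_x$ is a homomorphism on the stabilizer (as noted in Section~\ref{tree}), its image is a subgroup of $\cG$; hence it is enough to exhibit each generator $a,b,c,d$ as a section at $0$ and as a section at $1$ of some element of $\St_\al(x)$.

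The core of the argument is a brief computation exploiting $a^2=\1$. From $b=(a,c)$ one reads off $b|_0=a$. Conjugating the Klein-group generators by $a$ should give the stabilizer elements $ada=(b,\1)$, $aba=(c,a)$, and $aca=(d,a)$; their sections at $0$ are $b$, $c$, $d$ respectively, so together with $b|_0=a$ all four generators occur as sections at $0$. For the sections at $1$, the recursions $d|_1=b$, $b|_1=c$, $c|_1=d$ supply $b,c,d$, while $(aba)|_1=a$ supplies the remaining generator. Thus both section maps are onto $\cG$, which settles the one-letter case and, via the reduction above, proves the lemma.

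The step requiring the most care is verifying the three identities $ada=(b,\1)$, $aba=(c,a)$, $aca=(d,a)$. Each will be checked by tracking the action on words of the form $xw$: one applies $a$ (which interchanges the two first letters and has trivial sections $a|_x=\1$), then the middle generator, then $a$ again, and records the resulting first letter and section, the simplifications coming precisely from $a|_x=\1$ and $a^2=\1$. Everything else is bookkeeping.
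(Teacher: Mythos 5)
Your proposal is correct and follows essentially the same route as the paper: reduce to one-letter words via the composition identity $g|_{xv}=(g|_x)|_v$ (the paper phrases this as closure of the set of good words under concatenation, you phrase it as induction on length), then verify surjectivity of both section maps on $\St_\al(0)=\St_\al(1)$ using exactly the same six elements $b,c,d,aba,aca,ada$ and the same wreath recursions $aba=(c,a)$, $aca=(d,a)$, $ada=(b,\1)$. Your computations check out, so nothing is missing.
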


\begin{proof}
We have to show that for any word $w\in X^*$ the section mapping $g\mapsto
g|_w$ maps the stabilizer $\St_\al(w)$ onto the entire group $\cG$.  Let
$W$ be the set of all words with this property.  Clearly, $\varnothing\in
W$ as $\St_\al(\varnothing)=\cG$ and $g|_\varnothing=g$ for all $g\in\cG$.
Suppose $w_1,w_2\in W$.  Given an arbitrary $g\in\cG$, there exists
$g'\in\cG$ such that $g'(w_2)=w_2$ and $g'|_{w_2}=g$.  Further, there
exists $g''\in\cG$ such that $g''(w_1)=w_1$ and $g''|_{w_1}=g'$.  Then
$g''(w_1w_2)=g''(w_1)g''|_{w_1}(w_2)=w_1w_2$ and $g''|_{w_1w_2}
=(g''|_{w_1})|_{w_2}=g$.  Since $g$ is arbitrary, $w_1w_2\in W$.  That is,
the set $W$ is closed under concatenation.

Any automorphism of the tree $\cT$ either interchanges the vertices $0$ and
$1$ or fixes them both.  Hence the stabilizer $\St_\al(0)$ coincides with
$\St_\al(1)$.  This stabilizer contains the elements $b,c,d,aba,aca,ada$.
The wreath recursions for these elements are $b=(a,c)$, $c=(a,d)$,
$d=(\1,b)$, $aba=(c,a)$, $aca=(d,a)$, $ada=(b,\1)$.  It follows that the
images of the group $\St_\al(0)$ under the section mappings $g\mapsto g|_0$
and $g\mapsto g|_1$ contain the generating set $S$.  As the restrictions of
these mappings to $\St_\al(0)$ are homomorphisms, both images coincide with
$\cG$.  Therefore the words $0$ and $1$ are in the set $W$.  By the above
$W$ is closed under concatenation and contains the empty word.  This
implies $W=X^*$.
\end{proof}

The orbits of the actions $\al$ and $\be$ are very easy to describe.

\begin{lemma}\label{grig2}
The group $\cG$ acts transitively on each level of the binary rooted tree
$\cT$.  Any two infinite words in $\dT$ are in the same orbit of the action
$\be$ if and only if they differ in only finitely many letters.
\end{lemma}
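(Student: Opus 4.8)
The plan is to prove the orbit characterization first and then deduce transitivity on levels as a consequence. I would begin with the ``only if'' part of the second assertion, which rests entirely on the explicit description of the generators' action recorded just above the lemma. The key observation is that each generator displaces at most one letter of any infinite word: the generator $a$ changes only the first letter, while $b$, $c$, and $d$ either fix $\xi$ or alter solely the single letter following the first zero of $\xi$. Hence if $g=s_1s_2\dots s_k$ is any element of $\cG$ written as a product of $k$ generators (recall $a,b,c,d$ are involutions, so inverses are again generators), then applying the generators one at a time shows that $g(\xi)$ differs from $\xi$ in at most $k$ positions. Therefore any two words in the same $\be$-orbit differ in only finitely many letters.

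For the ``if'' direction I would reduce to words differing in exactly one position. If $\xi$ and $\eta$ differ in finitely many places, one interpolates a finite chain of infinite words, each obtained from its predecessor by flipping a single letter; since lying in a common orbit is an equivalence relation, it suffices to handle a one-letter change. Suppose then $\xi=wx\zeta$ and $\eta=w\bar x\zeta$ differ only at position $k$, where $w$ has length $k-1$ and $x\in X$. Here the self-replicating property (Lemma~\ref{grig1}) is the crucial tool: since $g\mapsto g|_w$ carries $\St_\al(w)$ onto $\cG$, there exists $g\in\St_\al(w)$ with $g|_w=a$. Using the section identity $g(w\rho)=g(w)\,g|_w(\rho)$ and $g(w)=w$, I get $g(\xi)=w\,a(x\zeta)=w\bar x\zeta=\eta$, because $a$ flips the first letter. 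Thus $\xi$ and $\eta$ share an orbit.

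Level transitivity then falls out of the ``if'' direction. Given words $u,v$ of length $n$, append a common tail to form $u0^\infty$ and $v0^\infty$; these differ in at most $n$ letters, so by the previous paragraph there is $g\in\cG$ with $g(u0^\infty)=v0^\infty$. Comparing the first $n$ letters of the two sides yields $\al_g(u)=v$, so $\cG$ acts transitively on level $n$.

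I expect the main obstacle to be the ``if'' direction, and specifically the realization that flipping a single letter at an arbitrary depth $k$ is precisely the act of planting a copy of the generator $a$ as a section beneath the prefix of length $k-1$. Once one sees that self-replicating permits any element of $\cG$ to be realized as a section at a prescribed prefix, the argument is immediate; note that transitivity on levels by itself would not suffice for the infinite-word statement, since one must also guarantee that the common tail beyond position $k$ is left undisturbed, which is exactly what choosing the section equal to $a$ secures.
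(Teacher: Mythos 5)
Your proof is correct, and it rests on the same key tool as the paper's --- the self-replicating property of Lemma \ref{grig1} --- but it organizes the argument in a genuinely different way. The paper proves a single claim by induction on $n$: for any words $w_1,w_2$ of length $n$ there exists $g\in\cG$ with $g(w_1\xi)=w_2\xi$ for all $\xi\in\dT$, i.e., $g$ carries the prefix $w_1$ to $w_2$ and has trivial section at $w_1$; the inductive step uses self-replication to push the element supplied by the inductive hypothesis one level down, composing with $a$ when the first letters disagree. Both level transitivity and the ``if'' direction then fall out of that one claim. You instead go straight at the boundary statement: you reduce to a single letter flip (using that lying in a common orbit is an equivalence relation), realize each flip by planting the generator $a$ as a section at the prescribed prefix --- a direct, induction-free appeal to self-replication --- and only afterwards recover level transitivity by appending a common tail $0^\infty$ and comparing prefixes, which is legitimate since tree automorphisms map the length-$n$ prefix of an infinite word to the length-$n$ prefix of its image. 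So the direction of deduction is reversed: the paper goes from finite words to infinite words, you go from infinite words to finite words. Your route is slightly leaner, while the paper's induction delivers the prefix-moving claim in a form that yields level transitivity with no extra step; both arguments are complete, and your closing observation --- that level transitivity alone cannot give the boundary statement because one must also leave the common tail undisturbed --- is precisely the point the paper addresses by building the condition $g|_{w_1}=\1$ into its inductive claim.
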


\begin{proof}
For any infinite word $\xi\in\dT$ and any generator $h\in\{a,b,c,d\}$ the
infinite word $h(\xi)$ differs from $\xi$ in at most one letter.  Any
$g\in\cG$ can be represented as a product $g=h_1h_2\dots h_k$, where each
$h_i$ is in $\{a,b,c,d\}$.  It follows that for any $\xi\in\dT$ the
infinite words $g(\xi)$ and $\xi$ differ in at most $k$ letters.  Thus any
two infinite words in the same orbit of the action $\be$ differ in only
finitely many letters.

Now we are going to show that for any finite words $w_1,w_2\in X^*$ of the
same length there exists $g\in\cG$ such that $g(w_1)=w_2$ and
$g|_{w_1}=\1$.  Equivalently, $g(w_1\xi)=w_2\xi$ for all $\xi\in\dT$.  This
will complete the proof of the lemma.  Indeed, the claim contains the
statement that the group $\cG$ acts transitively on each level of the tree
$\cT$.  Moreover, it implies that two infinite words in $\dT$ are in the
same orbit of the action $\be$ whenever they differ in a finite number of
letters.

We prove the claim by induction on the length $n$ of the words $w_1$ and
$w_2$.  The case $n=0$ is trivial.  Here $w_1$ and $w_2$ are the empty
words so that we take $g=\1$.  Now assume that the claim is true for all
pairs of words of specific length $n\ge0$ and consider words $w_1$ and
$w_2$ of length $n+1$.  Let $x_1$ be the first letter of $w_1$ and $x_2$ be
the first letter of $w_2$.  Then $w_1=x_1u_1$ and $w_2=x_2u_2$, where $u_1$
and $u_2$ are words of length $n$.  By the inductive assumption, there
exists $h\in\cG$ such that $h(u_1\xi)=u_2\xi$ for all $\xi\in\dT$.  Since
the group $\cG$ is self-replicating, there exists $g_0\in\cG$ such that
$g_0(x_1\eta)=x_1h(\eta)$ for all $\eta\in\dT$.  In particular,
$g_0(x_1u_1\xi)=x_1u_2\xi$ for all $\xi\in\dT$.  It remains to take $g=g_0$
if $x_2=x_1$ and $g=ag_0$ otherwise.  Then $g(x_1u_1\xi)=x_2u_2\xi$ for all
$\xi\in\dT$.
\end{proof}

\begin{lemma}\label{grig3}
Suppose $w_1$ and $w_2$ are words in the alphabet $\{0,1\}$ such that $w_1$
is not a beginning of $w_2$ while $w_2$, even with the last two letters
deleted, is not a beginning of $w_1$.  Then there exists $g\in\cG$ that
does not fix $w_2$ while fixing all words with beginning $w_1$.
\end{lemma}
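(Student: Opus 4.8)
The plan is to reduce to the case where $w_1$ and $w_2$ diverge already at their first letter, and then to build $g$ by hand out of the generators.

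First I would let $p$ be the longest common beginning of $w_1$ and $w_2$ and read off the local picture forced by the hypotheses. Since $w_1$ is not a beginning of $w_2$, the word $w_1$ strictly extends $p$; and the second hypothesis excludes $|w_2|\le|p|+2$ (otherwise $w_2$ with its last two letters removed would be a beginning of $p$, hence of $w_1$), so $w_2$ strictly extends $p$ as well and $|w_2|\ge|p|+3$. Because $p$ is the \emph{longest} common beginning, $w_1$ and $w_2$ carry different letters in position $|p|+1$. Removing $p$, I obtain $\tilde w_1=a\,s_1$ and $\tilde w_2=\bar a\,s_2$ with $a\ne\bar a$ and $|s_2|\ge2$, and one checks that $\tilde w_1,\tilde w_2$ again satisfy both hypotheses. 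Now self-replication (Lemma \ref{grig1}) lets me lift: given $f\in\cG$ fixing every word beginning with $\tilde w_1$ and moving $\tilde w_2$, I pick $g\in\St_\al(p)$ with $g|_p=f$, so that $g(pv)=p\,f(v)$ for all $v$; then $g$ fixes every word beginning with $w_1=p\tilde w_1$ while $g(w_2)=p\,f(\tilde w_2)\ne w_2$. Thus it suffices to treat the case $p=\varnothing$.

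In that case I would keep the whole $a$-subtree fixed and perturb $w_2$ further down inside the $\bar a$-subtree. Write $s_2=t\,s_2''$ with $t\in\{0,1\}$ and $s_2''$ nonempty (possible since $|s_2|\ge2$), and set $h=b$ when $t=0$ and $h=aba$ when $t=1$. Both $b=(a,c)$ and $aba=(c,a)$ fix the first level and have section $a$ at the letter $t$, so $h(s_2)=t\cdot a(s_2'')\ne s_2$. It remains to place the corresponding one-sided automorphism into $\cG$: when $\bar a=1$ I take $g=(\1,h)$, which is in $\cG$ because $(\1,b)=d$ and $(\1,aba)=(aba)\,d\,(aba)$; when $\bar a=0$ I take $g=(h,\1)$, which is in $\cG$ because $(b,\1)=ada$ and $(aba,\1)=b\,(ada)\,b$. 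All four identities follow from the wreath recursions recorded in the proof of Lemma \ref{grig1} together with $\{\1,b,c,d\}$ being a Klein four-group. Such a $g$ fixes the entire $a$-subtree, hence every word beginning with $w_1$, while $g(\bar a\,s_2)=\bar a\,h(s_2)\ne w_2$.

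The prefix reduction is routine; the step needing genuine care is the last one, namely certifying that the one-sided elements $(\1,aba)$ and $(aba,\1)$---not merely the obvious $d=(\1,b)$ and $ada=(b,\1)$---actually lie in $\cG$, since $\cG$ is far smaller than the full automorphism group of the tree. This is exactly where the second hypothesis pays off: the elements $b$ and $aba$ fix the first two letters of $w_2$ and move only letters strictly deeper, so they can disturb $w_2$ only once $w_2$ reaches at least two levels past the branch vertex---precisely the configuration that the condition $|s_2|\ge2$ guarantees.
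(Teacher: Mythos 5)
Your proof is correct, and while it shares its skeleton with the paper's proof, it handles the key step by a genuinely different mechanism. Both arguments strip the longest common prefix $p$ in the same way, using self-replication (Lemma \ref{grig1}) to lift a solution for the truncated words $\tilde w_1,\tilde w_2$ back to $w_1,w_2$; your bookkeeping showing $|w_2|\ge|p|+3$ and that the truncated pair again satisfies the hypotheses matches the paper's. The difference is in the reduced case. The paper normalizes further: it first treats the single instance $w_2=100$ (where $g=d$ works, since $d$ fixes the $0$-subtree and $d(100)=101$), and then disposes of an arbitrary length-$3$ word $w_2$ by conjugating $d$ with an element $h\in\cG$ sending $w_2$ to $100$, which exists by transitivity on level $3$ (Lemma \ref{grig2}). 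You avoid any appeal to transitivity and instead run a four-way case analysis on the first two letters of $\tilde w_2$, exhibiting the one-sided elements $(\1,b)=d$, $(\1,aba)=(aba)d(aba)$, $(b,\1)=ada$, and $(aba,\1)=b(ada)b$ and certifying their membership in $\cG$ by direct wreath-recursion computations (all of which check out, using $c^2=\1$ and the recursions $b=(a,c)$, $d=(\1,b)$, $aba=(c,a)$, $ada=(b,\1)$ recorded in the proof of Lemma \ref{grig1}). Note that your four elements are exactly conjugates of $d$ by elements fixing the relevant subtree, so the two proofs ultimately perturb $w_2$ with the same germ; the paper's conjugation-by-transitivity is shorter and needs only one explicit computation, while your version is more constructive and self-contained, trading Lemma \ref{grig2} for explicit identities in $\cG$. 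Your closing remark about why the hypothesis forces the perturbation to occur at depth at least three past the branch vertex is also accurate and is the same phenomenon that makes $d(100)\ne 100$ the decisive computation in the paper.
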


\begin{proof}
First we consider a special case when $w_2=100$.  To satisfy the assumption
of the lemma, the word $w_1$ has to begin with $0$.  Then we can take
$g=d$.  Indeed, the transformation $d$ fixes all words that begin with $0$,
which includes all words with beginning $w_1$.  At the same time,
$d(100)=1b(00)=10a(0)=101\ne100$.

Next we consider a slightly more general case when $w_2$ is an arbitrary
word of length $3$.  By Lemma \ref{grig2}, the group $\cG$ acts
transitively on the third level of the tree $\cT$.  Therefore $h(w_2)=100$
for some $h\in\cG$.  The words $h(w_1)$ and $h(w_2)$ satisfy the assumption
of the lemma since the words $w_1$ and $w_2$ do.  By the above, $dh(w_2)\ne
h(w_2)$ while $d(h(w_1)u)=h(w_1)u$ for all $u\in X^*$.  Let $g=h^{-1}dh$.
Then $g(w_2)\ne w_2$ while $g(w_1w)=w_1w$ for all $w\in X^*$.

Finally, consider the general case.  Let $w_0$ be the longest common
beginning of the words $w_1$ and $w_2$.  Then $w_1=w_0u_1$ and
$w_2=w_0u_2$, where the words $u_1$ and $u_2$ also satisfy the assumption
of the lemma.  In particular, $u_1$ is nonempty and the length of $u_2$ is
at least $3$.  We have $u_2=u'_2u''_2$, where $u'_2,u''_2\in X^*$ and the
length of $u'_2$ is $3$.  Since the first letters of the words $u_1$ and
$u'_2$ are distinct, these words satisfy the assumption of the lemma.  By
the above there exists $g_0\in\cG$ such that $g_0(u'_2)\ne u'_2$ and
$g_0(u_1u)=u_1u$ for all $u\in X^*$.  Since the group $\cG$ is
self-replicating, there exists $g\in\cG$ such that $g(w_0w)=w_0g_0(w)$ for
all $w\in X^*$.  Then $g$ does not fix the word $w_0u'_2$ while fixing all
words with beginning $w_1$.  Since $w_0u'_2$ is a beginning of $w_2$, the
transformation $g$ does not fix $w_2$ as well.
\end{proof}

\begin{lemma}\label{grig4}
For any distinct points $\xi,\eta\in\dT$ the neighborhood stabilizer
$\St^o_\be(\xi)$ is not contained in $\St_\be(\eta)$.
\end{lemma}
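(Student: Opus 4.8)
**The plan is to show that for any two distinct boundary points $\xi,\eta\in\dT$ there is an element of $\cG$ that fixes a whole neighborhood of $\xi$ (hence lies in $\St^o_\be(\xi)$) but moves $\eta$ (hence escapes $\St_\be(\eta)$).** A neighborhood of $\xi$ in $\dT$ is a cylinder set, i.e., the set of all infinite words sharing a sufficiently long common beginning $w_1$ with $\xi$. So an element $g\in\cG$ fixing every infinite word with beginning $w_1$ is precisely an element fixing the neighborhood of $\xi$ determined by $w_1$, and thus $g\in\St^o_\be(\xi)$. Therefore it suffices to produce, for a suitable beginning $w_1$ of $\xi$ and a suitable beginning $w_2$ of $\eta$, an element $g$ that fixes all words with beginning $w_1$ while not fixing $w_2$; then $g(\eta)\ne\eta$ because $g$ alters the finite beginning $w_2$. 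This is exactly the output guaranteed by Lemma~\ref{grig3}, so the whole argument reduces to arranging that the hypotheses of that lemma can be met.

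Next I would show that, since $\xi\ne\eta$, we can choose beginnings $w_1$ of $\xi$ and $w_2$ of $\eta$ satisfying the two conditions of Lemma~\ref{grig3}: that $w_1$ is not a beginning of $w_2$, and that $w_2$ with its last two letters deleted is not a beginning of $w_1$. Because $\xi$ and $\eta$ are distinct infinite words, they first disagree at some position; let $u$ be their longest common beginning, so $\xi=u\,0\dots$ and $\eta=u\,1\dots$ (or vice versa). The plan is to take $w_1$ to be a beginning of $\xi$ that extends $u$ by at least one letter past the point of disagreement, and $w_2$ to be a beginning of $\eta$ that is long enough — specifically, chosen so that $w_2$ has length at least three more than needed — to guarantee the second condition after deleting two letters. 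Since $w_1$ follows $\xi$ and $w_2$ follows $\eta$, and these diverge at position $|u|+1$, neither can be a beginning of the other once each extends past $u$; taking $w_2$ with three extra trailing letters ensures that even $w_2$ truncated by two letters still disagrees with $w_1$ at the divergence point and so is not a beginning of $w_1$.

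Having fixed such $w_1,w_2$, I would invoke Lemma~\ref{grig3} to obtain $g\in\cG$ with $g(w_1w)=w_1w$ for all $w\in X^*$ and $g(w_2)\ne w_2$. The first property says $g$ fixes every infinite word beginning with $w_1$, i.e., $g$ fixes the cylinder neighborhood of $\xi$, so $g\in\St^o_\be(\xi)$. The second property, combined with the fact that $g$ acts on an infinite word by transforming its beginning of the appropriate length, gives $g(\eta)\ne\eta$, so $g\notin\St_\be(\eta)$. This exhibits an element of $\St^o_\be(\xi)$ not in $\St_\be(\eta)$, which is the desired conclusion.

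**The main obstacle is the bookkeeping in the second paragraph:** verifying that the two asymmetric conditions of Lemma~\ref{grig3} can both be satisfied simultaneously by a single choice of $w_1,w_2$. The first condition (``$w_1$ not a beginning of $w_2$'') is easy once $w_1$ extends past the divergence point, but the second (``$w_2$ minus its last two letters not a beginning of $w_1$'') is the slightly delicate one, since deleting two letters could in principle make a beginning of $w_2$ coincide with $w_1$. The resolution is to choose the lengths carefully — making $w_2$ at least two letters longer beyond the divergence than $w_1$ is — so that the truncation still retains the letter at which $\xi$ and $\eta$ disagree; I expect this length comparison to be the only point requiring genuine care.
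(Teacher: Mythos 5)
Your proposal is correct and follows essentially the same route as the paper's proof: the paper also reduces to Lemma~\ref{grig3} by taking $w_1$ to be the beginning of $\xi$ of length $n+1$ and $w_2$ the beginning of $\eta$ of length $n+3$, where $n$ is the length of the longest common beginning, and then concludes exactly as you do. Your length bookkeeping (ensuring $w_2$ truncated by two letters still reaches the divergence position) matches the paper's choice, so there is no gap.
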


\begin{proof}
Let $n$ denote the length of the longest common beginning of the distinct
infinite words $\xi$ and $\eta$.  Let $w_1$ be the beginning of $\xi$ of
length $n+1$ and $w_2$ be the beginning of $\eta$ of length $n+3$.  It is
easy to see that the words $w_1$ and $w_2$ satisfy the assumption of Lemma
\ref{grig3}.  Therefore there exists a transformation $g\in\cG$ that does
not fix $w_2$ while fixing all finite words with beginning $w_1$.  Clearly,
the action of $g$ on $\dT$ fixes all infinite words with beginning $w_1$.
As such infinite words form an open neighborhood of the point $\xi$, we
have $g\in\St^o_\be(\xi)$.  At the same time, $g$ does not fix the infinite
word $\eta$ since it does not fix its beginning $w_2$.  Hence
$g\notin\St_\be(\eta)$ so that $\St^o_\be(\xi)\not\subset\St_\be(\eta)$.
\end{proof}

\begin{lemma}\label{grig5}
$\St^o_\be(\xi)=\St_\be(\xi)$ for any infinite word $\xi\in\dT$ containing
infinitely many zeros.
\end{lemma}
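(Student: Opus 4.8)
The plan is to prove the nontrivial inclusion, since $\St^o_\be(\xi)\subset\St_\be(\xi)$ holds for every point. So fix $g\in\St_\be(\xi)$, write $\xi=x_1x_2x_3\dots$ with $x_i\in\{0,1\}$, and let $w_m=x_1x_2\dots x_m$ be its prefix of length $m$ and $\eta_m$ the infinite word obtained from $\xi$ by deleting $x_1,\dots,x_m$. Because $g$ is a tree automorphism, the length-$m$ prefix of $g(\xi)$ is $g(w_m)$; hence $g(\xi)=\xi$ forces $g(w_m)=w_m$ for all $m$, and then $g(\xi)=g(w_m)\,g|_{w_m}(\eta_m)=w_m\,g|_{w_m}(\eta_m)$ shows that the section $g|_{w_m}\in\cG$ fixes the tail $\eta_m$. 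The key reduction is that it suffices to exhibit a single prefix $w$ of $\xi$ with $g|_w=\1$: for such $w$ one has $g(w\zeta)=w\,g|_w(\zeta)=w\zeta$ for every $\zeta\in\dT$, so $g$ fixes the whole cylinder of words beginning with $w$, which is an open neighborhood of $\xi$, and therefore $g\in\St^o_\be(\xi)$.

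The engine of the argument is the contraction property of $\cG$: the set $\{\1,a,b,c,d\}$ is closed under taking sections ($a|_x=\1$, $b|_0=a$, $b|_1=c$, $c|_0=a$, $c|_1=d$, $d|_0=\1$, $d|_1=b$) and is the nucleus of the group, so there is an integer $M$ such that $g|_w\in\{\1,a,b,c,d\}$ for every word $w$ of length at least $M$. I would invoke this as a standard feature of the Grigorchuk group (see \cite{G1}); it is exactly the step I expect to be the main obstacle, and the delicate point is that it genuinely requires passing to deep levels rather than a first-level estimate. Indeed, no naive length bound works at the first level: for $g=(ab)^{2k}$ the two first-level sections have the same total length as $g$, and contraction only surfaces after several levels (for instance $(ad)^2$ has both sections equal to $b$). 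This is why the clean induction ``replace $g$ by a strictly shorter first-level section'' fails and one must appeal to the full contracting behaviour.

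Granting this, the finish uses the hypothesis that $\xi$ has infinitely many zeros. I would choose $m\ge M$ with $x_{m+1}=0$, which exists precisely because the zeros are infinite in number, and set $h=g|_{w_m}$. By the previous paragraph $h\in\{\1,a,b,c,d\}$, and by the reduction $h$ fixes $\eta_m$, which begins with $0$. Now the elementary description of the generators rules out all but two possibilities: $a$ changes the first letter and so fixes no infinite word; $b$ and $c$ change the letter following an initial $0$ and so move any word beginning with $0$. Hence $h\in\{\1,d\}$. Since $\1|_0=\1$ and $d|_0=\1$, in either case $g|_{w_{m+1}}=h|_0=\1$. Taking $w=w_{m+1}$ in the reduction of the first paragraph gives $g\in\St^o_\be(\xi)$, and as $g$ was an arbitrary element of $\St_\be(\xi)$ we conclude $\St_\be(\xi)=\St^o_\be(\xi)$.
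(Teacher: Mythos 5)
Your argument is correct, but it runs on a different engine than the paper's. The reduction is shared: since $g(\xi)=\xi$ forces $g$ to fix every prefix of $\xi$, it suffices to produce one prefix $w$ with $g|_w=\1$, and then $g$ fixes the whole cylinder over $w$, an open neighborhood of $\xi$. Where you diverge is in producing that prefix. You import the contraction property of $\cG$ from the literature --- every sufficiently deep section of $g$ lies in the nucleus $\{\1,a,b,c,d\}$ --- and then finish with a short case analysis: taking a prefix $w_m$ with $m\ge M$ followed by a zero, the section $h=g|_{w_m}$ lies in the nucleus and fixes a tail beginning with $0$, which rules out $a$, $b$, $c$, and then $h|_0=\1$ whether $h=\1$ or $h=d$; this finish is correct. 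The paper never invokes contraction. It proves, self-containedly, the stronger-looking claim that for \emph{every} $g\in\cG$ (no stabilizer hypothesis) and every $\xi$ with infinitely many zeros, $g|_w=\1$ for all sufficiently long prefixes $w$ of $\xi$. Its proof is an induction over the group itself: the set $R$ of elements with this property contains the generators, by the explicit computations $a|_w=\1$ for nonempty $w$ and $b|_w=c|_w=d|_w=\1$ whenever $w$ has a non-final zero, and $R$ is closed under multiplication via the chain rule $(gh)|_w=g|_{h(w)}h|_w$ together with Lemma \ref{grig2}, which guarantees that $h(\xi)$ again has infinitely many zeros; hence $R=\cG$. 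Your route is modular and makes the role of the hypothesis crisp (you need only one zero past the contraction depth $M$), but the fact you cite is itself a nontrivial theorem about $\cG$, of roughly the same depth as the induction it replaces --- you correctly flag it as the crux --- so your proof is shorter only if contraction is taken off the shelf, whereas the paper's tailored induction needs nothing beyond the wreath recursions and Lemma \ref{grig2}.
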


\begin{proof}
We are going to show that, given an automorphism $g\in\cG$ and an infinite
word $\xi\in\dT$ with infinitely many zeros, one has $g|_w=\1$ for a
sufficiently long beginning $w$ of $\xi$.  This claim implies the lemma.
Indeed, in the case $g(\xi)=\xi$ the action of $g$ fixes all infinite words
with beginning $w$, which form an open neighborhood of $\xi$.

Let $R$ be the set of all $g\in\cG$ such that the claim holds true for $g$
and any $\xi\in\dT$ with infinitely many zeros.  The set $R$ contains the
generating set $S$.  Indeed, $a|_w=\1$ for any nonempty word $w\in X^*$ and
$b|_w=c|_w=d|_w=\1$ for any word $w$ that contains a zero which is not the
last letter of $w$.  Now suppose $g,h\in R$ and consider an arbitrary
$\xi\in\dT$ with infinitely many zeros.  Then $h|_w=\1$ for a sufficiently
long beginning $w$ of $\xi$.  Lemma \ref{grig2} implies that the infinite
word $h(\xi)$ also has infinitely many zeros.  Since $h(w)$ is a beginning
of $h(\xi)$ and $g\in R$, we have $g|_{h(w)}=\1$ provided $w$ is long
enough.  Since $(gh)|_w=g|_{h(w)}h|_w$, we have $(gh)|_w=\1$ provided $w$
is long enough.  Thus $gh\in R$.  That is, the set $R$ is closed under
multiplication.  Since $S\subset R$ and all generators are involutions, it
follows that $R=\cG$.
\end{proof}

The infinite word $\xi_0=111\dots$ (also denoted $1^\infty$) is an
exceptional point for the action $\be$.

\begin{lemma}\label{grig6}
The quotient of $\St_\be(\xi_0)$ by $\St^o_\be(\xi_0)$ is the Klein
$4$-group.  The coset representatives are $\1,b,c,d$.
\end{lemma}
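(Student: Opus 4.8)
The plan is to show that the four elements $\1,b,c,d$ lie in four distinct cosets of $\St^o_\be(\xi_0)$ in $\St_\be(\xi_0)$ and that these cosets exhaust the stabilizer. The quotient then has order $4$ and, being generated by three commuting involutions $\bar b,\bar c,\bar d$ satisfying $\bar b\bar c=\bar d$, must be the Klein $4$-group (a cyclic group of order $4$ has only one involution).

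First I would record, using the description of the action of the generators given just before Lemma \ref{grig1}, that $b,c,d\in\St_\be(\xi_0)$ because $\xi_0=1^\infty$ contains no zeros, whereas $a\notin\St_\be(\xi_0)$. To see $b,c,d\notin\St^o_\be(\xi_0)$, I would use that membership in $\St^o_\be(\xi_0)$ means fixing a whole cylinder $[1^N]=\{1^N\eta\}$ pointwise; but for every $N$ at least one of the words $1^N0\dots$ or $1^{N+1}0\dots$ has its first zero in a position not $\equiv0\pmod 3$, hence is moved by $b$ (and similarly for $c,d$ with the residues $2$ and $1$). So none of $b,c,d$ fixes any neighborhood of $\xi_0$. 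Since $\{\1,b,c,d\}$ is the Klein $4$-group, for distinct $x,y$ in this set we have $x^{-1}y\in\{b,c,d\}$, so the non-membership just proved shows the four cosets are pairwise distinct.

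The heart of the matter is that there are no further cosets, i.e. every $g\in\St_\be(\xi_0)$ lies in one of the four. I would translate this into the language of sections along the ray $1^\infty$. Writing $P_n=g|_{1^n}$, the fact that $g$ fixes $\xi_0$ gives $g(1^n)=1^n$, so each $P_n$ is well defined, $P_{n+1}=P_n|_1$, and each $P_n$ again lies in $\St_\be(\xi_0)$. Using $(hg)|_{1^n}=h|_{g(1^n)}\,g|_{1^n}=h|_{1^n}\,g|_{1^n}$ and the fact that $b|_{1^n},c|_{1^n},d|_{1^n}$ cycle through $\{b,c,d\}$, one checks that $hg\in\St^o_\be(\xi_0)$ for a suitable $h\in\{\1,b,c,d\}$ exactly when $P_n\in\{\1,b,c,d\}$ for some $n$. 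Thus the whole statement reduces to the single claim: for every $g\in\St_\be(\xi_0)$ some ray-section $g|_{1^n}$ belongs to $\{\1,b,c,d\}$.

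To prove this claim I would invoke the contracting property of $\cG$: iterated sections of any element eventually land in the finite self-similar set $\{\1,a,b,c,d\}$ (the nucleus). Applying this along the ray gives $g|_{1^n}\in\{\1,a,b,c,d\}$ for all large $n$; but $g|_{1^n}\in\St_\be(\xi_0)$ cannot equal $a$, since $a$ moves $1^\infty$, so $g|_{1^n}\in\{\1,b,c,d\}$, as required. This last input is exactly where I expect the difficulty to lie, since it is not among the earlier results: I would have to establish it from the word-length contraction $|g|_0|+|g|_1|\le\eta|g|+C$ with $\eta<1$, which forces the lengths $|g|_{1^n}|$ to become bounded and the sequence $(g|_{1^n})_n$ to be eventually periodic under $\psi(\cdot)=(\cdot)|_1$, followed by a finite verification that the only short elements of $\St_\be(\xi_0)$ lying on a $\psi$-cycle are $\1,b,c,d$. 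Setting up the contraction estimate cleanly and discharging this bounded-length base case is the delicate part; once the claim holds, the counting above yields index $4$ and the Klein $4$-group structure with representatives $\1,b,c,d$.
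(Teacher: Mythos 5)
Your proposal is correct in outline, but it proves the key inclusion $\St_\be(\xi_0)\subset\St^o_\be(\xi_0)\,\{\1,b,c,d\}$ by a genuinely different route. The distinctness of the four cosets is handled the same way in both arguments (words with a single zero near $\xi_0$ are moved by $b$, $c$, or $d$ according to the residue mod $3$ of the zero's position). For the inclusion, the paper runs a strong induction on the word length of $g=s_n\cdots s_1\in\St_\be(\xi_0)$: if some intermediate point $\xi_k=(s_k\cdots s_1)(\xi_0)$, $0<k<n$, equals $\xi_0$, then $g$ factors into two shorter stabilizer elements and induction plus normality of $\St^o_\be(\xi_0)$ finishes; if not, then each letter $s_{i+1}$ has trivial section at a sufficiently long beginning of $\xi_i$ (every $\xi_i$ with $i\ge 1$ contains a zero), whence $g|_w=\1$ for a long beginning $w$ of $\xi_0$ and $g\in\St^o_\be(\xi_0)$. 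That argument is self-contained, using only facts about the generators already established. Your reduction of the inclusion to the claim that some ray-section $g|_{1^n}$ lies in $\{\1,b,c,d\}$ is correct and tidy (the equivalence you assert does hold, by the computation $(hg)|_{1^n}=h|_{1^n}g|_{1^n}$ and the fact that $h\mapsto h|_{1^n}$ permutes $\{b,c,d\}$ cyclically), but the claim itself is then discharged by the contracting property of $\cG$ with nucleus $\{\1,a,b,c,d\}$ --- true and standard, yet proved nowhere in the paper. Your approach buys a conceptually transparent statement (the nucleus elements fixing $\xi_0$ are exactly $\1,b,c,d$) at the price of importing machinery the paper's induction deliberately avoids.

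One concrete correction to your sketch of that machinery: the level-one sum estimate $|g|_0|+|g|_1|\le\eta|g|+C$ with $\eta<1$ is not the right statement. For $g=abab$ one gets $g|_0=ca$ and $g|_1=ac$, so the section lengths sum to exactly $|g|$; at level one only $|g|_0|+|g|_1|\le|g|+1$ holds. What you actually need is the individual bound $|g|_x|\le(|g|+1)/2$ for $x\in\{0,1\}$, which follows at once from the reduced form alternating $a$'s with letters from $\{b,c,d\}$: each $\{b,c,d\}$-letter contributes exactly one letter to each section and each $a$ contributes none. This bound makes $|g|_{1^n}|$ strictly decrease as long as it is at least $2$, so $g|_{1^n}$ eventually lands in the self-similar set $\{\1,a,b,c,d\}$ and stays there; since $g|_{1^n}$ fixes $\xi_0$ while $a$ does not, it lies in $\{\1,b,c,d\}$, and your eventual-periodicity fallback and finite check of cycles become unnecessary. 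With this substitution your proof is complete.
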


\begin{proof}
Recall that $H=\{\1,b,c,d\}$ is a subgroup of $\cG$ isomorphic to the Klein
$4$-group.  Clearly, $H\subset\St_\be(\xi_0)$.  We are going to show that
$H\cap\St^o_\be(\xi_0)=\{\1\}$ and $\St_\be(\xi_0)=\St^o_\be(\xi_0)H$,
which implies the lemma.

For any positive integer $n$ let $\eta_n$ denote the infinite word over the
alphabet $X$ that has a single zero in the position $n$.  The sequence
$\eta_1,\eta_2,\dots$ converges to $\xi_0$.  One observes that any of the
generators $b$, $c$, and $d$ fixes $\eta_n$ only if $n$ leaves a specific
remainder under division by $3$ ($0$ for $b$, $2$ for $c$, and $1$ for
$d$).  It follows that $H\cap\St^o_\be(\xi_0)=\{\1\}$.

Now let us show that any $g\in\St_\be(\xi_0)$ is contained in the set
$\St^o_\be(\xi_0)H$.  The proof is by strong induction on the length $n$ of
$g$, which is the smallest possible number of factors in an expansion
$g=s_m\dots s_2s_1$ such that each $s_i\in S$.  The case $n=0$ is trivial
as $\1$ is the only element of length $0$.  Assume that the claim is true
for all elements of length less than some $n>0$ and consider an arbitrary
element $g\in\St_\be(\xi_0)$ of length $n$.  We have $g=s_n\dots s_2s_1$,
where each $s_i$ is a generator from $S$.  Let
$\xi_k=(s_k\dots s_2s_1)(\xi_0)$, $k=1,2,\dots,n$.  If $\xi_k=\xi_0$ for
some $0<k<n$, then $g_1=s_n\dots s_{k+1}$ and $g_2=s_k\dots s_2s_1$ both
fix $\xi_0$.  Since the length of $g_1$ and $g_2$ is less than $n$, they
belong to $\St^o_\be(\xi_0)H$ by the inductive assumption.  As
$\St^o_\be(\xi_0)H$ is a group, so does $g=g_1g_2$.  If $\xi_k\ne\xi_0$ for
all $0<k<n$, then $s_{i+1}|_{w_i}=\1$ for any $0\le i<n$ and sufficiently
long beginning $w_i$ of the infinite word $\xi_i$.  It follows that
$g|_w=\1$ for a sufficiently long beginning $w$ of $\xi_0$.  Thus
$g\in\St^o_\be(\xi_0)$.
\end{proof}

Recall that we consider the Schreier graphs of the group $\cG$ relative to
the generating set $S=\{a,b,c,d\}$ as graphs with undirected edges.  The
Schreier graphs of all orbits of the action $\be$ except $O_\be(\xi_0)$ are
similar.  Any vertex is joined to two other vertices.  Moreover, it is
joined to one of the neighbors by a single edge labeled $a$ and to the
other neighbor by two edges.  Also, there is one loop at each vertex.
Hence the Schreier graph has a linear structure (see Figure \ref{fig1}) and
all such graphs are isomorphic as graphs with unlabeled edges.  The
Schreier graph of the orbit of $\xi_0=1^\infty$ is different in that there
are three loops labeled $b$, $c$, and $d$ at the vertex $\xi_0$ (see Figure
\ref{fig2}).

Let $F:\dT\to\Sch(\cG,S)$ be the mapping that assigns to any point on the
boundary of the binary rooted tree $\cT$ its marked Schreier graph under
the action $\be$.  Using notation of Section \ref{sch},
$F(\xi)=\Ga_{\Sch}^*(\cG,S;\be,\xi)$ for all $\xi\in\dT$.

\begin{lemma}\label{grig7}
The graph $F(\xi_0)$ is an isolated point in the image $F(\dT)$.
\end{lemma}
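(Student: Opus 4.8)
The plan is to separate $F(\xi_0)$ from every other graph in the image already at the coarsest level, namely the ball of radius $0$ about the marked vertex, exploiting that $\xi_0=1^\infty$ is simultaneously fixed by $b$, $c$, and $d$. Concretely, I would show that the marked vertex of $F(\xi_0)$ is the endpoint of three loops, whereas the marked vertex of $F(\xi)$ carries exactly one loop for every $\xi\ne\xi_0$. Since the number of loops at the marked vertex is an isomorphism invariant already recorded by $\oB_\Ga(v,0)$, this forces $\de(F(\xi_0),F(\xi))=1$ for all $\xi\ne\xi_0$, whence isolation is immediate.

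First I would read off the loops at $\xi_0$. In a Schreier graph the edge labeled $s\in S$ at a vertex $w$ is a loop precisely when $s$ fixes $w$. The word $\xi_0=1^\infty$ contains no zeros, so by the description of the action each of $b$, $c$, $d$ fixes $\xi_0$, producing three loops, while $a$ flips the first letter and hence fixes no infinite word. Thus $\oB_{F(\xi_0)}(\xi_0,0)$ is a single marked vertex bearing exactly three loops, labeled $b$, $c$, $d$.

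Next I would treat an arbitrary $\xi\ne\xi_0$. Such a $\xi$ has at least one zero, and for an infinite word the clause ``the only zero is the last letter'' never applies; hence exactly one of $b,c,d$ fixes $\xi$ (the one selected by the residue modulo $3$ of the position of the first zero), and $a$ again fixes nothing. Therefore the marked vertex of $F(\xi)$ is the endpoint of exactly one loop, so $\oB_{F(\xi)}(\xi,0)$ is a single marked vertex carrying one loop.

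Finally, the radius-$0$ balls of $F(\xi_0)$ and of $F(\xi)$ differ in their number of loops ($3$ versus $1$), so they are non-isomorphic; by the definition of $\de$ this gives $\de(F(\xi_0),F(\xi))=2^{0}=1$ for every $\xi\ne\xi_0$. Consequently the $\de$-ball of radius $1/2$ about $F(\xi_0)$ meets $F(\dT)$ only in $F(\xi_0)$ itself, which is exactly the assertion that $F(\xi_0)$ is isolated in $F(\dT)$. I expect no genuine obstacle here: the single point requiring care is that no infinite word other than $\xi_0$ is fixed by all of $b$, $c$, $d$ at once, and this is immediate from the recorded action of the generators.
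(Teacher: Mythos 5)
Your proof is correct and is essentially the paper's own argument: the paper isolates $F(\xi_0)$ by the basis neighborhood $\cU(\Ga_0,\emptyset)$, where $\Ga_0$ is a single marked vertex with three loops labeled $b$, $c$, $d$, which records exactly the same radius-zero data you use (only $\xi_0=1^\infty$ is fixed by all of $b$, $c$, $d$, while $a$ fixes no infinite word). Your metric formulation via $\de(F(\xi_0),F(\xi))=1$ is just an equivalent packaging of that neighborhood, as Lemma~\ref{graph3} identifies such basis sets with balls of the metric $\de$.
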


\begin{proof}
Let $\Ga_0$ denote the marked graph with a single vertex and three loops
labeled $b$, $c$, and $d$.  Recall that $\cU(\Ga_0,\emptyset)$ is an open
subset of $\MG_0$ consisting of all graphs in $\MG_0$ that have a subgraph
isomorphic to $\Ga_0$.  Hence $\cU(\Ga_0,\emptyset)\cap\Sch(\cG,S)$ is an
open subset of $\Sch(\cG,S)$.  Given $\xi\in\dT$, the graph $F(\xi)$
belongs to that open subset if and only if $a(\xi)\ne\xi$ and
$b(\xi)=c(\xi)=d(\xi)=\xi$.  The latter conditions are satisfied only for
$\xi=\xi_0$.  The lemma follows.
\end{proof}

\begin{figure}[t]
\centerline{\includegraphics{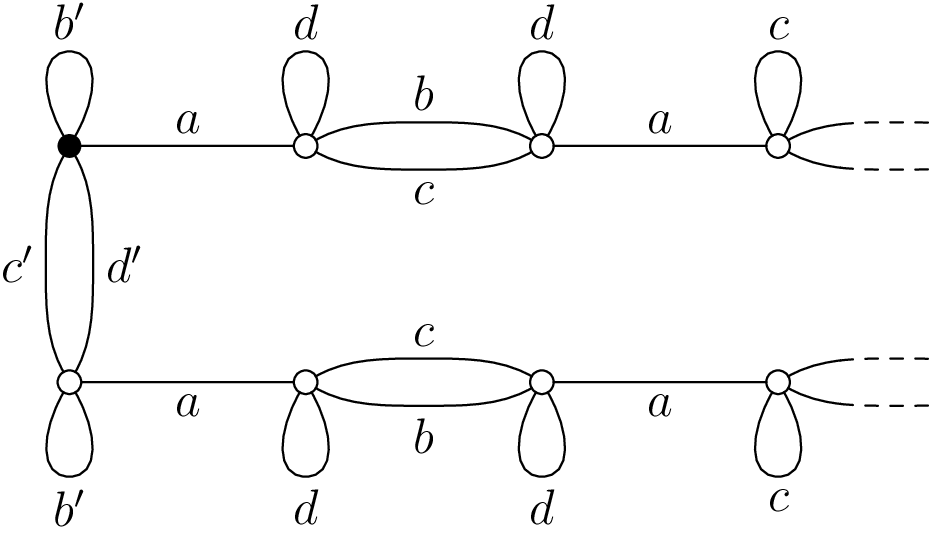}}
\caption{Limit graphs $\De^*_0,\De^*_1,\De^*_2$.}
\label{fig3}
\end{figure}

It turns out that the image $F(\dT)$ is not closed in $\Sch(\cG,S)$.  The
following construction will help to describe the closure of $F(\dT)$.  Let
us take two copies of the Schreier graph $\Ga_{\Sch}(\cG,S;\be,\xi_0)$.  We
remove two out of three loops at the vertex $\xi_0$ (loops with the same
labels in both copies) and replace them with two edges joining the two
copies.  Let $c'$ and $d'$ denote labels of the removed loops and $b'$
denote the label of the retained loop.  Then $b',c',d'$ is a permutation
of $b,c,d$.  To be rigorous, the new graph has the vertex set
$O_\be(\xi_0)\times\{0,1\}$, the set of edges $O_\be(\xi_0)\times\{0,1\}
\times S$, and the set of labels $S$.  An arbitrary edge $(\xi,i,s)$ has
beginning $(\xi,i)$ and label $s$.  The end of this edge is $(s(\xi),i)$
unless $\xi=\xi_0$ and $s=c'$ or $s=d'$, in which case the end is
$(s(\xi),1-i)=(\xi_0,1-i)$.  There are three ways to perform the above
construction depending on the choice of $b'$.  We denote by $\De_0$,
$\De_1$, and $\De_2$ the graphs obtained when $b'=b$, $b'=d$, and $b'=c$,
respectively.  Further, for any $i\in\{0,1,2\}$ we denote by $\De^*_i$ a
marked graph obtained from $\De_i$ by marking the vertex $(\xi_0,0)$ (see
Figure \ref{fig3}).  

Consider an arbitrary sequence of points $\eta_1,\eta_2,\dots$ in $\dT$
such that $\eta_n\to\xi_0$ as $n\to\infty$, but $\eta_n\ne\xi_0$.  Let
$z_n$ denote the position of the first zero in the infinite word $\eta_n$.

\begin{lemma}\label{grig8}
The marked Schreier graphs $F(\eta_n)$ converge to $\De^*_i$, $0\le i\le
2$, as $n\to\infty$ if $z_n\equiv i\bmod3$ for large $n$.
\end{lemma}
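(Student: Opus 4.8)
The plan is to show that for every fixed radius $r\ge0$ the closed balls $\oB_{F(\eta_n)}(\eta_n,r)$ and $\oB_{\De^*_i}((\xi_0,0),r)$ become isomorphic once $n$ is large; by the definition of the distance $\de$ this gives $F(\eta_n)\to\De^*_i$. First I would record the local geometry of the graphs involved. From the description of the action preceding Lemma \ref{grig1}, the orbit of $\xi_0$ yields a one-ended Schreier graph (a ray) with vertices $v_0=\xi_0,v_1=01^\infty,v_2,\dots$, in which every $v_m$ with $m\ge1$ carries a single loop together with a double edge, their labels being dictated by the position of the first zero of $v_m$; and $\De_i$ is precisely the bi-infinite graph obtained by gluing two such rays at their endpoints through the junction double edge (labels $c',d'$) and the two retained loops (label $b'$). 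Since $\eta_n\to\xi_0$ with $\eta_n\ne\xi_0$, the first-zero position $z_n$ tends to infinity.

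Next I would analyse $F(\eta_n)$ near the marked vertex $\eta_n=1^{\,k-1}0\tau$, where $k=z_n$ and $\tau=\epsilon_{k+1}\epsilon_{k+2}\cdots$. Here the edge labelled $a$ leads to $a(\eta_n)=01^{\,k-2}0\tau$, the loop has the label determined by $k\bmod3$, and the double edge flips position $k+1$, landing on $\eta_n'=1^{\,k-1}0\bar\epsilon_{k+1}\epsilon_{k+2}\cdots$, which again has its first zero at position $k$. Comparing with the junction of $\De_i$, the loop label and the unordered pair of double-edge labels at $\eta_n$ agree with $b'$ and $\{c',d'\}$ exactly when $k\equiv i\bmod3$; this is the only point at which the residue enters, and it selects the correct $\De_i$. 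Accordingly I would set $\eta_n\mapsto(\xi_0,0)$ and $\eta_n'\mapsto(\xi_0,1)$ and then extend the map along the two edges labelled $a$.

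The heart of the argument is the claim that, for $k$ large, the two rays issuing from $a(\eta_n)$ and $a(\eta_n')$ reproduce the $\xi_0$-ray out to radius $r$. I would prove by induction on graph distance that every vertex of $\oB_{F(\eta_n)}(\eta_n,r)$ on the first ray agrees with the corresponding word $v_m$ of the $\xi_0$-ray in its first $k-1$ letters. Indeed each step is either the flip of position $1$ (an edge labelled $a$) or the flip of the position following the first zero (a double edge); as long as that first zero stays below $k-1$ these are identical flips performed on identical prefixes, so the walk never reaches the tail defect of $\eta_n$ at positions $\ge k$. Since the action at a vertex is governed solely by its first-zero position and the ensuing letter—both lying within the first $k-1$ letters—the loops, the double edges, and all adjacencies match those of $\De_i$; and the two rays remain disjoint from one another and from the junction because their vertices keep opposite letters in position $k+1$. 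Letting $M(r)$ be the largest position at which a zero occurs among the finitely many $\xi_0$-ray vertices inside $\oB_{\De^*_i}((\xi_0,0),r)$, the resulting correspondence is a marked-graph isomorphism as soon as $k>M(r)+1$.

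The principal obstacle is exactly this control of the defect: one must guarantee that a walk of length $r$ starting from a vertex whose first zero sits far out, at position $k$, never pushes the active position all the way to $k$, since otherwise the finite tail $\tau$—which is what distinguishes $F(\eta_n)$ from the model $\De_i$ with its genuinely infinite rays—would corrupt the ball. The inductive prefix-agreement claim, together with the finiteness of $\oB_{\De^*_i}((\xi_0,0),r)$ that bounds $M(r)$, is what rules this out. To conclude, since $z_n\to\infty$ and $z_n\equiv i\bmod3$ for large $n$, for each fixed $r$ we eventually have both $z_n>M(r)+1$ and $z_n\equiv i$, whence $\oB_{F(\eta_n)}(\eta_n,r)$ and $\oB_{\De^*_i}((\xi_0,0),r)$ are isomorphic and $\de(F(\eta_n),\De^*_i)\le 2^{-r}$. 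As $r$ is arbitrary, $F(\eta_n)\to\De^*_i$.
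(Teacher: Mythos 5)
Your proposal is correct and takes essentially the same approach as the paper: both rest on the prefix-replacement correspondence (a vertex of $\De^*_i$ over $\xi$ on sheet $x$ corresponds to the word agreeing with $\xi$ in the first $z_n-1$ letters of $\eta_n$, with the letter at position $z_n+1$ shifted by $x$), checked to be an isomorphism of balls around the marked vertices, with the residue $z_n\bmod 3$ entering only through the loop label at the junction. The sole difference is bookkeeping: the paper asserts the explicit validity radius $N\le 2^{z_n-2}$ for its global map $f_n$ and leaves the verification as ``one can check,'' while you fix a radius $r$, use finiteness of $\oB_{\De^*_i}((\xi_0,0),r)$ to get the threshold $M(r)$, and carry out the inductive verification that the walk never touches positions $\ge z_n$ --- a softer but equally valid way to control the defect.
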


\begin{proof}
For any $n\ge1$ we define a map $f_n:O_\be(\xi_0)\times\{0,1\}\to
O_\be(\eta_n)$ as follows.  Given $\xi\in O_\be(\xi_0)$ and $x\in\{0,1\}$,
let $f_n(\xi,x)$ be an infinite word obtained from $\eta_n$ after replacing
the first $z_n-1$ letters by the first $z_n-1$ letters of $\xi$ and adding
$x\bmod2$ to the $(z_n+1)$-th letter.  Clearly, $f_n(\xi_0,0)=\eta_n$.  Let
$i_n$ be the remainder of $z_n$ under division by $3$.  One can check that
the restriction of $f_n$ to the vertex set of the closed ball
$\oB_{\De^*_{i_n}}((\xi_0,0),N)$ is an isomorphism of this ball with the
closed ball $\oB_{F(\eta_n)}(\eta_n,N)$ whenever $N\le 2^{z_n-2}$.
Therefore $\de(F(\eta_n),\De^*_{i_n})\to0$ as $n\to\infty$.
\end{proof}

One consequence of Lemma \ref{grig8} is that the graphs $\De_0$, $\De_1$,
and $\De_2$ are Schreier graphs of the group $\cG$.  By construction, each
of these graphs admits a nontrivial automorphism, which interchanges
vertices corresponding to the same vertex of $\Ga_{\Sch}(\cG,S;\be,\xi_0)$.
This property distinguishes $\De_0$, $\De_1$, and $\De_2$ from the Schreier
graphs of orbits of the action $\be$.

\begin{lemma}\label{grig9}
The Schreier graphs $\Ga_{\Sch}(\cG,S;\be,\xi)$, $\xi\in\dT$ do not admit
nontrivial automorphisms.  The graphs $\De_0$, $\De_1$, and $\De_2$ admit
only one nontrivial automorphism.
\end{lemma}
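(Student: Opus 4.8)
The plan is to treat the two assertions by quite different routes: the first is an immediate consequence of the injectivity of stabilizers already available to us, while the second needs one genuinely combinatorial input, the non-periodicity of the linear graphs.

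For the first assertion, fix $\xi\in\dT$ and let $\psi$ be an automorphism of the unmarked graph $\Ga_{\Sch}(\cG,S;\be,\xi)$. This graph is the Schreier graph of the (transitive) restriction of $\be$ to the orbit $O_\be(\xi)$, and $\psi$ is an isomorphism of that Schreier graph with itself. As observed in the proof of Proposition \ref{sch4}, every isomorphism of Schreier graphs is a conjugacy of the underlying actions and therefore preserves the stabilizers of vertices; hence $\St_\be(\psi(v))=\St_\be(v)$ for every vertex $v\in O_\be(\xi)$. On the other hand, Lemma \ref{grig4} shows that $\St_\be$ is injective on $\dT$: if two distinct points had equal stabilizers, then the neighborhood stabilizer of one would be contained in the stabilizer of the other, which Lemma \ref{grig4} forbids. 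Consequently $\psi(v)=v$ for all $v$, so $\psi$ is trivial. This disposes of every orbit at once, including $O_\be(\xi_0)$.

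For the second assertion, recall that $\De_i$ is built from two copies of the Schreier graph of $O_\be(\xi_0)$ by deleting two loops at the endpoint $\xi_0$ of each copy and joining the copies by the two corresponding edges; by Lemma \ref{grig8} it is a genuine Schreier graph of $\cG$. As an unlabeled graph $\De_i$ is a bi-infinite line: each copy of $\Ga_{\Sch}(\cG,S;\be,\xi_0)$ is a ray with $\xi_0$ as its only endpoint, and the two rays are now attached to each other at those endpoints. The sheet-exchange $\sigma(\xi,j)=(\xi,1-j)$ respects all edges, labels and loops and has no fixed vertex, so it is a nontrivial automorphism.

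It remains to show that $\sigma$ is the only one. Any automorphism of $\De_i$ induces an automorphism of the underlying path graph, so the automorphism group of $\De_i$ embeds into the infinite dihedral group. A subgroup of the infinite dihedral group that contains the reflection $\sigma$ either equals $\{\mathrm{id},\sigma\}$ or contains a nontrivial translation, and in the latter case the labeled line $\De_i$ would be periodic. But the half of $\De_i$ lying on one side of the central edge is exactly the ray $\Ga_{\Sch}(\cG,S;\be,\xi_0)$, along which the label of the loop at a vertex records the residue modulo $3$ of the position of the first zero of the corresponding word; by the self-similar recursion governing $\cG$ (the action of $a$ on the first letter together with the description of $b,c,d$ through the first zero) these positions form a ruler-type sequence that is not eventually periodic, so neither is the loop sequence, and hence $\De_i$ is not periodic. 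Therefore the automorphism group of $\De_i$ is $\{\mathrm{id},\sigma\}$, i.e.\ $\De_i$ has exactly one nontrivial automorphism. Establishing this non-periodicity is the one step where real work is required; the main obstacle is precisely to extract it cleanly from the wreath recursions, and everything else is formal.
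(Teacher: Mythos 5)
Your treatment of the first assertion is correct and is in substance the paper's own argument: an automorphism of $\Ga_{\Sch}(\cG,S;\be,\xi)$ commutes with the action, hence preserves stabilizers of vertices, and Lemma \ref{grig4} (equivalently, Proposition \ref{sch4} plus Lemma \ref{grig4}) forces two vertices with equal stabilizers to coincide. For the second assertion your reduction is also sound: $\De_i$ is a labeled bi-infinite line, so $\mathrm{Aut}(\De_i)$ embeds in the infinite dihedral group, and any subgroup strictly larger than $\{\mathrm{id},\sigma\}$ contains a nontrivial translation, which would make the labeled line periodic. The gap is that the non-periodicity you then invoke is never proved. You assert that the first-zero positions of the words along the line form a ``ruler-type sequence that is not eventually periodic'' and appeal to the wreath recursions, but a proof would require two concrete pieces neither of which you supply: (a) a description of how the position of the first zero changes as one steps from a vertex of the line to its successor (this is the odometer/Gray-code structure of the orbit of $\xi_0$, and it is not established anywhere in your argument), and (b) a proof that the reduction modulo $3$ of that unbounded sequence is not eventually periodic. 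You explicitly concede this is ``the one step where real work is required,'' which is to say the crux of the second assertion is asserted rather than proven.

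The paper closes exactly this gap by a label-free device that you may want to adopt: torsion. Given a translation automorphism $f(v_j)=v_{n+j}$ with $n\ne0$, choose a path from $v_0$ to $v_n$ and let $w$ be its code word. Translating by powers of $f$ shows that for every $m>0$ the path beginning at $v_0$ with code word $w^m$ ends at $v_{mn}$, hence is never closed. But every element of the Grigorchuk group has finite order, so for some $m>0$ the word $w^m$ (reversed) represents $\1$ in $\cG$, and then Proposition \ref{sch1}(iii) forces that path to be closed --- a contradiction. This requires no analysis of the loop-label sequence at all. If instead you wish to keep your combinatorial route, you must actually prove (a) and (b) above; (b) alone is a short argument (e.g.\ comparing $\nu_2(2^k)$ and $\nu_2(2^k+p)$ for a putative period $p$), but (a) is a nontrivial structural statement about the Schreier graph that cannot be waved through.
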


\begin{proof}
It follows from Proposition \ref{sch4} and Lemma \ref{grig4} that marked
Schreier graphs $F(\xi)$ and $F(\eta)$ are isomorphic only if $\xi=\eta$.
Therefore the Schreier graphs $\Ga_{\Sch}(\cG,S;\be,\xi)$, $\xi\in\dT$
admit no nontrivial automorphisms.

The graphs $\De_0$, $\De_1$, and $\De_2$ have linear structure.  Namely,
one can label their vertices by $v_j$, $j\in\bZ$ so that each $v_j$ is
adjacent only to $v_{j-1}$ and $v_{j+1}$.  If $f$ is an automorphism of
such a graph, then either $f(v_j)=v_{n-j}$ for some $n\in\bZ$ and all
$j\in\bZ$ or $f(v_j)=v_{n+j}$ for some $n\in\bZ$ and all $j\in\bZ$.  Assume
that some $\De_i$ has more than one nontrivial automorphism.  Then we can
choose $f$ above so that the latter option holds with $n\ne0$.  Take any
path in $\De_i$ that begins at $v_0$ and ends at $v_n$ and let $w$ be the
code word of that path.  Since $f^m(v_0)=v_{mn}$ and $f^m(v_n)=v_{(m+1)n}$
for any integer $m$, the path in $\De_i$ with beginning $v_{mn}$ and code
word $w$ ends at $v_{(m+1)n}$.  It follows that for any integer $m>0$ the
path with beginning $v_0$ and code word $w^m$ ends at $v_{mn}$.  In
particular, this path is not closed.  However every element of the
Grigorchuk group $\cG$ is of finite order (see \cite{G1}) so that for some
$m>0$ the reversed word $w^m$ equals $\1$ when regarded as a product in
$\cG$.  This conradicts with Proposition \ref{sch1}.  Thus the graph
$\De_i$ admits only one nontrivial automorphism.
\end{proof}

\begin{figure}[t]
\centerline{\includegraphics{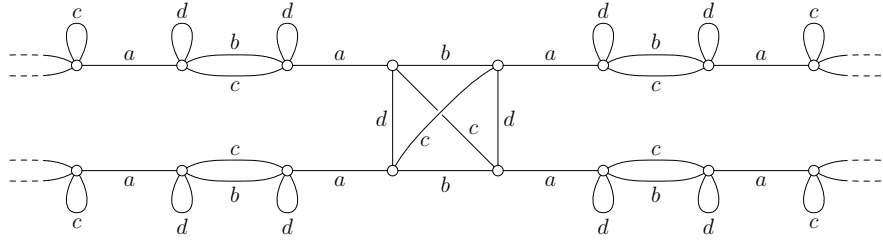}}
\caption{The Schreier coset graph of $\St^o_\be(\xi_0)$.}
\label{fig4}
\end{figure}

\begin{lemma}\label{grig10}
The Schreier graph $\Ga_{\Sch}(\cG,S;\be,\xi_0)$ is a double quotient of
each of the graphs $\De_0$, $\De_1$, and $\De_2$.  On the other hand, each
of the graphs $\De_0$, $\De_1$, and $\De_2$ is a double quotient of the
Schreier coset graph $\Ga_{\coset}(\cG,S;\St^o_\be(\xi_0))$.
\end{lemma}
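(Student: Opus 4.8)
The plan is to treat the two assertions separately, realizing each as a quotient of coset graphs.

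\emph{First assertion.} Each $\De_i$ carries the involutive automorphism $\tau(\xi,x)=(\xi,1-x)$ interchanging the two copies of $\Ga_{\Sch}(\cG,S;\be,\xi_0)$; this is the unique nontrivial automorphism furnished by Lemma \ref{grig9}. I would check directly from the edge rule defining $\De_i$ that the quotient of $\De_i$ by $\langle\tau\rangle$ is precisely $\Ga_{\Sch}(\cG,S;\be,\xi_0)$: the projection $p(\xi,x)=\xi$ sends the edge $(\xi,x,s)$ to the edge with beginning $\xi$ and label $s$, and one verifies that its end is $s(\xi)$ in every case --- in the exceptional case $\xi=\xi_0$, $s\in\{c',d'\}$ the end $(\xi_0,1-x)$ projects to $\xi_0=s(\xi_0)$, since $b,c,d$ all fix $\xi_0$. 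As $\tau$ is fixed-point free on vertices, $p$ is exactly two-to-one, so $\Ga_{\Sch}(\cG,S;\be,\xi_0)$ is a double quotient of $\De_i$.

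\emph{Second assertion.} Since $\De_i$ is a Schreier graph (remark after Lemma \ref{grig8}), let $K_i$ denote the stabilizer of its marked vertex $(\xi_0,0)$; by Proposition \ref{sch4}, $\De^*_i$ is isomorphic to $\Ga^*_{\coset}(\cG,S;K_i)$. I would then invoke the elementary fact that for subgroups $H_1\subset H_2$ of $\cG$ the map $gH_1\mapsto gH_2$ is a label-preserving graph morphism of $\Ga_{\coset}(\cG,S;H_1)$ onto $\Ga_{\coset}(\cG,S;H_2)$ that is $[H_2:H_1]$-to-one. Thus the whole claim reduces to showing $\St^o_\be(\xi_0)\subset K_i$ with index exactly $2$.

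To locate $K_i$ I would sandwich it between the two stabilizers. The projection $p$ above intertwines the action carried by $\De_i$ with $\be$, so $K_i\subset\St_\be(p(\xi_0,0))=\St_\be(\xi_0)$. For the lower bound, choose $\eta_n\to\xi_0$ with first-zero position $z_n\equiv i\pmod 3$; by Lemma \ref{grig8} the graphs $F(\eta_n)$ converge to $\De^*_i$. Transporting this through the homeomorphism $f$ of Proposition \ref{sub5} (which, by Proposition \ref{sch4}, carries $\St_\be(\eta_n)$ to $F(\eta_n)$ and $K_i$ to $\De^*_i$) gives $\St_\be(\eta_n)\to K_i$; since $\eta_n\to\xi_0$, Lemma \ref{sub4}(iii) yields $\St^o_\be(\xi_0)\subset K_i\subset\St_\be(\xi_0)$. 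Finally I would show both inclusions are strict from the edge rule at the marked vertex: the retained-loop generator $b'$ fixes $(\xi_0,0)$, so $b'\in K_i$, whereas $b'\in\{b,c,d\}$ meets $\St^o_\be(\xi_0)$ trivially by Lemma \ref{grig6}; and each removed-loop generator $c'$ sends $(\xi_0,0)$ to $(\xi_0,1)$, so $c'\in\St_\be(\xi_0)\setminus K_i$. As $\St_\be(\xi_0)/\St^o_\be(\xi_0)$ is the Klein $4$-group (Lemma \ref{grig6}), the only remaining possibility is $[K_i:\St^o_\be(\xi_0)]=[\St_\be(\xi_0):K_i]=2$. Hence $\De_i\cong\Ga_{\coset}(\cG,S;K_i)$ is a double quotient of $\Ga_{\coset}(\cG,S;\St^o_\be(\xi_0))$.

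The routine edge-by-edge verifications are harmless; the step demanding real care is pinning $K_i$ \emph{strictly} between $\St^o_\be(\xi_0)$ and $\St_\be(\xi_0)$, that is, correctly matching the combinatorial ``copy-swap'' built into $\De_i$ with an order-$2$ subgroup of the Klein $4$-group quotient. The limit argument through Lemmas \ref{grig8} and \ref{sub4} together with Proposition \ref{sub5} is what makes this matching rigorous without recomputing $\St^o_\be(\xi_0)$ by hand.
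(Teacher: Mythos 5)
Your proposal is correct, but it takes a genuinely different route from the paper. The paper argues top-down: it draws the Schreier coset graph $\Ga_{\coset}(\cG,S;\St^o_\be(\xi_0))$ explicitly (Figure \ref{fig4}), identifies its automorphism group as the Klein $4$-group using Lemmas \ref{grig6} and \ref{grig9}, and then observes that the quotient by the full automorphism group is $\Ga_{\Sch}(\cG,S;\be,\xi_0)$ while the quotients by the three order-$2$ subgroups are $\De_0,\De_1,\De_2$; both double-quotient statements drop out simultaneously, and the construction also produces the graph $\De$ required for Theorem \ref{main1}(v). You instead work bottom-up: the first assertion is verified by hand via the copy-swapping involution and the $2$-to-$1$ projection $p$, and for the second you realize $\De_i$ as $\Ga_{\coset}(\cG,S;K_i)$ (Proposition \ref{sch4}) and pin $K_i$ strictly between $\St^o_\be(\xi_0)$ and $\St_\be(\xi_0)$ --- the sandwich coming from Lemma \ref{grig8} transported into $\Sub(\cG)$ by the homeomorphism of Proposition \ref{sub5} and then Lemma \ref{sub4}(iii), the strictness from the loop structure at the marked vertex together with $H\cap\St^o_\be(\xi_0)=\{\1\}$ from Lemma \ref{grig6}, and the index-$2$ conclusion from the subgroup lattice of the Klein $4$-group; the elementary covering fact for nested subgroups $H_1\subset H_2$ then finishes the argument. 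What the paper's route buys is brevity and a single explicit covering object, at the cost of leaning on the picture and on an automorphism-group computation that is itself only sketched. What your route buys is independence from any explicit description of the coset graph of $\St^o_\be(\xi_0)$: everything reduces to the general machinery of Sections \ref{sch}--\ref{sub} plus the Klein $4$-group lattice, and you additionally identify which order-$2$ subgroup corresponds to each $\De_i$, namely $K_i$ generated by $\St^o_\be(\xi_0)$ and the retained-loop generator $b'$. The price is length, and the two strictness checks you flag are indeed the load-bearing steps: omitting either one would allow $K_i$ to collapse to $\St^o_\be(\xi_0)$ or to all of $\St_\be(\xi_0)$, giving the wrong covering degree.
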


\begin{proof}
The Schreier coset graph of the subgroup $\St^o_\be(\xi_0)$ is shown in
Figure \ref{fig4}.  In view of Lemmas \ref{grig6} and \ref{grig9}, the
automorphism group of this graph is the Klein $4$-group.  The quotient of
the graph by the entire automorphism group is the Schreier graph of the
orbit of $\xi_0$.  The quotients by subgroups of order $2$ are the graphs
$\De_0$, $\De_1$, and $\De_2$.
\end{proof}

Now it remains to collect all parts in Theorems \ref{main1} and
\ref{main2}.

\begin{proofof}{Theorem \ref{main1}}
We are concerned with the mapping $F:\dT\to\Sch(\cG,S)$ given by $F(\xi)
=\Ga_{\Sch}^*(\cG,S;\be,\xi)$.  Let us also consider a mapping
$\psi:\dT\to\Sub(\cG)$ given by $\psi(\xi)=\St_\be(\xi)$ and a mapping
$f:\Sub(\cG)\to\Sch(\cG,S)$ given by $f(H)=\Ga^*_{\coset}(\cG,S;H)$.  By
Proposition \ref{sch4}, $F(\xi)=f(\psi(\xi))$ for all $\xi\in\dT$.  By
Proposition \ref{sub5}, $f$ is a homeomorphism.  Lemma \ref{grig4} implies
that the mapping $\psi$ is injective.  It is Borel measurable due to Lemma
\ref{sub4}.  Also, $\psi$ is continuous at a point $\xi\in\dT$ if and only
if $\St^o_\be(\xi)=\St_\be(\xi)$.  Lemmas \ref{grig5} and \ref{grig6} imply
that the latter condition fails only if the infinite word $\xi$ contains
only finitely many zeros.  According to Lemma \ref{grig2}, an equivalent
condition is that $\xi$ is in the orbit of $\xi_0=1^\infty$ under the
action $\be$.  Since the mapping $F$ is $f$ postcomposed with a
homeomorphism, it is also injective, Borel measurable, and continuous
everywhere except the orbit of $\xi_0$.

By Lemma \ref{grig7}, the graph $F(\xi_0)$ is an isolated point of the
image $F(\dT)$.  Since $F(g(\xi))=\cA_g(F(\xi))$ for any $\xi\in\dT$ and
$g\in\cG$ and since the action $\cA$ is continuous (see Proposition
\ref{sch2}), the graph $F(g(\xi_0))$ is an isolated point of $F(\dT)$ for
all $g\in\cG$.  On the other hand, if $\xi\in\dT$ is not in the orbit of
$\xi_0$, then the graph $F(\xi)$ is not an isolated point of $F(\dT)$ as
the mapping $F$ is injective and continuous at $\xi$.

It follows from Lemma \ref{grig9} that the image $F(\dT)$ and the orbits
$O_{\cA}(\De^*_i)$, $i\in\{0,1,2\}$ are disjoint sets.  Note that the orbit
$O_{\cA}(\De^*_i)$ consists of marked graphs obtained from the graph
$\De_i$ by marking an arbitrary vertex.  Lemma \ref{grig8} implies the
union of those $4$ sets is the closure of $F(\dT)$.

Finally, the statement (v) of Theorem \ref{main1} follows from Lemma
\ref{grig10}.
\end{proofof}

\begin{proofof}{Theorem \ref{main2}}
Lemma \ref{tree5} combined with Lemma \ref{grig4} implies that the action
of $\cG$ on the closure of $F(\dT)$ is a continuous extension of the action
$\be$.  The extension is one-to-one everywhere except for the orbit
$O_\be(\xi_0)$ where it is four-to-one.  Namely, for any $g\in\cG$ the
point $g(\xi_0)$ is covered by $4$ graphs $F(g(\xi_0))$, $\cA_g(\De^*_0)$,
$\cA_g(\De^*_1)$, and $\cA_g(\De^*_0)$.  According to Theorem \ref{main1},
the graph $F(g(\xi_0))$ is an isolated point of the closure of $F(\dT)$.
When we restrict our attention to the set $\Om$ of non-isolated points of
the closure, we still have a continuous extension of the action $\be$, but
it is three-to-one on the orbit $O_\be(\xi_0)$.

By Lemma \ref{grig2}, the group $\cG$ acts transitively on each level of
the binary rooted tree $\cT$.  Then Proposition \ref{tree1} implies that
the action $\be$ is minimal and uniquely ergodic, the only invariant Borel
probability measure being the uniform measure on $\dT$.
Since the action of $\cG$ on the set $\Om$ is a continuous extension of the
action $\be$ that is one-to-one except for a countable set and since this
action has no finite orbits, it follows that the action is minimal,
uniquely ergodic, and isomorphic to $\be$ as the action with an invariant
measure.
\end{proofof}

\bigskip

\begin{raggedright}
\sc
Department of Mathematics\\
Mailstop 3368\\
Texas A\&M University\\
College Station, TX 77843-3368\\[3mm]
\it E-mail: \ \tt yvorobet@math.tamu.edu
\end{raggedright}

\end{document}